\renewcommand{\le}{\leqslant}
\renewcommand{\ge}{\geqslant}
\newcommand{\ptl}{\partial}
\newcommand{\wdw}{\wedge \cdots \wedge}
\newcommand{\LMD}{\mathbf{\Lambda}}
\newcommand{\GT}{\mathbf{\tilde{G}}}
\newcommand{\rr}{{\mathbb{R}}}
\newcommand{\hh}{{\mathbb{H}}}
\newcommand{\nn}{{\mathbb{N}}}
\newcommand{\la}{\lambda}
\newcommand{\Om}{\Omega}
\newcommand{\eps}{\varepsilon}
\newcommand{\ga}{\gamma}
\newcommand{\Ga}{\Gamma}
\newcommand{\escpr}[1]{\langle#1\rangle}
\newcommand{\co}{\cos(\theta)}
\newcommand{\si}{\sin(\theta)}
\newcommand{\kk}{\kappa}
\newcommand{\mh}{\mathcal{H}}
\DeclareMathOperator{\divv}{div}
\newtheorem{theorem}{Theorem}[section]
\newtheorem{proposition}[theorem]{Proposition}
\newtheorem{lemma}[theorem]{Lemma}
\newtheorem{corollary}[theorem]{Corollary}
\theoremstyle{definition}
\newtheorem{remark}[theorem]{Remark}
\newtheorem{example}[theorem]{Example}
\newtheorem{definition}[theorem]{Definition} 
\theoremstyle{remark}
\numberwithin{equation}{section}
\begin{document}

\title{Variational formulas for submanifolds of~fixed~degree}

\author[G.~Citti]{Giovanna Citti}
\address{Dipartimento di Matematica, Piazza di Porta S. Donato 5, 401
26 Bologna, Italy}
\email{giovanna.citti@unibo.it}

\author[G.~Giovannardi]{Gianmarco Giovannardi}
\address{Departamento de Geometr\'{\i}a y Topolog\'{\i}a \& Research Unit MNat \\
Universidad de Granada \\ Granada \\ Spain}
\email{giovannardi@ugr.es}

\author[M.~Ritor\'e]{Manuel Ritor\'e} 
\address{Departamento de Geometr\'{\i}a y Topolog\'{\i}a \& Research Unit MNat \\
Universidad de Granada \\ Granada \\ Spain}
\email{ritore@ugr.es}

\date{\today}

\thanks{The authors have been supported by Horizon 2020 Project ref. 777822: GHAIA,  MEC-Feder grant MTM2017-84851-C2-1-P and PRIN 2015 ``Variational and perturbative aspects of nonlinear differential problems” (GC and GG)}
\subjclass[2000]{49Q05, 53C42, 53C17}
\keywords{sub-Riemannian manifolds; graded manifolds; degree of a submanifold; area of given degree; admissible variations; isolated submanifolds}

\thispagestyle{empty}
\bibliographystyle{abbrv} 

\begin{abstract}
We consider in this paper an area functional defined on  submanifolds of fixed degree immersed into a graded manifold equipped with a Riemannian metric. Since the expression of this  area depends on the degree, not all variations are admissible. It turns out that the associated variational vector fields must satisfy a system of partial differential equations of first order on the submanifold. Moreover,  given a vector field solution of this system, we provide a sufficient condition that guarantees the possibility of deforming the original submanifold by variations preserving its degree. As in the case of singular curves in sub-Riemannian geometry, there are examples of isolated surfaces that cannot be deformed in any direction. When the deformability condition holds we compute the Euler-Lagrange equations. The resulting mean curvature operator can be of third order. 
\end{abstract}

\maketitle

\thispagestyle{empty}

\tableofcontents

\section{Introduction}

The aim of this paper is to study the critical points of an area functional for submanifolds of given degree immersed in an equiregular graded manifold. This can be defined as the structure $(N,\mh^1,\ldots,\mh^s)$, where $N$ is a smooth manifold and $\mh^1\subset\mh^2\subset\cdots\subset\mh^s=TN$ is a flag of sub-bundles of the tangent bundle satisfying $[\mh^i,\mh^j]\subset \mh^{i+j}$ when $i,j\ge 1$ and $i+j\le s$, and $[\mh^i,\mh^j]\subset \mh^s$ when $i,j\ge 1$ and $i+j>s$. The considered area depends on the degree of the submanifold. The concept of pointwise degree for a submanifold $M$ immersed in a graded manifold was first introduced by Gromov in \cite{Gromov} as the homogeneous dimension of the tangent flag given by
\[
 T_p M \cap \mh_p^1 \subset \cdots \subset T_p M \cap \mh_p^s.
\]
The \emph{degree of a submanifold} $\deg(M)$ is the maximum of the pointwise degree among all points in $M$. An alternative way of defining the degree is the following: on an open neighborhood of a point $p \in N$ we can always consider a local basis $(X_1,\ldots,X_n)$ adapted to the filtration $(\mh^i)_{i=1,\ldots,s}$, so that each $X_j$ has a well defined degree. Following \cite{vittonemagnani} the degree of a simple $m$-vector $X_{j_1}\wedge \ldots \wedge X_{j_m}$ is the sum of the degree of the vector fields of the adapted basis appearing in the wedge product. Since we can write a $m$-vector tangent to $M$ with respect to the simple $m$-vectors of the adapted basis, the \emph{pointwise degree}  is given by the maximum of the degree of these simple $m$-vectors. 

We consider a Riemannian metric $g=\escpr{\cdot,\cdot}$ on $N$. 
For any $p\in N$, we get an orthogonal decomposition $T_pN=\mathcal{K}_p^1\oplus\ldots\oplus\mathcal{K}_p^s$. Then we apply to $g$ a dilation induced by the grading, which means that, for any $r>0$, we take the Riemannian metric $g_r$ 
making the subspaces $\mathcal{K}_p^i$ orthogonal and such that
\[
g_r|_{\mathcal{K}^i}=\frac{1}{r^{i-1}}\,g|_{\mathcal{K}^i} \; .
\]
 Whenever $\mh^1$ is a bracket generating distribution the structure $(N, g_r)$ converges in Gromov-Hausdorff sense to the sub-Riemannian structure $(N,\mh^1, g_{|\mh^1})$ as $r \to 0$. Therefore an immersed submanifold $M\subset N$ of degree $d$ has Riemannian area measure $A(M,g_r)$ with respect to the metric $g_r$. We define area measure $A_d$ of degree $d$ by
\begin{equation}
\label{intro:d area}
A_d(M):=\lim_{r\downarrow 0 }\ r^{(\deg(M)- \dim(M))/2} A(M,g_r)
\end{equation}
when this limit exists and it is finite. In \eqref{eq:projected_integral_formula_Ad} we stress that the area measure $A_d$ of degree $d$ is given by integral of the norm the $g$-orthogonal projection onto the subspace of $m$-forms of degree equal to $d$ of the orthonormal $m$-vector tangent to $M$. This area formula was provided in \cite{vittonemagnani, MTV} for $C^1$ submanifolds immersed in Carnot groups and in \cite{FSS07} for intrinsic regular submanifolds in the Heisenberg groups.

Given a submanifold $M\subset N$ of degree $d$ immersed into a graded manifold $(N,(\mh^i)_{i})$, we wish to compute the Euler-Lagrange equations for the area functional $A_d$. The problem has been intensively studied for hypersurfaces, and results appeared in \cite{GarofaloNh, DanielliGarofalo, CJHHF, ChengMalchiodi, DGN09,BASCV, HladPauls,HP13,HRR,RitoreRosales1,R09, Montefalcone, CS06}. For submanifolds of codimension greater than one in a sub-Riemannian structure only in the case of curves has been studied.  In particular it is well know that there exists minimizers of the length functional which are not solutions of the geodesic equation: these curves, discovered by Montgomery in \cite{Mont94a,Mont94b} are called abnormal geodesics. In this paper we recognize that a similar phenomenon can arise while studying the first variational of area for surfaces immersed in a graded structure: there are isolated surfaces which does not admit degree preserving variations. Consequently we focus on smooth submanifolds of fixed degree, and \emph{admissible} variations, which preserve it. The associated \emph{admissible vector fields}, $V= \frac{\partial \Gamma_t}{\partial t }\big|_{t=0}$ satisfies the system of partial differential equations of first order \eqref{System of PDEs for admissible} on $M$. So we are led to the central question of characterizing the admissible vector fields which are associated to an admissible variation. 

The analogous integrability problem for geodesics in sub-Riemannian manifolds and, more generally, for functionals whose domain of definition consists of integral curves of an exterior differential system,  was posed by E.~Cartan \cite{Cartan} and studied by P. Griffiths \cite{MR684663}, R. Bryant \cite{MR924805} and L. Hsu \cite{MR1189496}. These one-dimensional problems have been treated by considering a holonomy map \cite{MR1189496} whose surjectivity defines a \emph{regularity} condition implying that any vector field satisfying the system \eqref{System of PDEs for admissible} is integrable.
In higher dimensions, there does not seem to be an acceptable generalization of such an holonomy map. However, an analysis of Hsu's regularity condition led the authors to introduce a weaker condition named \emph{strong regularity} in \cite{1902.04015}. This condition can be generalized to higher dimensions and provides a sufficient condition to ensure the local integrability of any admissible vector field  on $M$, see Theorem~\ref{teor:locint}. 
Indeed, in this setting the 
admissibility system \eqref{System of PDEs for admissible} in coordinates is given by
\begin{equation}
\sum_{j=1}^m C_j(\bar{p}) E_j(F)(\bar{p})+ B(\bar{p})F(\bar{p})+ A(\bar{p})G(\bar{p})=0,
\label{intro:local system of PDEs3}
\end{equation}
where $C_j, B, A$ are matrices, $F$ are the vertical components of the admissible vector field, $G$ are  the horizontal control components and $\bar{p} \in M$.
Since the strong regularity tells us that  the matrix $A(\bar{p})$ has  full rank we can locally write explicitly a part of the controls in terms of the vertical components and the other part of the controls, then applying the Implicit Function Theorem we produce admissible variations.

In Remark \ref{rk:regularGP} we recognize that our definition of strongly regular immersion generalizes the notion introduced by \cite{Gromov} of regular horizontal immersions, that are submanifolds  immersed in the horizontal distribution such that the degree coincides with the topological dimension $m$.  In \cite{Gromov86}, see also \cite{Pansu16}, the author shows a deformability theorem for regular horizontal immersions  by means of Nash's Implicit Function Theorem \cite{N56}. Our result is in the same spirit but for immersions of general degree.

 For strongly regular submanifolds it is possible to compute the Euler-Lagrange equations to obtain a sufficient condition for stationary points of the area $A_d$ of degree $d$. This naturally leads to a notion of mean curvature, which is not in general a second order differential operator, but can be of order three. This behavior doesn't show up in the one-dimensional case where the geodesic equations for regular curves have order less than or equal to two, see  \cite[Theorem 7.2]{1902.04015} or \cite[Theorem 10]{MR1189496}.

These tools can be applied to mathematical model of perception in the visual cortex: G. Citti and A. Sarti in \cite{CS06} showed that 2 dimensional  minimal surfaces in the three-dimensional sub-Riemannian manifold  $SE(2)$ play an important role in the completion process of images,  taking orientation into account. Adding curvature to the model,  a four dimensional Engel structure arises, 
see \S~1.5.1.4 in \cite{Petitot2014}, \cite{DobbinsZucker} and \S~\ref{sc:2jetspace} here.
The previous $2D$ surfaces, lifted  in this structure are codimension 2, degree four strongly regular surfaces in the sense of our definition. On the other hand we are able to show that there are isolated surfaces which do not admit degree preserving variations. Indeed, in Example~\ref{ex:isolated} we exhibit an isolated plane, immersed in the Engel group, whose only admissible normal vector field is the trivial one. Moreover, in analogy with the one-dimensional result by \cite{MR1240644}, Proposition~\ref{prop:rigid} shows that  this isolated plane is rigid in the $C^1$ topology, thus this plane is a local minimum for the area functional. Therefore we recognized that a similar phenomenon to the one of existence of abnormal curves can arise in higher dimension. Finally we conjecture that a bounded open set $\Omega$ contained in this isolated plane is a global minimum among all possible  immersed surfaces sharing the same boundary $\partial \Omega$. 

We have organized this paper into several sections. In the next one notation and basic concepts, such as graded manifolds, Carnot manifolds and degree of submanifolds, are introduced. In Section~\ref{sec:area} we define the area of degree $d$ for submanifolds of degree $d$ immersed in a graded manifold $(N,\mh^i)$ endowed with a Riemannian metric. This is done as a limit of Riemannian areas. In addition, an integral formula for this area in terms of a density is given in formula \eqref{eq:integral_formula_Ad}. Section~\ref{sc:examples} is devoted to provide examples of submanifolds of certain degrees and the associated area functionals. In Sections~\ref{sc:admissible} and \ref{sc:structure} we introduce the notions of admissible variations, admissible vector fields and integrable vector fields and we study the system of first order partial differential equations defining the admissibility of a vector field. In particular, we show the independence of the admissibility condition for vector fields of the Riemannian metric in \S~\ref{ssc:independence}. In Section~\ref{sc:integrability} we give the notion of a strongly regular submanifold of degree $d$, see Definition~\ref{def:regular}. Then we prove in Theorem~\ref{teor:locint} that the strong regularity condition implies that any admissible vector vector is integrable. In addition, we exhibit in Example~\ref{ex:isolated} an isolated plane whose only admissible normal vector field is the trivial one. Finally in Section~\ref{sc:first} we compute the Euler-Lagrange equations of a strongly regular submanifold and give some examples.

\section{Preliminaries}

Let $N$ be an $n$-dimensional smooth manifold. Given two smooth vector fields $X,Y$ on $N$, their \emph{commutator} or \emph{Lie bracket} is defined by $[X,Y]:=XY-YX$.  An \emph{increasing filtration} $(\mh^i)_{i\in \nn}$ of  the tangent bundle $TN$ is a flag of sub-bundles
\begin{equation}
\mathcal{H}^1\subset\mathcal{H}^2\subset\cdots\subset \mathcal{H}^i\subset\cdots\subseteq TN,
\label{manifold flag}
\end{equation}
such that
\begin{enumerate}
\label{def:incfilt}
\item[(i)] $ \cup_{i \in \nn} \mh^i= TN$ 
\item[(ii)]
 $ [\mathcal{H}^{i},\mathcal{H}^{j}] \subseteq \mathcal{H}^{i+j},$ for $ i,j \ge1$, 
\end{enumerate}
where  
$ [\mathcal{H}^i,\mathcal{H}^j]:=\{[X,Y]  : X \in \mathcal{H}^i,Y \in \mathcal{H}^j\}$.
Moreover, we say that an increasing filtration is \emph{locally finite} when
\begin{enumerate}
\item[(iii)]  for each $p \in N$ there exists an integer  $s=s(p)$, the \emph{step} at $p$, satisfying $\mathcal{H}^s_p=T_p N$. Then  we have the following flag of subspaces
\begin{equation}
 \mathcal{H}^1_p\subset\mathcal{H}^2_p\subset\cdots\subset \mathcal{H}^s_p=T_p N.
 \label{flag at each point}
\end{equation}
\end{enumerate}

A \textit{graded manifold} $(N,(\mh^i))$ is a smooth manifold $N$ endowed
with a locally finite increasing filtration, namely  a flag of sub-bundles \eqref{manifold flag} satisfying (i),(ii) and (iii). For the sake of brevity a locally finite increasing filtration will be simply called a filtration.
Setting $n_i(p):=\dim {\mathcal{H}}^i_p $, the integer list $(n_1(p),\cdots,n_s(p))$  is called the \textit{growth vector} of the filtration \eqref{manifold flag} at $p$. When the growth vector is constant in a neighborhood of a point $p \in N$ we say that $p$ is a \textit{regular point} for the filtration. We say that a filtration $(\mathcal{H}^i)$ on a manifold $N$ is \textit{equiregular} if the growth vector is constant in $N$. From now on we suppose that $N$ is an equiregular graded manifold.

Given a vector $v$ in $T_p N$ we say that the \textit{degree} of $v$ is equal to $\ell$ if $v\in\mathcal{H}_p^\ell$ and $v \notin \mathcal{H}_p^{\ell-1}$. In this case we write $\text{deg}(v)=\ell$. The degree of a vector field is defined pointwise and can take different values at different points.

Let $(N,(\mh^1,\ldots, \mh^s))$ be an equiregular graded manifold. Take $p\in N$ and consider an open neighborhood $U$ of $p$  where a local frame $\{X_{1},\cdots,X_{n_1}\}$  generating  $\mathcal{H}^1$ is defined. Clearly the degree of $X_j$, for $j=1,\ldots,n_1$, is equal to one since the vector fields $X_1,\ldots,X_{n_1}$ belong to $\mh^1$. Moreover the vector fields $X_1, \ldots,X_{n_1}$ also lie in $\mh^2$, we add some vector fields $X_{n_{1}+1},\cdots,X_{n_2} \in \mathcal{H}^2\setminus \mathcal{H}^{1} $ so that $(X_1)_p,\ldots,(X_{n_2})_p$ generate $\mathcal{H}^2_p$. Reducing $U$ if necessary we have that $X_1,\ldots,X_{n_2}$ generate $\mathcal{H}^2$ in $U$.  Iterating this procedure we obtain a basis of $TM$ in a neighborhood of $p$
\begin{equation}
\label{local adapted basis to the bundle}
 (X_1,\ldots,X_{n_1},X_{n_1+1},\ldots,X_{n_2},\ldots,X_{n_{s-1}+1}, \ldots,X_n),
\end{equation}
such that the vector fields $X_{n_{i-1}+1},\ldots,X_{n_i}$ have degree equal to $i$, where $n_0:=0$. The basis obtained in (\ref{local adapted basis to the bundle}) is called an \textit{adapted basis} to the filtration $(\mh^1,\ldots,\mh^s)$. 

Given an adapted basis $(X_i)_{1\le i\le n}$, the \emph{degree} of the \emph{simple} $m$-vector field $X_{j_1}\wedge\ldots\wedge X_{j_m}$ is defined by
\[
\deg(X_{j_1}\wedge\ldots\wedge X_{j_m}):=\sum_{i=1}^m\deg(X_{j_i}).
\]
Any $m$-vector $X$ can be expressed as a sum
\[
X_p=\sum_{J}\lambda_J(p)(X_J)_p,
\]
where $J=(j_1,\ldots,j_m)$, $1\le j_1<\cdots <j_m\le n$, is an ordered multi-index, and $X_J:=X_{j_1}\wedge\ldots\wedge X_{j_m}$. The degree of $X$ at $p$ with respect to the adapted basis $(X_i)_{1\le i\le n}$ is defined by
\[
\max\{\deg((X_J)_p):\la_J(p)\neq 0\}.
\]
It can be easily checked that the degree of $X$ is independent of the choice of the adapted basis and it is denoted by $\deg(X)$.

If $X=\sum_J\la_J X_J$ is an $m$-vector expressed as a linear combination of simple $m$-vectors $X_J$, its projection onto the subset of $m$-vectors of degree $d$ is given by
\begin{equation}
\label{projection onto degree d}
(X)_d=\sum_{\deg(X_J)=d} \la_JX_J,
\end{equation}
and its projection over the subset of $m$-vectors of degree larger than $d$ by
\[
\pi_d(X)=\sum_{\deg(X_J)\ge d+1} \la_JX_J.
\]

In an equiregular graded manifold with a local adapted basis $(X_1, \ldots,X_n)$, defined as in (\ref{local adapted basis to the bundle}), the maximal degree that can be achieved by an $m$-vector, $m\le n$,  is the integer $d_{\max}^m$ defined by 
\begin{equation}
\label{maximum degree over N}
 d_{\max}^m:=\deg(X_{n-m+1})+\cdots+\deg(X_{n}).
\end{equation}

\subsection{Degree of a submanifold}
Let $M$ be a submanifold of class $C^1$ immersed in an equiregular graded manifold $(N,(\mh^1,\ldots, \mh^s))$ such that $\dim(M)=m<n=\dim(N)$. Then, following \cite{DonneMagnani, vittonemagnani}, we define the degree of $M$ at a point $p\in M$ by 
\[
\deg_M(p):=\deg(v_1\wedge\ldots\wedge v_m),
\]
where $v_1,\ldots,v_m$ is a basis of $T_pM$. Obviously, the degree is independent of the choice of the basis of $T_pM$. Indeed, if we consider another basis $\mathcal{B'}=(v_1', \cdots, v_m')$ of  $T_p M$, we get 
$$v_1 \wedge \cdots \wedge v_m= \det(M_{\mathcal{B},\mathcal{B'}}) \ v_1' \wedge \cdots \wedge v_m',$$
where $M_{\mathcal{B},\mathcal{B'}}$ denotes the change of basis matrix. Since $\det(M_{\mathcal{B},\mathcal{B'}})\ne 0$, we conclude that $\deg_M(p)$ is well-defined.
The \textit{degree} $\deg(M)$ of a submanifold $M$ is the integer 
\[
\deg(M):=\max_{p\in M} \deg_{M}(p).
\]
We define the \textit{singular set} of a submanifold $M$ by 
\begin{equation}
\label{singular set}
 M_0=\{p \in M : \deg_M(p)<\deg(M) \}.
\end{equation}
Singular points can have different degrees between $m$ and $\deg(M)-1$.

In \cite[0.6.B]{Gromov} Gromov considers the flag 
\begin{equation}
\label{tangent flag of M}
 \tilde{\mathcal{H}}_p^1 \subset \tilde{\mathcal{H}}_p^2 \subset\cdots\subset \tilde{\mathcal{H}}_p^s =T_pM,
\end{equation}
where $\tilde{\mathcal{H}}_p^j=T_pM \cap \mathcal{H}_p^j $ and 
$\tilde{m}_j=\text{dim}(\tilde{\mathcal{H}}_p^j)$. Then he defines the degree at $p$ by
\begin{equation*}
 \tilde{D}_H(p)= \sum_{j=1}^s j (\tilde{m}_j- \tilde{m}_{j-1}),
\end{equation*}
setting $\tilde{m}_{0}=0$.
It is easy to check that our definition of degree is equivalent to Gromov's one, see \cite[Chapter 2.2]{thesis}. As we already pointed out, $(M,(\tilde{\mh}^j)_{j \in \nn})$ is a graded manifold.

Let us check now that the degree of a vector field and the degree of points in a submanifold are lower semicontinuous functions.

\begin{lemma}
\label{vectors semicont degree}
Let $(N,(\mh^1,\ldots, \mh^s))$ be a graded manifold regular at $p\in N$. Let $V$ be a vector field defined on a open neighborhood $U_1$ of $p$. Then we have
 \[
 \liminf\limits_{q\rightarrow p}\deg(V_q)\geqslant \deg(V_p).
\]
\end{lemma}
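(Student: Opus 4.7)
The plan is to reduce the statement to continuity of the coordinates of $V$ in an adapted frame. Since $p$ is a regular point of the filtration, the growth vector $(n_1,\ldots,n_s)$ is constant on some open $U\subseteq U_1$ containing $p$. On $U$ I construct an adapted basis $(X_1,\ldots,X_n)$ of $TN$ as in \eqref{local adapted basis to the bundle}; each $X_i$ carries a degree $d_i:=\deg(X_i)$ that does not depend on the point of $U$. Writing $V=\sum_{i=1}^n f_i X_i$ with continuous coefficients $f_i\colon U\to\rr$, the statement becomes a question about the coordinate functions.

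The key step is to establish the pointwise identity
\[
\deg(V_q)=\max\{d_i:f_i(q)\neq 0\}, \qquad q\in U.
\]
The inequality ``$\le$'' is immediate from $X_i(q)\in\mathcal{H}^{d_i}_q$. For ``$\ge$'', set $D=\max\{d_i:f_i(q)\neq 0\}$ and suppose, for contradiction, that $V_q\in\mathcal{H}^{D-1}_q$. Splitting
\[
V_q = \sum_{d_i<D} f_i(q)X_i(q)+\sum_{d_i=D} f_i(q)X_i(q),
\]
the first sum already lies in $\mathcal{H}^{D-1}_q$, so the second one must as well. However, the construction of an adapted frame guarantees that $\{X_i(q):d_i=D\}$ descends to a basis of $\mathcal{H}^D_q/\mathcal{H}^{D-1}_q$, which forces $f_i(q)=0$ for every $i$ with $d_i=D$, contradicting the choice of $D$.

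With the identity in hand, let $\ell=\deg(V_p)$: there must exist an index $i_0$ with $d_{i_0}=\ell$ and $f_{i_0}(p)\neq 0$. By continuity of $f_{i_0}$ there is an open neighborhood $U'\subseteq U$ of $p$ on which $f_{i_0}$ does not vanish, so $\deg(V_q)\geq d_{i_0}=\ell$ for every $q\in U'$, and taking the liminf as $q\to p$ yields the claim. The only real subtlety is the identity for $\deg(V_q)$, which relies on both the constancy of the growth vector near $p$ (so that the adapted frame is available in a whole neighborhood) and the splitting $\mathcal{H}^D_q=\mathcal{H}^{D-1}_q\oplus\langle X_i(q):d_i=D\rangle$ built into the notion of an adapted basis; once this is in place the lower semicontinuity is a one-line consequence of the continuity of a single coordinate function.
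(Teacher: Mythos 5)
Your proof is correct and follows essentially the same route as the paper's: express $V$ in a local adapted frame, observe that some coefficient of a vector field of degree $\deg(V_p)$ is nonzero at $p$, and use continuity of that coefficient to conclude. The only difference is that you explicitly justify the identity $\deg(V_q)=\max\{d_i:f_i(q)\neq 0\}$ via the quotient $\mathcal{H}^D_q/\mathcal{H}^{D-1}_q$, a step the paper leaves implicit in the definition of degree.
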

\begin{proof} 
As $p\in N$ is regular, there exists a local adapted basis $(X_1,\ldots,X_n)$ in an open neighborhood $U_2\subset U_1$ of $p$. We express the smooth vector field $V$ in $U_2$ as
 \begin{equation}
  V_q=\sum_{i=1}^{s} \sum_{j=n_{i-1}+1}^{n_{i}} c_{ij}(q) (X_j)_q
  \label{vector written in basis}
\end{equation}
on $U_2$  with respect to an  adapted basis $(X_1,\cdots, X_n)$, where $c_{ij}\in C^\infty(U_2)$.
Suppose that the degree $\deg(V_p)$ of $V$ at $p$ is equal to $d \in \nn$. Then, 
there exists an integer $k \in \{ n_{d-1}+1,\cdots,n_{d}\}$  such that
$c_{dk}(p)\ne 0$ and $c_{ij}(p)=0$ for all $i=d+1,\cdots,s$ and $j=n_{i-1}+1,\cdots,n_{i}$. 
By continuity, there exists an open neighborhood $U'\subset U_2$ such that $c_{dk}(q)\ne 0$
for each $q$ in $U'$. Therefore for each $q$ in $U'$ the degree of $V_q$ is greater than or equal to the degree of $V(p)$,
\[
 \deg(V_q)\geqslant \deg(V_p)=d.
\]
Taking limits we get
\[
\liminf\limits_{q\rightarrow p}\deg(V_q)\geqslant \deg(V_p). \qedhere
\]
\end{proof}
\begin{remark}
In the proof of Lemma~\ref{vectors semicont degree}, $\deg(V_q)$ could be strictly greater than $d$ in case there were a coefficient $c_{ij}$ with $i\ge d+1$ satisfying $c_{ij}(q)\neq 0$. 
\end{remark}

\begin{proposition}
\label{semicont of the degree}
 Let $M$ be a $C^1$ immersed submanifold  in a graded manifold $(N,(\mh^1,\ldots, \mh^s))$. Assume that $N$ is regular at $p\in M$. Then we have 
 \[
  \liminf\limits_{q\rightarrow p,  q \in M}\deg_M(q)\geqslant \deg_M(p).
 \]
\end{proposition}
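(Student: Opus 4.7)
The plan is to mimic the proof of Lemma~\ref{vectors semicont degree}, working with a continuously varying $m$-vector tangent to $M$ rather than with a single vector field.

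First, since $M$ is a $C^1$ immersed submanifold and $N$ is regular at $p$, I would choose on a small open neighborhood $U$ of $p$ in $M$ a continuous local frame $(v_1(q),\ldots,v_m(q))$ of $TM$, and on a neighborhood of $p$ in $N$ (which I can assume contains the image of $U$) a local adapted basis $(X_1,\ldots,X_n)$ as in \eqref{local adapted basis to the bundle}. Expanding each $v_i(q)$ in the adapted basis gives continuous coefficients, and consequently
\[
v_1(q)\wedge\cdots\wedge v_m(q)=\sum_{J}\lambda_J(q)\,(X_J)_q,
\]
with $J=(j_1,\ldots,j_m)$ an ordered multi-index and $\lambda_J\in C^0(U)$. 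By the definition of degree on a submanifold, $\deg_M(q)$ equals $\max\{\deg(X_J):\lambda_J(q)\neq 0\}$.

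Second, let $d:=\deg_M(p)$. By the definition of degree of an $m$-vector there exists a multi-index $J_0$ with $\deg(X_{J_0})=d$ and $\lambda_{J_0}(p)\neq 0$. By continuity of $\lambda_{J_0}$, there is an open neighborhood $U'\subset U$ of $p$ on which $\lambda_{J_0}(q)\neq 0$. Hence for every $q\in U'$,
\[
\deg_M(q)\ge \deg(X_{J_0})=d=\deg_M(p),
\]
and taking the limit inferior as $q\to p$ in $M$ yields the desired inequality.

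There is no real obstacle here: the only mildly delicate point is the existence of the continuous local tangent frame $(v_1,\ldots,v_m)$, which is immediate from the $C^1$ immersed hypothesis, and the remark following Lemma~\ref{vectors semicont degree} applies again, namely the inequality can be strict if some coefficient $\lambda_J$ with $\deg(X_J)>d$ becomes nonzero off $p$.
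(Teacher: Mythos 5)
Your proof is correct and follows essentially the same route as the paper's: expand a continuously varying tangent $m$-vector in the adapted basis, locate a multi-index $\bar J$ of degree $\deg_M(p)$ with nonvanishing coefficient at $p$, and use continuity of that coefficient to conclude the degree cannot drop nearby. The only difference is that you make explicit the existence of the continuous local tangent frame, which the paper's proof leaves implicit.
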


\begin{proof}
The proof imitates the one of Lemma \ref{vectors semicont degree} and it is
 based on the fact that the degree is defined by an open condition. Let
$\tau_M=\sum_J\tau_JX_J$ be a tangent $m$-vector in an open neighborhood $U$ of $p$, where a local adapted basis is defined. The functions $\tau_J$ are continuous on $U$. Suppose that the degree $\deg_M(p)$ at $p$ in $M$ is equal to $d$. This means that there exists a multi-index $\bar{J}$ such that $\tau_{\bar{J}}(p)\ne0$ and $\deg((X_{\bar{J}})_p)=d$. Since the function $\tau_{\bar{J}}$ is continuous there exists a neighborhood $U'\subset U$ such that $\tau_{\bar{J}}(q)\ne0$ in $U'$. Therefore, $\deg(\tau_M(q))\ge d$ and taking limits we have 
\[
\liminf\limits_{q\rightarrow p}\deg_M(q)\geqslant \deg_M(p). \qedhere
\]
\end{proof}

\begin{corollary}
\label{lsc cor}
Let $M$ be a $C^1$ submanifold immersed in an equiregular graded manifold. Then 
\begin{enumerate}
\item $\deg_M$ is a lower semicontinuous function on $M$.
\item The singular set $M_0$ defined in \eqref{singular set} is closed in $M$.
\end{enumerate} 
\end{corollary}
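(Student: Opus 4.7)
The plan is to derive both statements as direct consequences of Proposition~\ref{semicont of the degree}, using only the equiregularity hypothesis and the fact that the degree function takes integer values.

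For part (1), I would observe that by definition of equiregular graded manifold the growth vector is constant on $N$, so \emph{every} point of $N$ is a regular point for the filtration. Therefore the hypothesis of Proposition~\ref{semicont of the degree} is satisfied at every $p\in M$, and we conclude
\[
\liminf_{q\to p,\,q\in M}\deg_M(q)\ge\deg_M(p)\qquad\text{for all } p\in M,
\]
which is exactly the definition of lower semicontinuity of $\deg_M\colon M\to\mathbb{R}$.

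For part (2), I would use the standard fact that a function is lower semicontinuous if and only if all its sub-level sets of the form $\{f\le c\}$ are closed. Because $\deg_M$ takes values in the integers (it is a sum of integer degrees of vector fields in an adapted basis), and because $\deg(M)=\max_{p\in M}\deg_M(p)$ is itself an integer, we can rewrite
\[
M_0=\{p\in M:\deg_M(p)<\deg(M)\}=\{p\in M:\deg_M(p)\le \deg(M)-1\}.
\]
Applying the characterization above with $c=\deg(M)-1$ shows that $M_0$ is closed in $M$.

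The argument is essentially cosmetic; the content is entirely contained in Proposition~\ref{semicont of the degree}. There is no substantive obstacle, but one should record the characterization of lower semicontinuity through closed sub-level sets explicitly, and note that the jump from ``$<$'' to ``$\le$'' in the definition of $M_0$ is legitimate precisely because $\deg_M$ is integer-valued; this observation is what makes the singular set closed rather than merely an $F_\sigma$.
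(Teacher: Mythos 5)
Your proof is correct and follows essentially the same route as the paper: part (1) is exactly the paper's observation that equiregularity makes every point regular so that Proposition~\ref{semicont of the degree} applies, and part (2) is the same fact in different clothing --- the paper shows directly that $M\smallsetminus M_0$ is open via a neighborhood on which $\deg_M$ equals its maximum, while you invoke the closed-sublevel-set characterization of lower semicontinuity together with the integer-valuedness of $\deg_M$ to write $M_0=\{\deg_M\le \deg(M)-1\}$. Both arguments rest on the same two ingredients (the Proposition plus discreteness of the degree), so there is nothing to flag.
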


\begin{proof}
The first assertion follows from Proposition~\ref{semicont of the degree} since every  point in an equiregular graded  manifold is regular. To prove 2, we take $p\in M\smallsetminus M_0$. By 1, there exists a open neighborhood $U$ of $p$ in $M$ such that each point $q$ in $U$ has degree $\deg_M(q)$ equal to $\deg(M)$. Therefore we have $U\subset M\smallsetminus M_0$ and hence $M\smallsetminus M_0$ is an open set.  
\end{proof}
\subsection{Carnot manifolds}
Let $N$ be an $n$-dimensional smooth manifold. An \emph{$l$-dimensional distribution} $\mathcal{H}$ on $N$ assigns smoothly to every $p\in N$ an $l$-dimensional vector subspace $\mathcal{H}_p$ of $T_pN$. We say that a distribution $\mathcal{H}$ complies \emph{H\"ormander's condition} if  any local frame $\{X_1, \ldots, X_l\}$ spanning $\mathcal{H}$ satisfies
\[
\dim(\mathcal{L}(X_1,\ldots,X_{l}))(p)=n, \quad \text{for all} \ p\in N,
\]
where $\mathcal{L}(X_{1},\ldots,X_{l})$ is the linear span of the vector fields  $X_{1},\ldots,X_{l}$ and their  commutators of any order.

A \emph{Carnot manifold} $(N,\mathcal{H})$ is a smooth manifold $N$ endowed with an $l$-dimensional distribution $\mathcal{H}$ satisfying H\"ormander's condition. We refer to $\mathcal{H}$ as the \emph{horizontal distribution}. We say that a vector field on $N$ is \emph{horizontal} if it is tangent to the horizontal distribution at every point. A $C^1$ path is horizontal if the tangent vector is everywhere tangent to the horizontal distribution. A \emph{sub-Riemannian manifold} $(N,\mathcal{H},h)$ is a Carnot manifold $(N,\mathcal{H})$ endowed with a positive-definite inner product $h$ on $\mathcal{H}$. Such an inner product can always be extended to a Riemannian metric on $N$. Alternatively, any Riemannian metric on $N$ restricted to $\mathcal{H}$ provides a structure of sub-Riemannian manifold. Chow's Theorem assures that in a Carnot manifold $(N,\mathcal{H})$ the set of points that can be connected to a given point $p\in N$ by a horizontal path is the connected component of $N$ containing $p$, see \cite{Montgomery}.
Given a Carnot manifold $(N,\mathcal{H})$, we have a flag of subbundles
\begin{equation}
\mathcal{H}^1:=\mathcal{H}\subset\mathcal{H}^2\subset\cdots\subset \mathcal{H}^i\subset\cdots\subset TN,
\label{eq:c manifold flag}
\end{equation}
defined by 
\begin{equation*}
\mathcal{H}^{i+1} :=\mathcal{H}^i + [\mathcal{H},\mathcal{H}^i], \qquad i\ge 1,
\end{equation*}
where
\begin{equation*}
 [\mathcal{H},\mathcal{H}^i]:=\{[X,Y]  : X \in \mathcal{H},Y \in \mathcal{H}^i\}.
\end{equation*}
The smallest integer $s$ satisfying $\mathcal{H}^s_p=T_pN$ is called the \textit{step} of the distribution $\mathcal{H}$ at the point $p$. Therefore, we have 
\begin{equation*}
 \mathcal{H}_p\subset\mathcal{H}^2_p\subset\cdots\subset \mathcal{H}^s_p=T_p N.
\end{equation*}
 The integer list $(n_1(p),\cdots,n_s(p))$  is called the\textit{ growth vector} of $\mathcal{H}$ at $p$. When the growth vector is constant in a neighborhood of a point $p \in N$ we say that $p$ is a \textit{regular point} for the distribution. This flag of sub-bundles \eqref{eq:c manifold flag} associated to a Carnot manifold $(N,\mh)$ gives rise to the graded structure $(N,(\mh^i))$. Clearly an equiregular Carnot manifold $(N,\mh)$ of step $s$ is an equiregular graded manifold $(N,\mh^1, \ldots, \mh^s)$. In particular a Carnot group turns out to be an equiregular graded manifold.

Given a connected sub-Riemannian manifold $(N,\mathcal{H},h)$, and a $C^1$ horizontal path $\gamma:[a,b]\to N$, we define the length of $\gamma$ by
\begin{equation}
L(\gamma)=\int_a^b \ \sqrt{h(\dot{\gamma}(t),\dot{\gamma}(t))} \ dt.
 \label{length fun}
\end{equation}
By means of the equality
\begin{equation}
 d_c(p,q):=\inf \{L(\gamma) :  \gamma\  \text{is a } C^1\ \text{horizontal path joining } p,q  \in N \},
 \label{C-C distance}
\end{equation}
this length defines a distance function (see \cite[\S~2.1.1,\S~2.1.2]{BuragoYuri}) usually called the \textit{Carnot-Carathéodory distance}, or CC-\emph{distance} for short. See \cite[Chapter 1.4]{Montgomery} for further details. 

\section{Area for submanifolds of given degree}
\label{sec:area}

In this section we shall consider a graded manifold $(N,\mh^1,\ldots,\mh^s)$ endowed with a Riemannian metric $g$, and an immersed submanifold $M$ of dimension $m$.

We recall the following construction from \cite[1.4.D]{Gromov}: given $p\in N$, we recursively define the subspaces $\mathcal{K}^1_p:=\mathcal{H}_p$, $\mathcal{K}^{i+1}_p:=(\mathcal{H}_p^i)^\perp\cap \mathcal{H}^{i+1}_p$, for $1\le i\le (s-1)$. Here $\perp$ means perpendicular with respect to the Riemannian metric $g$. Therefore we have the decomposition of $T_pN$ into orthogonal subspaces
\begin{equation}
\label{eq:decomp}
T_pN=\mathcal{K}_p^1\oplus \mathcal{K}_p^2\oplus\cdots \oplus\mathcal{K}_p^s.
\end{equation}
Given $r>0$, a unique Riemannian metric $g_r$ is defined under the conditions: (i) the subspaces $\mathcal{K}_i$ are orthogonal, and (ii)
\begin{equation}
\label{metric blow-up} 
g_r|_{\mathcal{K}_i}=\frac{1}{r^{i-1}}g|_{\mathcal{K}_i}, \qquad i=1,\ldots,s.
\end{equation}
When we consider Carnot manifolds, it is well-known that the Riemannian distances of $(N,g_r)$ uniformly converge to the Carnot-Carathéodory distance of $(N,\mathcal{H},h)$, \cite[p.~144]{Gromov}.

Working on a neighborhood $U$ of $p$ where a local frame $(X_1,\ldots,X_k)$  generating the distribution $\mh$ is defined, we construct  an \emph{orthonormal} adapted basis $(X_1,\ldots,X_n)$ for the Riemannian metric $g$ by choosing orthonormal bases in the orthogonal subspaces $\mathcal{K}^i$, $1\le i\le s$.
Thus, the $m$-vector fields
\begin{equation}
\label{eq:Xtilde}
 \tilde{X}_J^r=\left(r^{\frac{1}{2}(\deg(X_{j_1})-1)}X_{j_1}\right)
 \wedge \ldots \wedge \left(r^{\frac{1}{2}(\deg(X_{j_m})-1)}X_{j_m}\right),
\end{equation}
where $J = (j_1 , j_2 , \ldots , j_m )$ for $1 \leqslant j_1 < \cdots < j_m \leqslant n$, are orthonormal with respect to the extension of the metric $g_r$ to the space of $m$-vectors. We recall that the metric $g_{r}$ is extended to the space of $m$-vectors simply defining
\begin{equation}
g_r(v_1\wedge \ldots \wedge v_m , v_1'\wedge \ldots \wedge v_m')=\det\big(g_r(v_i,v_j')\big)_{1\le i,j\le m},
\label{Leibniz}
\end{equation}
for $v_1,\ldots,v_m$ and $v_1', \ldots,v_m'$ in $T_{p} N$. Observe that the extension is denoted the same way.

\subsection{Area for submanifolds of given degree}
Assume now that $M$ is an immersed submanifold of dimension $m$ in a equiregular graded manifold $(N,\mathcal{H}^1,\ldots, \mh^s)$ equipped with the Riemannian metric $g$. We take a Riemannian metric $\mu$ on $M$. For any $p\in M$ we pick a $\mu$-orthonormal basis $e_1,\ldots,e_m$ in $T_pM$. By the area formula we get
\begin{equation}
\label{eq:sraintegral}
A(M',g_r)
=\int_{{M'}} \left| e_1 \wedge \ldots \wedge e_m\right|_{g_r} d\mu(p),
\end{equation}
where $M'$ is a bounded measurable subset of $M$ and $A(M',g_r)$ is the $m$-dimensional area of $M'$ with respect to the Riemannian metric $g_r$.

Now we express
\[
e_1\wedge \ldots \wedge e_m=\sum_{J} \tau_{J}(p) {(X_{J})_p}=\sum_{J} \tilde{\tau}^r_{J}(p) (\tilde{X}_{J}^r)_p,\quad r>0.
\]
From \eqref{eq:Xtilde} we get $\tilde{X}_{J}^r=r^{\frac{1}{2}(\deg(X_J)-m)}{X_{J}}$, and so  $\tilde{\tau}_{J}=r^{-\frac{1}{2}(\deg(X_J)-m)}{\tau_{J}}$.
Moreover, as $\{\tilde{X}_{J}^r\}$ is an orthonormal basis for $g_r$, we have 
\[
\left| e_1 \wedge \ldots \wedge e_m\right|_{g_r}^2=\sum_J (\tilde{\tau}_{J}^r(p))^2=
\sum_J r^{-(\deg(X_J)-m)}{\tau_{J}^2(p)}.
\]
Therefore, we have 
\begin{align*}
 \lim_{r\downarrow 0} r^{\frac{1}{2}(\deg(M)-m)} \left| e_1 \wedge \ldots \wedge e_m\right|_{g_r}&=
 \lim_{r\downarrow0}  \Big(\sum_J r^{(\deg(M)-\deg(X_J))}{\tau_{J}^2(p)}\Big)^{1/2}
 \\
 &=\Big(\sum_{\deg(X_J)=\deg(M)} \tau_{J}^2(p)\Big)^{1/2}.
\end{align*}

By Lebesgue's dominated convergence theorem we obtain 
\begin{equation}
\label{eq:integral_formula_Ad}
\lim_{r\downarrow 0} \Big(r^{\tfrac{1}{2}(\deg(M)-m)}A(M',g_r)\Big)=\int_{{M'}} 
\Big(\sum_{\deg(X_J)=\deg(M)} \tau_{J}^2(p)\Big)^{\frac{1}{2}} \ d\mu(p).
\end{equation}

\begin{definition}
If $M$ is an immersed submanifold of degree $d$ in an  equiregular graded manifold $(N,\mathcal{H}^1,\ldots, \mh^s)$ endowed with a Riemannian metric $g$, the degree $d$ area $A_d$ is defined by
\[
A_d(M'):=\lim_{r\downarrow 0} \Big(r^{\tfrac{1}{2}(d-m)}A(M',g_r)\Big),
\]
for any bounded measurable set $M'\subset M$.
\end{definition}

Equation \eqref{eq:integral_formula_Ad} provides an integral formula for the area $A_d$. An immediate consequence of the definition is the following

\begin{remark}
 Setting $d:=\deg(M)$ we have by equation \eqref{eq:integral_formula_Ad} and the notation introduced in \eqref{projection onto degree d}  that the degree $d$ area $A_d$ is given by 
 \begin{equation}
 \label{eq:projected_integral_formula_Ad}
  A_d(M')=\int_{{M'}} 
 |\left( e_1\wedge \ldots \wedge e_m \right)_d|_{g} \ d\mu(p).
 \end{equation}
 for any bounded measurable set $M'\subset M$. When the ambient manifold is a Carnot group this area formula was obtained by \cite{vittonemagnani}. Notice that the $d$ area $A_d$ is given by the integral of the $m$-form
 \begin{equation}
  \label{eq:dmform}
  \omega_d(v_1,\ldots,v_m)(p)=\escpr{v_1\wedge \ldots \wedge v_m, \dfrac{(e_1\wedge \ldots \wedge e_m )_d}{|(e_1\wedge \ldots \wedge e_m)_d|_g} },
 \end{equation} 
 where $v_1,\ldots,v_m$ is a basis of $T_p M$. 

In a more general setting, an $m$-dimensional submanifold in a Riemannian manifold is an $m$-current (i.e., an element of the dual of the space of $m$-forms), and the area is the mass of this current (for more details see \cite{GMTFe}). Similarly, a natural generalization of an $m$-dimensional submanifold of degree $d$ immersed in a graded manifold  is an $m$-current of degree $d$ whose mass should be given by $A_d$. In \cite{FSS07} the authors studied the theory of $\mathbb{H}$-currents in the Heisenberg group. Their mass coincides with our area \eqref{eq:projected_integral_formula_Ad} on intrinsic $C^1$ submanifolds. However in \eqref{eq:dmform} we consider all possible $m$-forms and not only the intrinsic $m$-forms in the Rumin's complex \cite{Rumin94,Pansu93, BaldiFranchi12}.
\end{remark}

\begin{corollary}
Let $M$ be an $m$-dimensional immersed submanifold of degree $d$ in a graded manifold $(N,\mh^1,\ldots,\mh^s)$ endowed with a Riemannian metric $g$. Let $M_0\subset M$ be the closed set of singular points of $M$. Then $A_d(M_0)=0$.
\end{corollary}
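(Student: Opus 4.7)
The plan is to exploit the integral formula \eqref{eq:projected_integral_formula_Ad}, which represents $A_d$ as
\[
A_d(M') = \int_{M'} |(e_1 \wedge \ldots \wedge e_m)_d|_{g} \, d\mu(p),
\]
and then show that the integrand vanishes identically on the singular set $M_0$.

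First I would work locally: around a point $p \in M_0$, fix a local adapted basis $(X_1,\ldots,X_n)$ and expand the (Riemannian-orthonormal) $m$-vector tangent to $M$ as
\[
e_1 \wedge \ldots \wedge e_m = \sum_{J} \tau_J(q) (X_J)_q
\]
for $q$ in a neighborhood of $p$ in $M$. By the very definition of the pointwise degree $\deg_M(p)$ as $\max\{\deg((X_J)_p) : \tau_J(p) \neq 0\}$, the condition $p \in M_0$ means $\deg_M(p) < d = \deg(M)$, which forces $\tau_J(p) = 0$ for every multi-index $J$ with $\deg(X_J) = d$. Applying the projection $(\cdot)_d$ defined in \eqref{projection onto degree d} at the point $p$ therefore gives $(e_1 \wedge \ldots \wedge e_m)_d|_p = 0$, and hence $|(e_1 \wedge \ldots \wedge e_m)_d|_g = 0$ at every $p \in M_0$.

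Next I would handle the measurability/boundedness issue needed to invoke \eqref{eq:projected_integral_formula_Ad}. By Corollary~\ref{lsc cor}, $M_0$ is closed in $M$, hence measurable. Cover $M_0$ by countably many bounded measurable subsets $M_0 \cap K_i$ (for instance with $K_i$ a locally finite cover by relatively compact coordinate neighborhoods); on each such subset the integrand vanishes pointwise, so $A_d(M_0 \cap K_i) = 0$. Countable additivity of the measure $|(e_1\wedge\ldots\wedge e_m)_d|_g \, d\mu$ then yields $A_d(M_0) = 0$.

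There is no real obstacle here — the only subtlety is making sure the characterization of degree in terms of vanishing of the coefficients $\tau_J$ of the projection $(e_1 \wedge \ldots \wedge e_m)_d$ is invoked correctly, and that one does not accidentally try to integrate over a non-measurable or unbounded set. Both are immediate from Corollary~\ref{lsc cor} and the definition of $(\cdot)_d$.
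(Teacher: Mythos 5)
Your proof is correct and follows essentially the same route as the paper: both express $A_d(M_0)$ via the integral formula for the degree-$d$ area and observe that the coefficients $\tau_J$ with $\deg(X_J)=d$ vanish at every point of $M_0$ since $\deg_M(p)<d$ there. The extra remarks on measurability and exhaustion by bounded sets are harmless refinements of what the paper states in one line.
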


\begin{proof}
Take an orthonormal basis $v_1,\ldots,v_m$ of $M$ at $p$ and express $v_1\wedge\ldots\wedge v_m=\sum_{J} \tau_J(p)(X_J)_p$. When $p$ is a singular point, $\deg(v_1\wedge\ldots\wedge v_m)<\deg(M)=d$ and so $\tau_J(p)=0$ whenever $\deg(X_J)\ge d$.

Since $M_0$ is measurable, from \eqref{eq:integral_formula_Ad} we obtain
\[
A_d(M_0)=\int_{M_0}
\Big(\sum_{\deg(X_J)=d} \tau_{J}^2(p)\Big)^{\frac{1}{2}} \ d\mu(p)
\]
and so $A_d(M_0)=0$.
\end{proof}

\begin{remark}
Another easy consequence of the definition is the following: if $M$ is an immersed submanifold of degree $d$ in graded manifold $(N,\mh^1,\ldots,\mh^s)$ with a Riemannian metric, then $A_{d'}(M')=\infty$ for any open set $M'\subset M$ when $d'<d$. This follows easily since in the expression
\[
r^{\frac{1}{2} (d'-m)} \left| e_1 \wedge \ldots \wedge e_m\right|_{g_r}
\]
we would have summands with negative exponent for $r$.
\end{remark}

In the following example, we exhibit a Carnot manifold with two different Riemannian metrics that coincide when restricted to the horizontal distribution, but yield different area functionals of a given degree

\begin{example}
\label{ex:dependence on the metric extension}
We consider the Carnot group $\mathbb{H}^1 \otimes \mathbb{H}^1$, which is the direct product of two Heisenberg groups. Namely, let  $\rr^3 \times \rr^3$ be the $6$-dimensional Euclidean space with  coordinates $(x,y,z,x',y',z')$. We consider the $4$-dimensional distribution $\mh$ generated by
\begin{align*}
X&=\partial_x-\dfrac{y}{2}\partial_z,  &   Y&=\partial_y+\dfrac{x}{2}\partial_z,\\
X'&=\partial_{x'}-\dfrac{y'}{2}\partial_{z'} &Y'&=\partial_{y'}+\dfrac{x'}{2} \partial_{z'}.
\end{align*}
The vector fields $Z=[X,Y]=\partial_z$ and $Z'=[X',Y']=\partial_{z'}$ are the only non trivial commutators that generate, together with $X,Y,X',Y'$, the subspace $\mathcal{H}^2=T(\hh^1\otimes\hh^1)$. Let $\Omega$ be a bounded open set of $\rr^2$ and $u$  a smooth function on $\Omega$ such that $u_t(s,t)\equiv0$. We consider the immersed surface 
\begin{align*}
\Phi:\Omega &\longrightarrow \mathbb{H}^1 \otimes \mathbb{H}^1, \\
(s,t)&\longmapsto (s,0,u(s,t),0,t,u(s,t)),
\end{align*}
whose tangent vectors  are 
\begin{align*}
\Phi_s=& (1,0,u_s,0,0,u_s)=X+u_s \ Z+u_s \ Z',\\
\Phi_t=& (0,0,0,0,1,0)=Y'.
\end{align*}
Thus, the $2$-vector tangent to $M$ is given by  
\[
\Phi_s\wedge\Phi_t=X\wedge Y'+ u_s(Z\wedge Y'+Z'\wedge Y').
\]

When $u_s(s,t)$ is different from zero the degree is equal to $3$, since both $Z\wedge Y'$ and $Z'\wedge Y'$ have degree equal to $3$. Points of degree $2$ corresponds to the zeroes of $u_s$. We define a $2$-parameter family $g_{\lambda,\nu}$ of Riemannian metrics on $\hh^1\otimes\hh^1$, for $(\la,\mu)\in\rr^2$, by the conditions (i) $(X,Y,X',Y')$ is an orthonormal basis of $\mh$, (ii) $Z$,  $Z'$ are orthogonal to $\mh$, and (iii) $g(Z,Z)= \lambda$, $g(Z',Z')= \mu$ and $g(Z',Z)=0$. Therefore, the degree $3$ area of $\Omega$ with respect to the metric $g_{\mu,\nu}$ is given by 
\[
A_3(\Omega)=\int_{\Om} u_s( \lambda+ \nu) \ ds dt.
\]
As we shall see later, these  different functionals will  not have the same critical points, that would depend on the election of Riemannian metric.  
\end{example}

\section{Examples}
\label{sc:examples}

\subsection{Degree of a hypersurface in a Carnot manifold} \label{ssc:hyp}
Let $M$ be a $C^1$ hypersurface immersed in an equiregular Carnot manifold $(N,\mh)$, where $\mh$ is a bracket generating $l$-dimensional distribution. Let $Q$ be the homogeneous dimension of $N$ and $p\in M$. 

Let us check that $\deg(M)=Q-1$. The pointwise degree of $M$ is given by 
\[
\deg_{M}(p)=\sum_{j=1}^s j (\tilde{m}_j-\tilde{m}_{j-1}) ,
\]
where $\tilde{m}_j=\text{dim}(\tilde{\mh}_p^j)$ with $\tilde{\mh}_p^j=T_p M \cap \mathcal{H}_p^j$. Recall that $n_i=\dim(\mh_p^i)$. As $T_pM$ is a hyperplane of $T_pN$ we have that either $\tilde{\mh}_p^i=\mh^i_p$ and $\tilde{m}_i=n_i$, or $\tilde{\mh}_p^i$ is a hyperplane of $\mh_p^i$ and $\tilde{m}_i=m_i-1$. On the other hand,
\[
\tilde{m}_i-\tilde{m}_{i-1}\le n_i-n_{i-1}.
\]
Writing 
\[
 n_i-n_{i-1}=\tilde{m}_i-\tilde{m}_{i-1}+z_i,
\]
for non-negative integers $z_i$ and adding up on $i$ from $1$ to $s$ we get
\[
\sum_{i=1}^s z_i=1,
\]
since $\tilde{m}_s=n-1$ and $n_s=n$. We conclude that there exists $i_0\in\{1,\ldots,s\}$ such that $z_{i_0}=1$ and $z_j=0$ for all $j\neq i_0$. This implies
\begin{alignat*}{2}
\tilde{m}_i&=n_i , && \qquad i<i_0,
\\
\tilde{m}_i&=n_i -1, &&\qquad i\ge i_0.
\end{alignat*}

If $i_0>1$ for all $p\in M$, then $\mh\subset TM$, a contradiction since $\mh$ is a bracket-generating distribution. We conclude that $i_0=1$ and so 
\begin{align*}
\deg(M)&=\sum_{i=1}^s i\,(\tilde{m}_i-\tilde{m}_{i-1})=1\cdot\tilde{m}_1+\sum_{i=2}^s i\,(\tilde{m}_i-\tilde{m}_{i-1})
\\
&=1\cdot (n_1-1)+\sum_{i=2}^s i\,(n_i-n_{i-1})=Q-1.
\end{align*}

\subsection{$A_{2n+1}$-area of a hypersurface in a $(2n+1)$-dimensional contact manifold}
\label{contact geometry}
A \textit{contact manifold} is a smooth manifold $M^{2n+1}$ of odd dimension endowed with a one form $\omega$ such that $d\omega$ is non-degenerate when  restricted to $\mathcal{H}=\text{ker}(\omega)$. Since it holds 
\[
 d\omega(X,Y)=X(\omega(Y))-Y(\omega(X))-\omega([X,Y]),
\]
for $X,Y\in\mathcal{H}$, the distribution $\mathcal{H}$ is non-integrable and satisfies H\"{o}rmander rank condition by Frobenius theorem. When we define a horizontal metric $h$ on the distribution $\mathcal{H}$ then $(M,\mathcal{H},h)$ is a sub-Riemannian structure. It is easy to prove that there exists an unique vector field $T$ on $M$ so that 
\[
 \omega(T)=1, \quad \mathcal{L}_T(X)=0,
\]
where $\mathcal{L}$ is the Lie derivative and $X$ is any vector field on $M$. This vector field $T$ is called the \textit{Reeb vector field}. We can always extend the horizontal metric $h$ to the Riemannian metric $g$ making $T$  a unit vector orthogonal to $\mathcal{H}$.

Let $\Sigma$ be a $C^1$ hypersurface immersed in $M$. In this setting the singular set of $\Sigma$ is given by 
\[
 \Sigma_0=\{p \in \Sigma : T_p \Sigma=\mathcal{H}_p \},
\]
and corresponds to the points in $\Sigma$ of degree $2n$. Observe  that the non-integrability of $\mathcal{H}$ implies that the set $\Sigma\smallsetminus \Sigma_0$ is not empty in any hypersurface $\Sigma$.

Let $N$ be the unit vector field normal to $\Sigma$ at each point, then on the regular set $\Sigma \smallsetminus \Sigma_0$ the $g$-orthogonal projection $N_h$ of $N$ onto the distribution $\mathcal{H}$ is different from zero. Therefore out of the singular set 
$\Sigma_0$ we define the \textit{horizontal unit normal} by 
\[
 \nu_h=\dfrac{N_h}{|N_h|},
\]
and the vector field 
\[
 S=\langle N,T \rangle \nu_h-|N_h|T,
\]
which is tangent to $\Sigma$ and belongs to $\mathcal{H}^2$. Moreover, $T_p\Sigma \cap (\mathcal{H}^2_p\smallsetminus \mathcal{H}^1_p)$ has
dimension equal to one and $T_p\Sigma \cap \mathcal{H}_p^1$ equal to $2n-1$, thus the degree of the hypersurface $\Sigma$ out of the singular set is equal to $2n+1$. Let $e_1,\ldots,e_{2n-1}$ be an orthonormal basis in $T_p\Sigma\cap\mathcal{H}^1_p$. Then $e_1,\ldots,e_{2n-1},S_p$ is an orthonomal basis of $T_p\Sigma$ and we have 
\[
 e_1\wedge \ldots\wedge e_{2n-1}\wedge S=\escpr{N,T}e_1\wedge \ldots\wedge e_{2n-1}\wedge \nu_h- |N_h|e_1\wedge \ldots\wedge e_{2n-1}\wedge T.
\]
Hence we obtain 
\begin{equation}
A_{2n+1}(\Sigma)=\int_{\Sigma} |N_h| d\Sigma.
\end{equation}
In \cite{Galli} Galli obtained this formula as the perimeter of a set that has $C^1$ boundary $\Sigma$ and in \cite{Nataliya} Shcherbakova as the limit of the volume of a $\eps$-cylinder around $\Sigma$ over its height equal to $\eps$. This formula was obtain for surfaces in a $3$-dimensional pseudo-hermitian manifold in \cite{ChengMalchiodi} and by S. Pauls in \cite{scottpauls}. This is exactly the area formula independently established in recent years in the Heisenberg group $\hh^n$, that is the prototype for contact manifolds (see for instance \cite{DanielliGarofalo,ChengMalchiodi,ChengFangYang,RitoreRosales,HladPauls}).

\begin{example}[The roto-translational group]
Take coordinates $(x,y,\theta)$ in the $3$-dimensional manifold $\rr^2\times\mathbb{S}^1$. We consider the contact form
\[
 \omega=\si dx- \co dy,
\]
the horizontal distribution $\mathcal{H}=\text{ker}(\omega)$,  is spanned by the vector fields 
\[
X=\cos(\theta) \partial_x+ \sin(\theta) \partial_y, \quad  Y=\partial_{\theta},
\]
and the horizontal metric $h$ that makes $X$ and $Y$ orthonormal.

Therefore $\rr^2\times \mathbb{S}^1$ endowed with this one form $\omega$
is a contact manifold. Moreover $(\rr^2\times \mathbb{S}^1, \mathcal{H},h)$ has a sub-Riemannian structure which is also a Lie group known as the roto-translational group. A mathematical model of simple cells of the visual cortex V1 using the sub-Riemannian geometry 
of the roto-translational Lie group was proposed by Citti and Sarti (see \cite{cittilibro}, \cite{citti}). Here the Reeb vector field is given by
\[
 T=[X,Y]=\sin(\theta) \partial_x-\cos(\theta) \partial_y.
\]
Let $\Omega$ be an open set of $\rr^{2}$ and $u:\Omega\rightarrow \rr$ be a function of class $C^1$.
When we consider a graph $\Sigma=\text{Graph}(u)$ given by the zero set level of the $C^1$ function 
$$f(x,y,\theta)=u(x,y)-\theta=0,$$
the projection of the unit normal $N$ onto the horizontal distribution is given by 
\[
 N_h=\dfrac{ X(u)X-Y}{\sqrt{1+X(u)^2+T(u)^2}}.
\]
Hence the $3$-area functional is given by
\begin{equation*}
A_3(\Sigma , \lambda)=\int_{\Omega}\left(1+X(u)^2\right)^{\frac{1}{2}} \, dx dy.
\end{equation*}
\end{example}
\subsection{$A_4$-area of a ruled surface  immersed in an Engel structure}
\label{sc:2jetspace}
Let $E=\rr^2  \times  \mathbb{S}^1 \times \rr$ be a smooth manifold with coordinates $p=(x,y,\theta,k)$. We set $\mathcal{H}=\text{span}\{X_1,X_2\}$, where 
\begin{equation}
 X_{1}= \cos( \theta ) \partial_{x} + \sin( \theta )  \partial_{y}+ k\partial_{\theta}, \qquad X_{2}=\partial_{k}.
 \label{vector fields 1layer}
\end{equation}
Therefore $(E,\mh)$ is a Carnot manifold, indeed  $\mh$ satisfy the H\"{o}rmander rank condition since $X_1$ and $X_2$
\begin{equation}
\begin{aligned}
 X_{3}&=[X_{1},X_{2}]=-\partial_{\theta}\\
 X_{4}&=[X_{1},[X_{1},X_{2}]]=-\sin(\theta)\partial_{x}+ \cos(\theta) \partial_{y}
 \label{vector fields 2layer}
\end{aligned}
\end{equation}
generate all the tangent bundle.
Here we follow a computation developed by Le Donne and Magnani in \cite{DonneMagnani} in the Engel group. Let $\Omega$ be an open set of $\rr^2$ endowed with the Lebesgue measure. Since we are particularly interested in applications to the visual cortex (see \cite{thesis},\cite[1.5.1.4]{Petitot2014} to understand the reasons)
we consider the immersion $\Phi:\Omega \rightarrow E$ given by $\Phi=(x,y,\theta(x,y),\kk(x,y))$ and we set $\Sigma=\Phi(\Omega)$. The tangent vectors to 
$\Sigma$ are 
\begin{equation}
  \Phi_{x}=(1,0, \theta_{x}, k_{x}), \qquad \Phi_{y}=(0,1,\theta_{y},\kk_{y}).
  \label{vector fields tangent to Sigma}
\end{equation}

In order to know the dimension of $T_p \Sigma \cap \mathcal{H}_p$  it is necessary to take in 
account the rank of the matrix
\begin{equation}
 B=\left(\begin{array}{cccc}
  1 & 0 & \theta_{x}& \kk_{x}\\
  0 & 1 & \theta_{y}& \kk_{y}\\
  \cos(\theta) &\sin(\theta)&\kk&0\\
  0 & 0 & 0 & 1\\
 \end{array}\right).
\end{equation}
Obviously $\text{rank}(B)\geqslant3$, indeed we have 
$$\det\left(\begin{array}{ccc}
  1 & 0 & \kk_{x}\\
  0 & 1 & \kk_{y}\\
  0 & 0  & 1\\
 \end{array}\right) \ne 0.$$
Moreover, it holds
\begin{equation} 
 \begin{aligned}
 \text{rank}(B)=3 &\quad \Leftrightarrow \quad \det\left(\begin{array}{ccc}
  \cos(\theta) &\sin(\theta)&\kk\\
  1 & 0 & \theta_{x}\\
  0 & 1 & \theta_{y}\\
 \end{array}\right)=0\\
 &\quad \Leftrightarrow \quad \kk-\theta_{x}\cos(\theta)-\theta_{y}\sin(\theta)=0\\
 &\quad \Leftrightarrow \quad  \kk=X_1(\theta(x,y)).
 \end{aligned} 
 \label{foliation}
\end{equation}
Since we are inspired by the foliation property of hypersurface in the Heisenberg group and roto-translational
group, in the present work we consider only surface $\Sigma=\{(x,y,\theta(x,y),\kk(x,y))\}$ verifying the foliation
condition $\kk=X_1(\theta(x,y))$. Thus,
we have 
\begin{equation}
 \begin{aligned}
 \Phi_{x}\wedge\Phi_{y}=&(\cos(\theta)\kk_{y}-\sin(\theta)\kk_{x})X_{1}\wedge X_{2}-(\cos(\theta)\theta_{y} -\sin(\theta) \theta_{x})X_{1}\wedge X_{3}\\
                       &+X_{1}\wedge X_{4}+(\theta_{x}\kk_{y}-\theta_{y}\kk_{x}-\kk(\cos(\theta)\kk_{y}-\sin(\theta)\kk_{x}))X_{2}\wedge X_{3}\\
                       &+(\sin(\theta)\kk_{y}+\cos(\theta)\kk_{x})X_{2}\wedge X_{4}
\\&+(\kk-\sin(\theta)\theta_{y}-\cos(\theta)\theta_{x})X_{3}\wedge X_{4}.
\end{aligned}
\label{normalv2}
\end{equation}
By the foliation condition (\ref{foliation}) we have that the coefficient of $X_{3}\wedge X_{4}$ is always equal to zero, then we deduce that $\deg(\Sigma)\leqslant 4$.
Moreover, the coefficient of $X_{1}\wedge X_{4}$ never vanishes, therefore $\deg(\Sigma)=4$ and there are not singular points in $\Sigma$. 
When $\kk=X_1(\theta)$ a tangent basis of $T_p \Sigma$ adapted to \ref{tangent flag of M} is given by
\begin{equation}
\begin{aligned}
 e_1&=\co \Phi_x+ \si \Phi_y= X_1+X_1(\kk)X_2,\\
 e_2&=-\si \Phi_x+\co \Phi_y=X_4-X_4(\theta)X_3+X_4(\kk)X_2.
\end{aligned}
\label{adapted vector tangent to Sigma}
\end{equation}

When we fix the Riemannian metric $g_1$ that makes $(X_1,\ldots,X_4)$ orthonormal we have that the $A_4$-area of $\Sigma$ is given by 
\begin{equation}
\label{4 degree area in E1}
A_4(\Sigma,g)=\int_{\Omega}\left(1+X_1(\kk)^2\right)^{\frac{1}{2}} \ dx dy
 =\int_{\Omega}\left(1+X_1^2(\theta)^2\right)^{\frac{1}{2}} \ dx dy.
\end{equation}
When we fix the Euclidean metric $g_0$ that makes $(\partial_1,\partial_2, \partial_{\theta},\partial_k)$ we have that the $A_4$-area of $\Sigma$ is given by
\begin{equation}
\label{4 degree  area in E2}
A_4(\Sigma,g_0)=\int_{\Omega}\left(1+\kk^2+X_1(\kk)^2\right)^{\frac{1}{2}} \ dx dy.
\end{equation}

\section{Admissible variations for submanifolds}
\label{sc:admissible}
Let us consider an $m$-dimensional manifold $\bar{M}$ and an immersion $\Phi:\bar{M}\to N$ into an equiregular graded manifold endowed with a Riemannian metric $g=\escpr{\cdot,\cdot}$. We shall denote the image $\Phi(\bar{M})$ by $M$ and $d:=\deg(M)$. In this setting we have the following definition

\begin{definition}
\label{def:admissible}
A smooth map $\Gamma:\bar{M}\times (-\eps,\eps)\to N$ is said to be \emph{an admissible variation} of $\Phi$ if $\Gamma_t:\bar{M}\to N$, defined by $\Gamma_t(\bar{p}):=\Gamma(\bar{p},t)$, satisfies the following properties
\begin{enumerate}
\item[(i)] $\Gamma_0=\Phi$,
\item[(ii)] $\Gamma_t(\bar{M})$  is an immersion of the same degree as $\Phi(\bar{M})$ for small enough $t$, and
\item[(iii)] $\Gamma_t(\bar{p})=\Phi(\bar{p})$ for $\bar{p}$ outside a given compact subset of $\bar{M}$.
\end{enumerate}
\end{definition}

\begin{definition}
Given an admissible variation $\Gamma$, the \emph{associated variational vector field} is defined by
\begin{equation}
\label{eq:admissibleV}
V(\bar{p}):=\frac{\ptl\Gamma}{\ptl t}(\bar{p},0).
\end{equation}
\end{definition}

The vector field $V$ is an element of $\mathfrak{X}_0(\bar{M},N)$: i.e., a smooth map $V:\bar{M}\to TN$ such that $V(\bar{p})\in T_{\Phi(\bar{p})}N$ for all $\bar{p}\in\bar{M}$. It is equal to $0$ outside a compact subset of $\bar{M}$.

Let us see now that the variational vector field $V$ associated to an admissible variation $\Ga$ satisfies a differential equation of first order. Let $p=\Phi(\bar{p})$ for some $\bar{p}\in\bar{M}$, and $(X_1, \cdots, X_n)$ an adapted frame in a neighborhood $U$ of $p$. Take a basis $(\bar{e}_1,\ldots,\bar{e}_m)$ of $T_{\bar{p}}{\bar{M}}$ and let $e_j=d\Phi_{\bar{p}}(\bar{e}_j)$ for $1\le j\le m$. As $\Gamma_t(\bar{M})$ is a submanifold of the same degree as $\Phi(\bar{M})$ for small $t$, there follows
\begin{equation}
\label{flow that preserves degree sub}
 \left\langle (d  \Gamma_t)_{\bar{p}} (e_1) \wedge \ldots \wedge (d \Gamma_t)_{\bar{p}}(e_m) , {(X_J)_{\Gamma_t(\bar{p})} } \right \rangle=0,
\end{equation}
for all $X_J=X_{j_1} \wedge \ldots \wedge X_{j_m}$, with $1\le j_1<\cdots < j_m\le n$, such that $\deg(X_J)>\deg(M)$. Taking the derivative with respect to $t$ in equality \eqref{flow that preserves degree sub} and evaluating at $t=0$ we obtain the condition
\begin{equation*}
0=\langle e_1\wedge\ldots \wedge e_m,\nabla_{V(p)}X_J\rangle +\sum_{k=1}^m \langle e_1 \wedge \ldots \wedge \nabla_{e_k}  V \wedge \ldots \wedge e_m ,X_J\rangle 
\end{equation*}
for all $X_J$ such that $\deg(X_J)>\deg(M)$. In the above formula, $\escpr{\cdot,\cdot}$ indicates the scalar product in the space of $m$-vectors induced by the Riemannian metric $g$. The symbol $\nabla$ denotes, in the left summand, the Levi-Civita connection associated to $g$ and, in the right summand, the covariant derivative of vectors in $\mathfrak{X}(\bar{M},N)$ induced by $g$. Thus, if a variation preserves the degree then the associated variational vector field satisfies the above condition and we are led to the following definition.

\begin{definition}
\label{def:adm}
Given an immersion $\Phi:\bar{M}\to N$, a vector field $V\in \mathfrak{X}_0(\bar{M},N)$ is said to be \emph{admissible}  if it satisfies the system of first order PDEs
 \begin{equation}
 \label{System of PDEs for admissible}
 0=\langle e_1\wedge\ldots \wedge e_m,\nabla_{V(p)}X_J\rangle +\sum_{k=1}^m \langle e_1 \wedge \ldots \wedge\nabla_{e_k}  V \wedge \ldots \wedge e_m ,X_J\rangle 
\end{equation}
where $X_J=X_{j_1} \wedge \ldots \wedge X_{j_m}$,  $\deg(X_J)>d$ and $p\in M$. We denote by $\mathcal{A}_{\Phi}(\bar{M},N)$ the set of admissible vector fields.
\end{definition}

It is not difficult to check that the conditions given by \eqref{System of PDEs for admissible} are independent of the choice of the adapted basis.

Thus we are led naturally to a problem of integrability: given $V\in\mathfrak{X}_0(\bar{M},N)$ such that the first order condition \eqref{System of PDEs for admissible} holds, we ask whether an admissible variation whose associated variational vector field is $V$ exists. 

\begin{definition}
We say that an admissible vector field $V\in\mathfrak{X}_0(\bar{M},N)$ is \emph{integrable} if there exists an admissible variation such that the associated variational vector field is $V$.
\end{definition}

\begin{proposition}
\label{prop:normaladm}
Let $\Phi: \bar{M} \to N$ be an immersion into a graded manifold. Then a vector field $V \in \mathfrak{X}_0(\bar{M},N)$ is admissible if and only if  its normal component $V^{\perp}$ is admissible.
\end{proposition}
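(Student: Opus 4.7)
My plan is to reduce the proposition to two elementary facts: the admissibility system \eqref{System of PDEs for admissible} is $\rr$-linear as a differential operator in $V$, and any tangential vector field along $\Phi$ is automatically admissible. Granting both, the orthogonal decomposition $V=V^\top+V^\perp$, with $V^\top_{\bar p}\in d\Phi_{\bar p}(T_{\bar p}\bar M)$ and $V^\perp_{\bar p}$ in its $g$-orthogonal complement, gives the stated equivalence by linearity: $V$ solves the system if and only if $V^\perp=V-V^\top$ does.

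Linearity is a straightforward inspection of \eqref{System of PDEs for admissible}: every summand has the form $\escpr{\cdot,\nabla_V X_J}$ or $\escpr{e_1\wedge\cdots\wedge\nabla_{e_k}V\wedge\cdots\wedge e_m,X_J}$, and both $\nabla_V X_J$ and $\nabla_{e_k}V$ depend $\rr$-linearly on $V$ and its first derivatives. For the substantive step, that $V^\top$ is admissible, I would use the derivation recalled just before Definition~\ref{def:adm}: the system \eqref{System of PDEs for admissible} is precisely the $t$-derivative at $t=0$ of the degree-preservation identity \eqref{flow that preserves degree sub}, and this derivative depends only on the variational vector field. Consequently it is enough to exhibit \emph{one} admissible variation whose variational vector field equals $V^\top$. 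Since $\Phi$ is an immersion and $V^\top$ is tangential along $\Phi$, there is a unique compactly supported vector field $W$ on $\bar M$ with $d\Phi(W)=V^\top$. Its flow $\phi_t$ on $\bar M$ is globally defined because $W$ has compact support, and I would take $\Gamma_t:=\Phi\circ\phi_t$. By construction $\Gamma_0=\Phi$, the derivative at $t=0$ is $d\Phi(W)=V^\top$, $\Gamma_t$ agrees with $\Phi$ off the support of $W$, and
\[
\Gamma_t(\bar M)=\Phi(\phi_t(\bar M))=\Phi(\bar M)=M,
\]
so $\Gamma_t$ is trivially an immersion of the same degree as $\Phi$ for all small $t$.

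I do not foresee any genuine obstacle here; the only point requiring a line of bookkeeping is that $\Gamma_t$ fulfils every clause of Definition~\ref{def:admissible} (smoothness, the immersion property, compact support of the perturbation), all of which are standard ODE consequences of the compact support of $W$. Once the admissibility of $V^\top$ is in place, the proposition follows from the decomposition $V=V^\top+V^\perp$ and the $\rr$-linearity of \eqref{System of PDEs for admissible}.
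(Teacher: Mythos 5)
Your proposal is correct and follows essentially the same route as the paper: additivity (linearity) of the admissibility system \eqref{System of PDEs for admissible} in $V$, together with the observation that $V^{\top}$ is admissible because the flow it induces gives an admissible variation leaving $\Phi(\bar{M})$ invariant. Your version merely spells out in more detail the pullback of $V^{\top}$ to a compactly supported field $W$ on $\bar{M}$ and the verification of Definition~\ref{def:admissible} for $\Gamma_t=\Phi\circ\phi_t$, which the paper leaves implicit.
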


\begin{proof}
Since the Levi-Civita connection and the covariant derivative are additive we deduce that the admissibility condition \eqref{System of PDEs for admissible} is additive in $V$. We decompose $V=V^\top+V^{\perp}$ in its tangent $V^\top$ and normal $V^{\perp}$ components and observe that $V^\top$ is always admissible since the flow of $V^\top$ is an admissible variation leaving $\Phi(\bar{M})$ invariant with variational vector field $V^\top$. Hence, $V^{\perp}$ satisfies \eqref{System of PDEs for admissible} if and only if $V$  verifies \eqref{System of PDEs for admissible}.
\end{proof}

\section{The structure of the admissibility system of first order PDEs}
\label{sc:structure}
Let us consider an open set $U\subset N$ where a local adapted basis $(X_1,\ldots,X_n)$ is defined. We know that the simple $m$-vectors $X_J:=X_{j_1}\wedge\ldots\wedge X_{j_m}$ generate the space $\Lambda_m(U)$ of $m$-vectors. At a given point $p\in U$, its dimension is given by the formula
\[
\dim(\Lambda_m(U)_p)=\binom{n}{m}.
\]

Given two $m$-vectors $v,w\in\Lambda_m(U)_p$, it is easy to check that $\deg(v+w) \le \max\{\deg{v},\deg{w}\}$, and that $\deg{\la v}=\deg{v}$ when $\la\neq 0$ and $0$ otherwise. This implies that the set
\[
\Lambda_m^d(U)_p:=\{v\in\Lambda_m(U)_p: \deg{v}\le d\}
\]
is a vector subspace of $\Lambda_m(U)_p$. To compute its dimension we let $v_i:=(X_i)_p$ and we check that a basis of $\Lambda_m^d(U)_p$ is composed of the vectors
\[
v_{i_1}\wedge\ldots \wedge v_{i_m}\ \text{such that } \sum_{j=i_1}^{i_m}\deg(v_j)\le d.
\]
To get an $m$-vector in such a basis we pick any of the $k_1$ vectors in $\mathcal{H}^1_p\cap\{v_1,\ldots,v_n\}$ and, for $j=2,\ldots,s$, we pick any of the $k_j$ vectors on $(\mathcal{H}^j_p\smallsetminus \mathcal{H}^{j-1}_p)\cap \{v_1,\ldots,v_n\}$, so that
\begin{itemize}
\item $k_1+\cdots +k_s=m$, and
\item $1\cdot k_1+\cdots +s\cdot k_s\le d$.
\end{itemize}
So we conclude, taking $n_0=0$, that
\[
\dim(\Lambda_m^d(U)_p)=\sum_{ \substack{
            k_1+ \cdots + k_s=m,\\
            1 \cdot k_1+ \cdots + s \cdot k_s\le d}}
\bigg(\prod_{i=1}^s \binom{n_i-n_{i-1}}{k_i} \bigg).
\]

When we consider two simple $m$-vectors $v_{i_1}\wedge\ldots\wedge v_{i_m}$ and $v_{j_1}\wedge\ldots\wedge v_{j_m}$, their scalar product is $0$ or $\pm 1$, the latter case when, after reordering if necessary, we have $v_{i_k}=v_{j_k}$ for $k=1,\ldots,m$. This implies that the orthogonal subspace $\Lambda_m^d(U)_p^\perp$ of $\Lambda_m^d(U)_p$ in $\Lambda_m(U)_p$ is generated by the $m$-vectors
\[
v_{i_1}\wedge\ldots \wedge v_{i_m}\ \text{such that } \sum_{j=i_1}^{i_m}\deg(v_j)> d.
\]
Hence we have 
\begin{equation}
\label{dimension of m-vector of degree grater than d}
\dim(\Lambda_m^d(U)_p^\perp)=\sum_{ \substack{
            k_1+ \cdots + k_s=m,\\
            1 \cdot k_1+ \cdots + s \cdot k_s> d}}
\bigg(\prod_{i=1}^s \binom{n_i-n_{i-1}}{k_i} \bigg),
\end{equation}
with $n_0=0$. Since $N$ is equiregular, $\ell=\dim(\Lambda_m^d(U)_p^{\perp})$ is constant on $N$. Then we can choose an orthonormal basis $(X_{J_1},\ldots,X_{J_{\ell}})$ in $\Lambda_m^d(U)_p^{\perp}$ at each point $p \in U$.

\subsection{The admissibility system with respect to an adapted local basis}

In the same conditions as in the previous subsection, let $\ell=\dim (\Lambda_m^d(U)_p^{\perp})$ and $(X_{J_1},\ldots,X_{J_{\ell}})$ an orthonormal basis of $\Lambda_m^d(U)_p^{\perp}$. Any vector field $V\in\mathfrak{X}(\bar{M},N)$ can be expressed in the form
\[
V=\sum_{h=1}^n f_h X_h ,
\] 
where $f_1,\ldots,f_n  \in  C^{\infty}(\Phi^{-1}(U), \rr )$. We take $\bar{p}_0\in\Phi^{-1}(U)$ and, reducing $U$ if necessary, a local adapted basis $(E_i)_i$ of $T\bar{M}$ in $\Phi^{-1}(U)$. Hence the admissibility system~\eqref{System of PDEs for admissible} is equivalent to 
\begin{equation}
\label{eq:local system of PDEs}
  \sum_{j=1}^m \sum_{h=1}^n c_{i j h} \, E_j (f_h)  +  \sum_{h=1}^n \beta_{i h} \, f_h =0, \quad i=1,\ldots,\ell,
\end{equation}
where
\begin{equation}
\label{def:ccoef}
 c_{i j h}(\bar{p})= \escpr{e_1 \wedge \ldots \wedge \overset{(j)} {(X_h)_p} \wedge \ldots \wedge e_m, (X_{J_i})_p },
\end{equation}
and
\begin{equation}
\begin{split}
\label{def:bcoef}
\beta_{i h}(\bar{p})&=\escpr{e_1\wedge \ldots \wedge e_m,\nabla_{(X_h)_p} X_{J_i}}+
\\ &\qquad\qquad + 
\sum_{j=1}^m \escpr{e_1 \wedge \ldots \wedge \nabla_{e_j} X_h \wedge \ldots \wedge e_m, (X_{J_i})_p}\\
 &=\sum_{j=1}^m \escpr{e_1 \wedge \ldots \wedge [E_j,X_h] (p) \wedge \ldots \wedge e_m, (X_{J_i})_p}.
 \end{split}
\end{equation}
In the above equation we have extended the vector fields $E_i$ in a neighborhood of $p_0=\Phi(\bar{p}_0)$ in $N$, denoting them in the same way.

\begin{definition}
\label{def:rho}
Let $\tilde{m}_{\alpha}(p)$ be the dimension of $\tilde{\mh}_p^{\alpha}=T_p M \cap \mh^{\alpha}_p$, $\alpha\in \{1,\ldots,s\}$, where we consider the flag defined in \eqref{tangent flag of M}. Then we set 
\[
\iota_0(U)=\max_{p \in U}  \min_{1\le \alpha\le s} \{ \alpha  :  \tilde{m}_{\alpha} (p) \ne 0 \}.
\]
and
\begin{equation}
\label{def:nalpha}
\rho:=n_{\iota_0}=\dim(\mh^{\iota_0})\ge \dim(\mh^1)=n_1.
\end{equation}
\end{definition}

\begin{remark}
In the differential system \eqref{eq:local system of PDEs}, derivatives of the function $f_h$ appear only when some coefficient $c_{ijh}(\bar{p})$ is different from $0$. For fixed $h$, notice that $c_{i j h}(\bar{p})=0$, for all $i=1,\ldots,\ell$, $j=1,\ldots,m$ and $\bar{p}$ in $\Phi^{-1}(U)$ if and only if 
\[
\deg(e_1 \wdw \overset{(j)} {(X_h)_p} \wdw e_m)\le d,\quad \text{for all } 1\le j\le m, p \in\Phi^{-1}(U).
\]
This property is equivalent to
\[
\deg((X_h)_p)\le\deg(e_j), \text{for all } 1\le j\le m, p \in\Phi^{-1}(U).
\]
So we have $c_{ijh}=0$ in $\Phi^{-1}(U)$ for all $i,j$ if and only if $\deg(X_h)\le \iota_0(U)$.
\end{remark}

We write
\[
V=\sum_{h=1}^{\rho} g_h  X_h+ \sum_{r=\rho+1}^n f_r   X_r ,
\] 
so that the local system \eqref{eq:local system of PDEs} can be written as
\begin{equation}
\label{eq:local system of PDEs2}
  \sum_{j=1}^m \sum_{r=\rho+1}^n c_{i j r}  E_j (f_r)  +  \sum_{r=\rho+1}^n b_{i r} f_r + \sum_{h=1}^\rho a_{i h} g_h  =0,
\end{equation}
where $c_{i j r}$ is defined in \eqref{def:ccoef} and, for $1\le i\le\ell$,
\begin{equation}
\label{def:ab}
a_{ih}=\beta_{ih},\quad b_{ir}=\beta_ {ir},\ 1\le h\le\rho,\ \rho+1\le r\le n,
\end{equation}
where $\beta_{ij}$ is defined in \eqref{def:bcoef}. We denote by $B$ the $\ell \times (n-\rho)$ matrix whose entries are $b_{i r}$,  by $A$ the  $\ell \times \rho$ whose entries are $a_{i h}$ and for $j=1,\ldots,m$  we denote by $C_j$ the  $\ell \times(n- \rho )$ matrix 
 $C_j=(c_{ijh})_{h=\rho+1,\ldots ,n}^{i=1,\ldots,\ell}$.
Setting 
\begin{equation}
\label{def:FG}
F=\begin{pmatrix} f_{\rho+1} \\ \vdots \\ f_n \end{pmatrix}, \quad G=\begin{pmatrix} g_1 \\ \vdots \\ g_{\rho} \end{pmatrix}
\end{equation}
the admissibility system \eqref{eq:local system of PDEs} is given by
\begin{equation}
\sum_{j=1}^m C_j E_j(F)+ BF+ AG=0.
\label{eq:local system of PDEs3}
\end{equation}

\subsection{Independence on the metric}
\label{ssc:independence}
Let $g$ and $\tilde{g}$ be two Riemannian metrices on $N$ and $(X_i)$  be orthonormal adapted basis with respect to $g$  and $(Y_i)$ with respect to $\tilde{g}$. Clearly we have 
\[
Y_i=\sum_{j=1}^n d_{ji} X_j,
\]
for some square invertible matrix $D=(d_{ji})_{j=1,\ldots,n}^{i=1,\ldots,n}$ of order $n$. Since $(X_i)$ and $(Y_i)$ are adapted basis, $D$ is a block matrix
\[
D=\begin{pmatrix}
D_{1 1} & D_{1 2}& D_{1 3} & \ldots & D_{1 s} \\
0 & D_{2 2}& D_{2 3} &   \ldots         & D_{2 s}\\
0& 0 & D_{3 3}& \ldots &D_{3 s}\\
0& 0  & 0& \ddots& \vdots\\

0 & 0    &0 &  0& D_{s s}\\
\end{pmatrix},
\]
where $D_{i i}$ for $i=1,\ldots,s$ are square matrices of orders $n_i$. Let $\rho$ be the integer defined in \eqref{def:rho}, then we define $D_h=(d_{ji})_{i,j=1,\ldots,\rho}$, $D_v=(d_{ji})_{i,j=\rho+1,\ldots,n}$ and $D_{hv}=(d_{ji})_{j=1,\ldots,\rho}^{i=\rho+1,\ldots,n}$. Let us express $V$ as a linear combination of $(Y_i)$
\[
V=\sum_{h=1}^{\rho} \tilde{g}_h Y_h+\sum_{r=\rho+1}^n \tilde{f}_r Y_r,
\]
then we set
\[
\tilde{F}=\begin{pmatrix} \tilde{f}_{\rho+1} \\ \vdots \\ \tilde{f}_n \end{pmatrix}, \quad \tilde{G}=\begin{pmatrix} \tilde{g}_1 \\ \vdots \\ \tilde{g}_{\rho} \end{pmatrix}
\]
and $F$ and $G$ as in \eqref{def:FG}.

Given $I=(i_1,\ldots, i_m)$ with $i_1 < \ldots < i_m$, we have
\begin{align*}
Y_I&=Y_{i_1} \wdw Y_{i_m}=\sum_{j_1=1}^n \cdots \sum_{j_m=1}^n  d_{j_1 i_1} \cdots d_{j_m i_m} X_{j_1} \wdw  X_{j_m}\\
 &= \sum_{j_1< \ldots < j_m} \lambda^{j_1 \ldots j_m}_{i_1 \ldots i_m} X_{j_1} \wdw  X_{j_m}= \sum_{J} \lambda_{J I} X_J.
\end{align*}
Since the adapted change of basis preserves the degree of the $m$-vectors, the square matrix $\mathbf{\Lambda}=(\lambda_{J I}  )$ of order $\binom{n}{m}$ acting on the $m$-vector  is  given by 
\begin{equation}
\mathbf{\Lambda}=\begin{pmatrix}
\mathbf{\Lambda}_h & \mathbf{\Lambda}_{hv}\\
0 & \mathbf{\Lambda}_v,
\end{pmatrix}
\end{equation} 
where $\mathbf{\Lambda}_h$ and $\mathbf{\Lambda}_v$ are square matrices of order $\binom{n}{m}-\ell$ and $\ell$ respectively and $\mathbf{\Lambda}_{hv}$ is a matrix of order $\left(\binom{n}{m}-\ell \right) \times \ell$. Moreover the matrix $\mathbf{\Lambda}$ is invertible since both $\{X_J\}$ and $\{Y_I\}$ are basis of the vector space of $m$-vectors.
\begin{remark}
One can easily check that the inverse of $\mathbf{\Lambda}$ is given by the block matrix 
\[
\mathbf{\Lambda}^{-1}=\begin{pmatrix}
\mathbf{\Lambda}_h^{-1} & - \mathbf{\Lambda}_h^{-1} \, \mathbf{\Lambda}_{hv} \mathbf{\Lambda}_{v}^{-1}\\
0 & \mathbf{\Lambda}_v^{-1}
\end{pmatrix}.
\]
Setting $\mathbf{\tilde{G}}=(\tilde{g}(X_I,X_J))$ we have 
\[
\mathbf{\tilde{G}}=\begin{pmatrix}
\mathbf{\tilde{G}}_h & \mathbf{\tilde{G}}_{hv}\\
(\mathbf{\tilde{G}}_{hv})^t & \mathbf{\tilde{G}}_v
\end{pmatrix}=(\LMD^{-1})^t (\LMD^{-1}).
\]
Thus it follows  
\begin{align*}
\GT_v&= (\LMD_v^{-1})^t \LMD_v^{-1} + (\LMD_v^{-1})^t \LMD_{hv}^t ( \LMD_h^{-1})^t \, \LMD_h^{-1} \LMD_{hv} \LMD_v^{-1},\\
\GT_{hv}&=-  (\LMD_h^{-1})^t\LMD_h^{-1} \LMD_{hv} \LMD_v^{-1},\\
\GT_{h}&= (\LMD_h^{-1})^t  \LMD_h^{-1} .
\end{align*}
\label{rk:MG}
\end{remark}

Let $\tilde{A}$ be the associated matrix 
\begin{align*}
\tilde{A}=\Big( \tilde{g} \Big( Y_{J_i} , \sum_{j=1}^m E_1 \wedge \ldots \wedge [E_j, Y_h](p) \wedge \ldots \wedge E_m \Big) \Big)^{h=1,\ldots,\rho}_{i=1,\ldots,\ell}.
\end{align*}
Setting
\[
\omega_{J r}=\sum_{j=1}^mg( X_J, E_1 \wdw [E_j, X_r] \wdw E_m),
\]
and $\Omega=\begin{pmatrix}\Omega_h & \Omega_v\end{pmatrix}=(\omega_{J r})_{\deg(J)\le d}^{r=1,\ldots,n}$,
a straightforward computation shows
\begin{equation*}
\begin{aligned}
\tilde{A}=& (\LMD_{hv})^t \Bigg(\GT_{h} \, \Omega_h \, D_h+  \GT_{hv} \, A \, D_h + \GT_{h} \,  \sum_{j=1}^m C_j E_j( D_h ) \Bigg)\\
&+ (\LMD_v)^t \Bigg( (\GT_{hv})^t \, \Omega_h D_h+ \GT_v \, A \, D_h+(\GT_{hv})^t \sum_{j=1}^m C_j E_j( D_h )  \Bigg)
\end{aligned}
\end{equation*}
By Remark \ref{rk:MG} we obtain 
\begin{equation}
\label{eq:tildeA}
\begin{aligned}
\tilde{A}=& (\LMD_{hv})^t \Big((\LMD_h^{-1})^t \LMD_h^{-1} \,( \Omega_h \, D_h + \sum_{j=1}^m C_j E_j( D_h ) )\\
& \quad  -(\LMD_h^{-1})^t \LMD_h^{-1} \LMD_{hv} \LMD_v^{-1} \, A \, D_h   \Big)\\
&- \Big(  \LMD_{hv}^t (\LMD_h^{-1})^t \LMD_h^{-1} \, (\Omega_h D_h + \sum_{j=1}^m C_j E_j( D_h ))\Big)\\
&+\left(  \LMD_v^{-1} +  \LMD_{hv}^t ( \LMD_h^{-1})^t \, \LMD_h^{-1} \LMD_{hv} \LMD_v^{-1} \right) \, A \, D_h\\
=& \LMD_v^{-1} \, A \, D_h.
\end{aligned}
\end{equation}
Preliminary we notice that if $h=1,\ldots,\rho$ we have 
\begin{equation}
\begin{aligned}
 \tilde{c}_{i j h}&= \tilde{g} (Y_{J_i} , E_1 \wedge \ldots \wedge \overset{(j)} {Y_h} \wedge \ldots \wedge E_m)\\
 &=\sum_I \sum_{deg(J)\le d} \sum_{k=1}^{\rho} \lambda_{I J_i} \, \tilde{g}(X_I, X_J) c_{J j k}\, d_{kh}\\
 &=\sum_{\deg(I)\le d} \sum_{\deg(J)\le d} \sum_{k=1}^{\rho} \lambda_{I J_i} \, \tilde{g}(X_I, X_J) c_{J j k}\, d_{kh}+\\
& \quad+ \sum_{\deg(I)> d} \sum_{\deg(J)\le d} \sum_{k=1}^{\rho} \lambda_{I J_i} \, \tilde{g}(X_I, X_J) c_{J j k}\, d_{kh}.
\end{aligned}
\label{eq:c0}
\end{equation}
Therefore, setting
\[
\tilde{C}^{H}_j=\Big( \tilde{g} (Y_{J} , E_1 \wedge \ldots \wedge \overset{(j)} {Y_h} \wedge \ldots \wedge E_m) \Big)_{\deg(J)\le d}^{h=1,\ldots,\rho}
\]
and 
\[
\tilde{C}^{0}_j=\Big( \tilde{g} (Y_{J_i} , E_1 \wedge \ldots \wedge \overset{(j)} {Y_h} \wedge \ldots \wedge E_m) \Big)_{i=1,\ldots,\ell}^{h=1,\ldots,\rho},
\]
by \eqref{eq:c0} we gain
\[
\tilde{C}_j^{0}= (\LMD_{hv}^t \GT_h+ \LMD_v^t (\GT_{hv})^t )( C_j^{H} D_h)=0.
\]
Let $\tilde{C}_j$ be the associated matrix 
\[
\tilde{C}_j=\Big( \tilde{g} (Y_{J_i} , E_1 \wedge \ldots \wedge \overset{(j)} {Y_h} \wedge \ldots \wedge E_m) \Big)_{i=1,\ldots,\ell}^{h=\rho+1,\ldots,n}.
\]
Setting 
\[
\tilde{C}^{HV}_j=\Big( \tilde{g} (Y_{J} , E_1 \wedge \ldots \wedge \overset{(j)} {Y_h} \wedge \ldots \wedge E_m) \Big)_{\deg(J)\le d}^{h=\rho+1,\ldots,n},
\]
it is immediate to obtain the following equality
\begin{equation}
\begin{aligned}
\tilde{C}_j=& (\LMD_{hv})^t \left(\GT_{h} (C_j^H D_{hv}+C_j^{HV} D_v)+ \GT_{hv} C_j D_v\right)\\
&+ (\LMD_v)^t \left( (\GT_{hv})^t (C_j^H D_{hv}+C_j^{HV} D_v)+ \GT_v C_j D_v \right)\\
=&\LMD_v^{-1} C_j D_v .
\end{aligned}
\label{eq:tildeC}
\end{equation}
Let $\tilde{B}$ be the associated matrix 
\begin{align*}
\tilde{B}=\Big( \tilde{g} \Big( Y_{J_i} , \sum_{j=1}^m E_1 \wedge \ldots \wedge [E_j, Y_h] \wedge \ldots \wedge E_m \Big) \Big)^{h=\rho+1,\ldots,n}_{i=1,\ldots,\ell}.
\end{align*}
A straightforward computation shows
\begin{equation*}
\begin{aligned}
\tilde{B}=& (\LMD_{hv})^t \Big( \GT_h ( \Omega_h \, D_{hv}+\Omega_v D_v + \sum_{j=1}^m C_j^H E_j(D_{hv})+  C_j^{HV} E_j( D_h ) )\\
&+ \GT_{hv}( A D_{hv}+ B D_v+ \sum_{j=1}^m C_j E_j(D_v) )  \Big)\\
&+ (\LMD_v)^t \Big( \GT_{hv}^t ( \Omega_h \, D_{hv}+\Omega_v D_v + \sum_{j=1}^m C_j^H E_j(D_{hv})+  C_j^{HV} E_j( D_h ) )\\
&+ \GT_{v}( A D_{hv}+ B D_v+ \sum_{j=1}^m C_j E_j(D_v) )  \Big)\\
\end{aligned}
\end{equation*}
By Remark \ref{rk:MG} we obtain 
\begin{equation}
\label{eq:tildeB}
\tilde{B}=  \LMD_v^{-1} \, A \, D_{hv} + \LMD_v^{-1} B  D_v +\sum_{j=1}^m \LMD_v^{-1} C_j E_j (D_v) .
\end{equation}
Finally, we have 
$
G=D_h \tilde{G}+ D_{hv} \tilde{F}
$
and $F=D_v \tilde{F}$.

\begin{proposition}
Let $g$ and $\tilde{g}$ be two different metrics, then a vector fields $V$ is admissible w.r.t. $g$ if and only if $V$ is admissible w.r.t. $\tilde{g}$.
\end{proposition}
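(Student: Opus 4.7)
The plan is to write the admissibility system with respect to the metric $\tilde{g}$ in the form \eqref{eq:local system of PDEs3}, namely
\[
\sum_{j=1}^m \tilde{C}_j E_j(\tilde{F})+ \tilde{B}\tilde{F}+ \tilde{A}\tilde{G}=0,
\]
and then show that it is equivalent to the $g$-admissibility system $\sum_{j=1}^m C_j E_j(F)+BF+AG=0$ by direct substitution, using the identities for $\tilde{A}$, $\tilde{B}$ and $\tilde{C}_j$ already computed in \eqref{eq:tildeA}, \eqref{eq:tildeB} and \eqref{eq:tildeC}, together with the change of unknowns $G=D_h \tilde{G}+D_{hv}\tilde{F}$ and $F=D_v\tilde{F}$.

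More precisely, I would substitute these expressions into the $\tilde{g}$-system and factor out $\LMD_v^{-1}$, obtaining
\[
\LMD_v^{-1}\Bigl[\sum_{j=1}^m C_j D_v E_j(\tilde{F})+\sum_{j=1}^m C_j E_j(D_v)\tilde{F}+B D_v\tilde{F}+A D_{hv}\tilde{F}+A D_h\tilde{G}\Bigr]=0.
\]
The key observation is that by the Leibniz rule applied componentwise,
\[
\sum_{j=1}^m C_j E_j(F)=\sum_{j=1}^m C_j\bigl(E_j(D_v)\tilde{F}+D_v E_j(\tilde{F})\bigr),
\]
while $BF=B D_v\tilde{F}$ and $AG=A D_h\tilde{G}+A D_{hv}\tilde{F}$. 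Collecting these three identities shows that the bracketed expression is precisely $\sum_{j=1}^m C_j E_j(F)+BF+AG$.

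Since the change of basis is invertible and respects the adapted filtration, $\LMD_v$ is invertible. Hence
\[
\sum_{j=1}^m \tilde{C}_j E_j(\tilde{F})+\tilde{B}\tilde{F}+\tilde{A}\tilde{G}=\LMD_v^{-1}\Bigl(\sum_{j=1}^m C_j E_j(F)+BF+AG\Bigr),
\]
and therefore the two systems vanish simultaneously. This proves that admissibility of $V$ is independent of the Riemannian extension. The only mild obstacle is bookkeeping: making sure that the ``horizontal'' block $\tilde{C}_j^0$ vanishes (which is exactly the content of the computation preceding \eqref{eq:tildeC}, using the block-triangular structure of $D$ and the explicit form of $\GT$ in Remark~\ref{rk:MG}), so that only $\tilde{C}_j$, $\tilde{B}$, $\tilde{A}$ contribute to the $\tilde{g}$-admissibility system.
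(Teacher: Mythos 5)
Your proposal is correct and follows essentially the same route as the paper: substitute the identities \eqref{eq:tildeA}, \eqref{eq:tildeB}, \eqref{eq:tildeC} together with $F=D_v\tilde F$, $G=D_h\tilde G+D_{hv}\tilde F$, apply the Leibniz rule to $E_j(D_v\tilde F)$, and conclude that the two admissibility systems differ by left multiplication by the invertible matrix $\LMD_v^{-1}$. The paper's proof is exactly this computation, so there is nothing to add.
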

\begin{proof}
We remind that an admissible vector field 
$$V=\sum_{i=1}^{\rho} g_i X_i + \sum_{i=\rho+1}^n f_i X_i$$
w.r.t. $g$ satisfies 
\begin{equation}
\label{eq:admprop}
\sum_{j=1}^m C_j E_j(F) + BF +AG=0.
\end{equation}
By \eqref{eq:tildeA}, \eqref{eq:tildeB} and \eqref{eq:tildeC} we have 
\begin{equation}
\label{eq:admproptilde}
\begin{aligned}
&\sum_{j=1}^m \tilde{C}_j E_j(\tilde{F}) + \tilde{B}\tilde{F} +\tilde{A}\tilde{G}=\LMD_v^{-1} \Big( \sum_{j=1}^m C_j (D_v E_j(\tilde{F} )+ E_j(D_v) \tilde{F}) \\
& \quad +A \, D_{hv} \tilde{F} + A \, D_h \tilde{G}+ B D_v \tilde{F} \Big)=\LMD_v^{-1} \Big(\sum_{j=1}^m C_j E_j(F) + BF +AG\Big)
\end{aligned}
\end{equation}
In the previous equation we used that  $G=D_h \tilde{G}+ D_{hv} \tilde{F}$, $F=D_v \tilde{F}$ and 
\[
E_j(D_v) D_v^{-1} + D_v E_j(D_v^{-1})=0,
\]
for all $j=1,\ldots,m$, that follows by $D_v D_v^{-1}=I_{n-\rho}$. Then the admissibility system \eqref{eq:admprop} w.r.t. $g$  is equal to zero if and only if the admissibility system  \eqref{eq:admproptilde}  w.r.t. $\tilde{g}$.
\end{proof}

\begin{remark}
\label{rk:orthchange}
When the metric $g$ is fixed and $(X_i)$ and $(Y_i)$ are orthonormal adapted basis w.r.t $g$, the matrix $D$ is a block diagonal matrix given by
\[
D=\begin{pmatrix}
D_h & 0 \\
0 & D_v
\end{pmatrix},
\]
where $D_h$ and $D_v$ are square orthogonal matrices of orders $\rho$ and $(n-\rho)$, respectively. From equations \eqref{eq:tildeA}, \eqref{eq:tildeB}, \eqref{eq:tildeC} it is immediate to obtain the following equalities
\begin{equation}
\label{eq:tilde}
\begin{split}
\tilde{F}&=D_v^{-1} F,
\\
\tilde{G}&=D_h^{-1} G,
\\
\tilde{A}&=\mathbf{\Lambda}_v^{-1} \  A  \ D_h,
\\
\tilde{B}&=\mathbf{\Lambda}_v^{-1} B D_v+ \sum_{j=1}^m \mathbf{\Lambda}_{v}^{-1} C_j E_j (D_v),
\\
\tilde{C}_j&=\mathbf{\Lambda}_{v}^{-1}  C_j D_v.
\end{split}
\end{equation}
\end{remark}

\subsection{The admissibility system with respect to the intrinsic basis of the normal space}
\label{ss:intriniscadmsy}

Let $\ell$ be the dimension of $\Lambda_m^d(U)_p^{\perp}$ and $(X_{J_1},\ldots,X_{J_{\ell}})$ an orthonormal basis of simple $m$-vector fields. Let $\bar{p}_0$ be a point in $\bar{M}$ and $\Phi(\bar{p}_0)=p_0 $. Let $e_1,\ldots,e_m$ be an adapted basis of $T_{p_0} M$ that we extend to adapted vector fields  $E_1,\ldots,E_m$  tangent to $M$ on $U$.  Let $v_{m+1},\ldots,v_n$ be a basis of $(T_{p_0} M)^{\perp}$ that we extend to vector fields $V_{m+1}, \ldots,V_n$  normal to $M$ on $U$, where we possibly reduced the  neighborhood $U$ of $p_0$ in $N$.
  Then any vector field in $\mathfrak{X}(\Phi^{-1}(U) ,N)$ is given by 
\[
V=\sum_{j=1}^m \psi_j E_j+\sum_{h=m+1}^n \psi_h V_h ,
\] 
where $\psi_1,\ldots, \psi_n  \in  C^{r}( \Phi^{-1}(U) , \rr )$. By Proposition~\ref{prop:normaladm}  we deduce that $V$ is admissible if and only if $V^{\perp}=\sum_{h=m+1}^n \psi_h V_h$ is admissible. Hence we obtain that the system~\eqref{System of PDEs for admissible} is equivalent to 
\begin{equation}
\label{eq:normal system of PDEs}
  \sum_{j=1}^m \sum_{h=m+1}^n \xi_{i j h}  E_j (\psi_h)  +  \sum_{h=m+1}^n \hat{\beta}_{i h} \psi_h =0, \quad i=1,\ldots,\ell,
\end{equation}
where
\begin{equation}
\label{def:xicoef}
 \xi_{i j h}(\bar{p})= \escpr{e_1 \wedge \ldots \wedge \overset{(j)} {v_h} \wedge \ldots \wedge e_m, (X_{J_i})_p }
\end{equation}
and
\begin{equation}
\begin{aligned}
\label{def:betacoef}
\hat{\beta}_{i h}(\bar{p})&=\escpr{e_1\wedge \ldots \wedge e_m,\nabla_{v_h} X_{J_i}}+
\\ &\quad\qquad + \sum_{j=1}^m \escpr{e_1 \wedge \ldots \wedge \nabla_{e_j} V_h \wedge \ldots \wedge e_m, (X_{J_i})_p}\\
 &=\sum_{j=1}^m \escpr{e_1 \wedge \ldots \wedge [E_j, V_h] (p) \wedge \ldots \wedge e_m, (X_{J_i})_p}.
\end{aligned}
\end{equation}

\begin{definition}
\label{def:k}
Let $\iota_0(U)$ be the integer defined in \ref{def:rho}. Then we set $k:=n_{\iota_0}-\tilde{m}_{\iota_0}$.
\end{definition}
Assume that $k\ge1$, and write 
\[
V^{\perp}=\sum_{h=m+1}^{m+k} \phi_h \, V_h+ \sum_{r=m+k+1}^n \psi_r \,  V_r ,
\] 
and the local system \eqref{eq:normal system of PDEs} is equivalent to 
\begin{equation}
\label{eq:local system of PDEs2 normal}
  \sum_{j=1}^m \sum_{r=\rho+1}^n \xi_{i j r} \, E_j (\psi_r)  +  \sum_{r=\rho+1}^n \beta_{i r} \, \psi_r + \sum_{h=m+1}^{m+k} \alpha_{i h} \, \phi_h  =0,
\end{equation}
where $\xi_{i j r}$ is defined in \eqref{def:xicoef}  and, for $1\le i\le\ell$,
\begin{equation}
\label{def:albe}
\alpha_{ih}=\hat{\beta}_{ih},\quad \beta_{ir}=\hat{\beta}_ {ir},\ m+1\le h\le m+k,\ m+k+1\le r\le n.
\end{equation}
We denote by $B^{\perp}$ the $\ell \times (n-m-k)$ matrix whose entries are $\beta_{i r}$,  by $A^{\perp}$ the  $\ell \times k$ whose entries are $\alpha_{i h}$ and for every $j=1, \cdots m$ by $C_j^{\perp}$ the $\ell \times (n- m-k ) $ matrix with entries $(\xi_{ijh})_{h=m+k+1,\ldots ,n}^{i=1,\ldots,\ell} $
Setting 
\begin{equation}
\label{def:FGperp}
F^{\perp}=\begin{pmatrix} \psi_{m+k+1} \\ \vdots \\ \psi_n \end{pmatrix}, \quad G^{\perp}=\begin{pmatrix} \phi_{m+1} \\ \vdots \\ \phi_{m+k} \end{pmatrix}
\end{equation}
the admissibility system \eqref{eq:local system of PDEs} is given 
\begin{equation}
\sum_{j=1}^m C^{\perp}_j  E_j(F^{\perp})+ B^{\perp}F^{\perp}+ A^{\perp} G^{\perp}=0.
\label{eq:normal system of PDEs3}
\end{equation}

\begin{remark}
\label{rk:Atop}
We can define the matrices $A^\top$, $B^\top$, $C^\top$ with respect to the tangent projection $V^\top$ in a similar way to the matrices $A^\bot$, $B^\bot$, $C^\bot$. First of all we notice that the entries 
\[
 \xi^{\top}_{i j \nu}(\bar{p})= \escpr{e_1 \wedge \ldots \wedge \overset{(j)} {e_{\nu}} \wedge \ldots \wedge e_m, (X_{J_i})_p }
\]
for $i=1,\ldots,\ell$ and $j,\nu=1,\ldots,m$ are all equal to zero. Therefore the matrices $C^{\top}$ and $B^{\top}$ are equal to zero. On the other hand, $A^{\top}$ is the $(\ell \times m)$-matrix whose entries are given by
\[
\alpha^{\top}_{i \nu}(\bar{p})=\sum_{j=1}^m \escpr{e_1 \wedge \ldots \wedge [E_j, E_{\nu}] (p) \wedge \ldots \wedge e_m, (X_{J_i})_p}
\]
for $i=1, \ldots,\ell$ and $\nu=1,\ldots,m$. Frobenius Theorem implies that the Lie brackets $[E_j,E_\nu]$ are all tangent to $M$ for $j,\nu=1,\ldots,m$, and so all the entries of $A^{\top}$ are equal to zero.
\end{remark}

\section{Integrability of admissible vector fields}
\label{sc:integrability}
In general, given an admissible vector field $V$, the existence of an admissible variation with associated variational vector field $V$ is not guaranteed. The next definition is a sufficient condition to ensure the integrability of admissible vector fields.

\begin{definition}
\label{def:regular}
Let $\Phi:\bar{M}\to N$ be an immersion of degree $d$ of an $m$-dimensional manifold into a graded manifold endowed with a Riemannian metric $g$. Let $\ell=\dim(\Lambda_m^d(U)_q^\perp)$ for all $q\in N$ and $\rho=n_{\iota_0}$ set in \eqref{def:rho}. When $\rho \ge \ell$ we say that $\Phi$ is \emph{strongly regular} at $\bar{p} \in \bar{M}$ if 
\[
\text{rank} (A(\bar{p}))=\ell ,
\]
where $A$ is the matrix appearing in the admissibility system \eqref{eq:local system of PDEs3}.
\end{definition}

The rank of $A$ is independent of the local adapted basis chosen to compute the admissibility system \eqref{eq:local system of PDEs3} because of equations \eqref{eq:tilde}. Next we prove that strong regularity is a sufficient condition to ensure local integrability of admissible vector fields.

\begin{theorem}
\label{teor:locint}
Let $\Phi:\bar{M}\to N$ be a smooth immersion of an $m$-dimensional manifold   into an equiregular graded manifold $N$ endowed with a Riemannian metric $g$. Assume that the immersion  $\Phi$ of degree $d$ is strongly regular at $\bar{p}$. Then there exists an open neighborhood $W_{\bar{p}}$ of $\bar{p}$ such every admissible vector field $V$ with compact support on $W_{\bar{p}}$ is integrable.
\end{theorem}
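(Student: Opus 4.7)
The plan is to exploit the strong regularity $\mathrm{rank}(A(\bar p))=\ell$ to locally solve $\ell$ of the equations in the admissibility system \eqref{eq:local system of PDEs3} for $\ell$ of the horizontal control components, and then to construct the required admissible variation by an Implicit Function Theorem argument in which the remaining components of $V$ serve as free first-order data while the constrained components are produced nonlinearly so that the deformed image has degree $d$ for every small $t$.

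More precisely, I would first pick a neighborhood $W_{\bar p}\subset\bar M$ of $\bar p$ and a chart $U\subset N$ around $\Phi(\bar p)$ equipped with an adapted frame $(X_1,\ldots,X_n)$ producing \eqref{eq:local system of PDEs3}. By strong regularity, after permuting the first $\rho$ vectors of the frame, the matrix $A$ splits as $A=(A_1\mid A_2)$ with $A_1(\bar p)\in\rr^{\ell\times\ell}$ invertible; by continuity, $A_1$ stays invertible on a possibly smaller $W_{\bar p}$. Splitting the horizontal coefficients as $G=(G_1,G_2)$ with $G_1\in\rr^\ell$, the admissibility of $V$ reads
\[
A_1 G_1=-A_2 G_2-BF-\sum_{j=1}^m C_j E_j(F),
\]
so $G_1$ is algebraically determined by the remaining data. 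I would then parametrize candidate variations by letting the free part of the variation enter linearly in $t$ and leaving the constrained part as an unknown $H:\bar M\to\rr^\ell$ with $H=0$ at $t=0$: in the chart $U$, $\Gamma_{H,t}(\bar q)$ is defined as the point whose coordinate displacement from $\Phi(\bar q)$ has coefficients $(tF(\bar q),H(\bar q),tG_2(\bar q))$ along the vertical, constrained-horizontal and free-horizontal directions of the frame respectively.

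The central step is to set
\[
\mathcal P(H,t):=\pi_d\bigl((d\Gamma_{H,t})_*\bar e_1\wedge\cdots\wedge (d\Gamma_{H,t})_*\bar e_m\bigr),
\]
read in a fixed basis of the space of $m$-vectors of degree $>d$, and viewed as a $C^1$ map between Banach spaces of $C^k$ sections supported in a fixed compact set $K'\supset\mathrm{supp}(V)$ contained in $W_{\bar p}$. One has $\mathcal P(0,0)=0$ because $\Phi$ has degree $d$, while a direct calculation using the admissibility formula shows that $(\partial_H\mathcal P)(0,0)$ is the algebraic pointwise multiplication by the invertible matrix $A_1(\bar q)$, hence a Banach isomorphism. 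The IFT then produces a unique smooth curve $t\mapsto H(t)$ with $H(0)=0$ and $\mathcal P(H(t),t)\equiv 0$; differentiating this identity at $t=0$ and using the admissibility of $V$ forces $H'(0)=G_1$, so the variational vector field of the smooth variation $\Gamma_{H(t),t}$ is exactly $V$, and the support condition in $K'$ guarantees $\Gamma_{H(t),t}=\Phi$ outside $K'$. The main obstacle I foresee is the analytic setup of this IFT: one must choose function spaces in which $\mathcal P$ is genuinely of class $C^1$ and in which the derivatives $E_j(F)$ appearing in \eqref{eq:local system of PDEs3} do not enter the linearization $\partial_H\mathcal P$ (so that it remains algebraic multiplication), and one must then verify that the resulting $H(t)$ is smooth enough in both arguments to yield a bona fide $C^\infty$ admissible variation with the prescribed first-order behavior.
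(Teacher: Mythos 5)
Your proposal follows essentially the same route as the paper's proof: strong regularity yields an invertible $\ell\times\ell$ submatrix $\hat{A}$ of $A$ on a neighborhood of $\bar{p}$, the candidate variation is split into a part prescribed linearly in $t$ plus an unknown correction in the constrained horizontal directions, and the Implicit Function Theorem applies because the linearization of the degree-$>d$ projection in those directions is pointwise multiplication by $\hat{A}$ (the $C_j$ and $B$ terms only involve the vertical components), exactly as in the paper, where the function-space issue you flag is resolved by taking the vertical components in $C^r$ and the horizontal ones in $C^{r-1}$. The only cosmetic difference is that you solve for the whole constrained block $H(t)$ and verify $H'(0)=G_1$, whereas the paper perturbs $sV$ by a correction $Y_3(s)$ and verifies $Y_3'(0)=0$; these are equivalent.
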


\begin{proof}
Let $p=\Phi(\bar{p})$. First of all we consider an open neighborhood $U_{p} \subset N$ of $p$ such that an adapted orthonormal frame $(X_1,\ldots, X_n)$ is well defined.
Since $\Phi$ is strongly regular at $\bar{p}$ there exist indexes $h_1,\ldots, h_{\ell}$ in $\{1,\ldots,\rho\}$ such that the submatrix 
\[
\hat{A} (\bar{p} )=\begin{pmatrix}
a_{1h_1} (\bar{p} )& \cdots & a_{1 h_{\ell}}(\bar{p} )\\
\vdots & \ddots & \vdots\\
a_{\ell  h_1}(\bar{p} )& \cdots & a_{\ell h_{\ell}}(\bar{p} )
\end{pmatrix}
\]
is invertible. By a continuity argument there exists an open neighborhood $W_{\bar{p} } \subset \Phi^{-1}( U_p )$ such that $\det(\hat{A} (\bar{q}))\ne0$ for each $\bar{q} \in W_{\bar{p} }$.  

We can rewrite the system \eqref{eq:local system of PDEs3} in the form
\begin{equation}
\begin{pmatrix}
g_{h_1}\\
\vdots\\
g_{h_{\ell}}
\end{pmatrix}= -\hat{A}^{-1} 
\left( \sum_{j=1}^m C_j E_j (F) + B F +\tilde{A} \begin{pmatrix}
g_{i_1}\\
\vdots\\
g_{i_{\rho-\ell}}
\end{pmatrix} \right),
\label{eq:adm1}
\end{equation}
where $i_1, \ldots, i_{\rho-\ell}$ are the indexes of the columns of $A$ that do not appear in $\hat{A}$ and  $\tilde{A}$ is the $\ell \times (\rho-\ell)$ matrix given by the columns $i_1, \ldots, i_{\rho-\ell}$ of $A$. The vectors $(E_i)_i$ form an orthonormal basis of $T\bar{M}$ near $\bar{p}$.

On the neighborhood $W_{\bar{p}}$ we define the following spaces

\begin{enumerate}
\item $\mathfrak{X}_0^r(W_{\bar{p}},N)$, $r\ge0$ is the set of $C^r$ vector fields compactly supported on $W_{\bar{p}}$ taking values in $TN$.
\item $\mathcal{A}_0^r(W_{\bar{p}},N)=\{Y \in \mathfrak{X}_0^r(W_{\bar{p}},N)  :  Y=\sum_{s=1}^{\rho} g_{s} X_{s}\}$.
\item $\mathcal{A}_{1, 0}^r(W_{\bar{p}},N)=\{Y\in \mathcal{A}_0^r(W_{\bar{p}},N)  :  Y=\sum_{i=1}^{\ell} g_{h_i} X_{h_i}  \}.$
\item $\mathcal{A}_{2,0}^r(W_{\bar{p}},N)=\{Y \in \mathcal{A}_0^r(W_{\bar{p}},N) : \escpr{Y,X}=0 \ \forall \ X \in  \mathcal{A}_{1,0}^r(W_{\bar{p}},N)\}$.
\item $\mathcal{V}_0^r(W_{\bar{p}} ,N)=\{Y\in \mathfrak{X}^r(W_{\bar{p}},N) :  \escpr{ Y,X}=0 \ \forall X \in \mathcal{A}_0^r(W_{\bar{p}},N) \}
=\mathcal{A}_0^r(W_{\bar{p}},N)^{\perp}.
$
\item $\Lambda_0^r(W_{\bar{p}},N)=\{ \sum_{i=1}^{\ell} f_i X_{J_i}  :  f_i \in C_0^r(W_{\bar{p}}) \}.$
\end{enumerate} 

Given $r\ge 1$, we set
\[
E:=\mathcal{A}_{2,0}^{r-1}(W_{\bar{p}},N) \times \mathcal{V}_0^r(W_{\bar{p}} ,N) ,
\]
and consider the map
\begin{equation}
\label{eq:defG}
\mathcal{G}: E \times  \mathcal{A}_{1,0}^{r-1}(W_{\bar{p}},N) \to E \times \Lambda_0^{r-1}(W_{\bar{p}},N) ,
\end{equation}
defined by
\[
\mathcal{G}(Y_1,Y_2,Y_3)=(Y_1,Y_2,\mathcal{F}(Y_1+Y_2+Y_3)),
\]
where $\Pi_v$ is the projection in the space of $m$-forms with compact support in $W_{\bar{p}}$ onto $\Lambda^r(W_{\bar{p}},N)$, and
\[
\mathcal{F}(Y)=\Pi_v\left( d\Gamma(Y)(e_1) \wedge \ldots \wedge d\Gamma(Y)(e_m)\right),
\]
where $\Gamma(Y)(p)=\exp_{\Phi(p)}(Y_p)$. Observe that $\mathcal{F}(Y)=0$ if and only if the submanifold $\Gamma(Y)$ has degree less or equal to $d$. We consider on each space the corresponding $||\cdot||_r$ or $||\cdot||_{r-1}$ norm, and a product norm.

Then
\[
D\mathcal{G}(0,0,0)(Y_1,Y_2,Y_3)=(Y_1,Y_2,D\mathcal{F}(0)(Y_1+Y_2+Y_3)),
\]
where we write in coordinates
\[
Y_1=\sum_{t=1}^{\rho-\ell} g_{i_t} \, X_{i_t}, \quad Y_2=\sum_{i=1}^{\ell} g_{h_i} \, X_{h_i}, \quad \text{and} \quad Y_3=\sum_{r=\rho+1}^{n} f_{r} \, X_{r}.
\]
Following the same argument we used in Section \ref{sc:admissible}, taking the derivative at $t=0$ of \eqref{flow that preserves degree sub}, we deduce that the differential $D\mathcal{F}(0)Y$ is given by
\[
D\mathcal{F}(0)Y= \sum_{i=1}^{\ell}\Bigg(\sum_{j=1}^m \sum_{r=\rho+1}^n c_{i j r}  E_j (f_r)  +  \sum_{r=\rho+1}^n b_{i r} f_r + \sum_{h=1}^\rho a_{i h} g_h  \Bigg) X_{J_i}.
\] 

Oberve that $D\mathcal{F}(0)Y=0$ if and only if $Y$ is an admissible vector field, namely $Y$ solves \eqref{eq:adm1}.
\\

Our objective now is to prove that the map $D\mathcal{G}(0, 0,0)$ is an isomorphism of Banach spaces.

Indeed suppose that $D\mathcal{G}(0, 0,0)(Y_1,Y_2,Y_3)=(0,0,0)$. This implies that $Y_1$ and $Y_2$ are equal zero. By the admissible equation \eqref{eq:adm1} we have that also $Y_3$ is equal to zero, then $D\mathcal{G}(0, 0,0)$ is injective. Then fix $(Z_1, Z_2, Z_3)$, where $Z_1 \in \mathcal{A}_{2,0}^{r-1}(W_{\bar{p}},N)$, $ Z_2 \in  \mathcal{V}_0^r(W_{\bar{p}},N) $, $ Z_3 \in \Lambda_0^{r-1}(W_{\bar{p}},N) $ we seek $Y_1,Y_2,Y_3$ such that $D\mathcal{G}(0, 0,0)(Y_1,Y_2,Y_3)=(Z_1, Z_2, Z_3)$. We notice that $D\mathcal{F}(0)(Y_1+Y_2+Y_3)=Z_3$ is equivalent to
\begin{equation*}
\left(\begin{array}{c}
z_{1}\\
\vdots\\
z_{\ell}
\end{array}\right)= 
\left( \sum_{j=1}^m C_j \, E_j (F) +  B F +  \tilde{A} \left(\begin{array}{c}
g_{i_1}\\
\vdots \\
g_{i_{\rho-\ell}}
\end{array}\right) + \hat{A} \left(\begin{array}{c}
g_{h_1}\\
\vdots\\
g_{h_{\ell}}
\end{array}\right)  \right),
\end{equation*} 
where with an abuse of notation we identify $Z_3=\sum_{i=1}^{\ell} z_i \  X_{J_i}$ and $\sum_{i=1}^{\ell} z_i \  X_{h_i}$. Since $\hat{A}$ is invertible we have the following system
\begin{equation}
\left(\begin{array}{c}
g_{h_1}\\
\vdots\\
g_{h_{\ell}}
\end{array}\right)= -\hat{A}^{-1} 
\left( \sum_{j=1}^m C_j \, E_j (F) +  B F +  \tilde{A} \left(\begin{array}{c}
g_{i_1}\\
\vdots \\
g_{i_{\rho-\ell}}
\end{array}\right)  + \left(\begin{array}{c}
z_{1}\\
\vdots\\
z_{\ell}
\end{array}\right)  \right).
\label{eq:adm2}
\end{equation}
Clearly $Y_1=Z_1$ fixes $g_{i_1},\ldots, g_{i_{\rho-\ell}}$ in \eqref{eq:adm2}, and $Y_2=Z_2$ fixes the first and second term of the right hand side in \eqref{eq:adm2}. Since the right side terms are given we have determined  $Y_3 $, i.e. $g_{h_1},\ldots,g_{h_{\ell}}$,  such that $Y_3$ solves  \eqref{eq:adm2}. Therefore $D\mathcal{G}(0, 0,0)$ is surjective. Thus we have proved that $D\mathcal{G}(0,0,0)$ is a bijection.

Let us prove now that $D\mathcal{G}(0,0,0)$ is a continuous and open map. Letting $D\mathcal{G}(0,0,0)(Y_1,Y_2,Y_3)=(Z_1,Z_2,Z_3)$, we first notice $D\mathcal{G}(0, 0,0)$ is a continuous map since identity maps are continuous and, by \eqref{eq:adm2}, there exists a constant $K$ such that 
\begin{align*}
\| Z_3 \|_{r-1} &\le K \Bigg( \sum_{j=1}^m \| \nabla_j Y_2 \|_{r-1}+ \|Y_2\|_{r-1}+ \|Y_1\|_{r-1} + \|Y_3\|_{r-1}\Bigg)\\
                      &\le K( \| Y_2\|_{r} + \|Y_1\|_{r-1} + \| Y_3\|_{r-1}).
\end{align*}
Moreover, $D\mathcal{G}(0, 0,0)$ is an open map since we have 
\begin{align*}
\| Y_3 \|_{r-1} &\le K \Bigg(  \sum_{j=1}^m \| \nabla_j Z_2 \|_{r-1}+ \|Z_2\|_{r-1}+ \|Z_1\|_{r-1} + \|Z_3\|_{r-1}\Bigg)\\
                      &\le K ( \| Z_2\|_{r} + \|Z_1\|_{r-1} + \| Z_3\|_{r-1}).
\end{align*}
This implies that $D\mathcal{G}(0,0,0)$ is an isomorphism pf Banach spaces.
\\

Let now us consider an admissible vector field  $V$ with compact support on $W_p$. We consider the map
\[
\tilde{\mathcal{G}}:(-\eps,\eps) \times E \times  \mathcal{A}_{0,1}^{r-1}(W_{\bar{p}},N) \to E \times \Lambda_0^{r-1}(W_{\bar{p}},N),
\]
defined by
\[
\tilde{\mathcal{G}}(s,Y_1,Y_3,Y_2)=(Y_1,\mathcal{F}(sV+Y_1+Y_3+Y_2)).
\]
The map $\tilde{\mathcal{G}}$ is continuous with respect to the product norms (on each factor we put the natural norm, the Euclidean one on the intervals and $||\cdot||_r$ and $||\cdot||_{r-1}$ in the spaces of vectors on $\Phi(\bar{M})$). Moreover
\[
\tilde{\mathcal{G}}(0,0,0,0)=(0,0),
\]
since $\Phi$ has degree $d$. Denoting by $D_Y$ the differential with respect to the last three variables of $\tilde{G}$ we have that
\[
D_{Y}\tilde{\mathcal{G}}(0,0,0,0)(Y_1,Y_2,Y_3)=D\mathcal{G}(0,0,0)(Y_1,Y_2,Y_3)
\]
is a linear isomorphism. We can apply the Implicit Function Theorem to obtain unique maps
\begin{equation}
\label{def:y3}
\begin{split}
&Y_1:(-\eps,\eps) \to \mathcal{A}_{0,2}^{r-1}(W_{\bar{p}},N), 
\\ &Y_2:(-\eps,\eps) \to \mathcal{V}_0^{r}(W_{\bar{p}},N),
\\ &Y_3:(-\eps,\eps) \to \mathcal{A}_{0,1}^{r-1}(W_{\bar{p}},N),
\end{split}
\end{equation}
such that $\tilde{\mathcal{G}}(s,Y_1(s),Y_2(s),Y_3(s))=(0,0)$. This implies that $Y_1(s)=0$, $Y_2(s)=0$, $Y_3(0)=0$ and that
\[
\mathcal{F}(sV+Y_3(s))=0.
\]
Differentiating this formula at $s=0$ we obtain 
\[
D\mathcal{F}(0)\left( V+\dfrac{\partial Y_3}{\partial s} (0)\right)=0.
\]
Since $V$ is admissible we deduce 
\[
D\mathcal{F}(0) \dfrac{\partial Y_3}{\partial s} (0) =0.
\]
Since $\tfrac{\partial Y_3}{\partial s} (0)=\sum_{i=1}^{\ell} g_{h_i}X_{h_i}$, where $g_{h_i} \in C_0^{r-1}(W_{\bar{p}})$, equation \eqref{eq:adm1} implies $g_{h_i} \equiv 0$ for each $i=1,\ldots,\ell$. Therefore it follows $\tfrac{\partial Y_3}{\partial s} (0)=0$.

Hence the variation  $\Gamma_s(\bar{p})=\Gamma(s V+ Y_3(s))(\bar{p})$ coincides with $\Phi(\bar{q})$ for $s=0$ and $\bar{q}\in W_{\bar{p}}$, it has degree $d$ and its variational vector fields is given by 
\[
\dfrac{\partial \Gamma_s}{\partial s} \bigg|_{s=0}= V+ \dfrac{\partial Y_3}{\partial s} (0)=V.
\]
Moreover, $\text{supp}(Y_3) \subseteq \text{supp}(V) $. Indeed,  if $\bar{q} \notin \text{supp}(V)$,
the unique vector field $Y_3(s)$, such  $\mathcal{F}(Y_3(s))=0$, is equal to $0$ at $\bar{q}$.
\end{proof}

\begin{remark}
\label{rk:nsr}
In Proposition~\ref{prop:normaladm} we stressed the fact that a vector field $V= V^{\top}+ V^{\perp}$ is admissible if and only if $V^{\perp}$ is admissible. This follows from the additivity in $V$ of the admissibility system \eqref{System of PDEs for admissible} and the admissibility of $V^\top$. Instead of writing $V$ with respect to the adapted basis $(X_i)_i$ we consider the basis $E_1,\ldots,E_m,V_{m+1}, \ldots,V_n$ described  in Section~\ref{ss:intriniscadmsy}.

Let $A^{\perp}, B^{\perp}, C^{\perp}$ be the matrices defined in \eqref{def:albe}, $A^{\top}$ be the one described  in Remark~\ref{rk:Atop} and $A$ be the matrix with respect to the basis $(X_i)_i$ defined in \eqref{def:ab}. When we change only the basis for the vector field $V$ by \eqref{eq:tildeA} we obtain $\tilde{A}=A D_h$. Since $A^{\top}$ is the null matrix and   $\tilde{A}= (A^{\top} |\, A^{\perp})$ we conclude that $\text{rank}(A(\bar{p}))=\text{rank}(A^{\perp}(\bar{p}))$. Furthermore $\Phi$ is strongly regular at $\bar{p}$ if and only if $\text{rank}(A^{\perp}(\bar{p}))= \ell \le k$, where $k$ is the integer defined in \ref{def:k}.
\end{remark}

\subsection{Some examples of regular submanifolds}

\mbox{}

\begin{example}
Consider a hypersurface $\Sigma$ immersed in an equiregular Carnot manifold $N$, then  we have that $\Sigma$ always has degree $d$ equal to $d_{\max}^{n-1}=Q-1$, see ~ \ref{ssc:hyp}. Therefore the dimension $\ell$, defined in Section~\ref{sc:structure}, of $\Lambda_m^d(U)_p$ is equal to zero. Thus any compactly supported vector field $V$ is admissible and integrable.  When the Carnot manifold $N$ is a contact structure $(M^{2n+1}, \mh=\text{ker}(\omega))$, see \ref{contact geometry}, the hypersurface  $\Sigma$ has always degree equal to $d_{\text{max}}^{2n}=2n+1$.\end{example}

\begin{example}
Let $(E,\mh)$ be the Carnot manifold described in Section~\ref{sc:2jetspace} where $(x,y,\theta,k) \in \rr^2 \times \mathbb{S}^1 \times \rr=E$ and the distribution $\mh$ is generated by
\[
 X_1=\co \partial_x+\si \partial_y+ k \partial_{\theta}, \quad X_2= \partial_k.
\]
Clearly $(X_1,\ldots,X_4)$ is an adapted basis for $\mh$. Moreover the others no-trivial commutators are given by 
\begin{align*}
[X_1,X_4]&=-k X_1-k^2 X_3\\
[X_3,X_4]&=X_1+k X_3.
\end{align*}
Let $\Omega \subset \rr^2$ be an open set. We consider the surface $\Sigma=\Phi(\Omega)$ where
\[
 \Phi(x,y)=(x,y,\theta(x,y),\kappa(x,y))
\]
 and such that $X_1(\theta(x,y))=\kappa(x,y)$. Therefore the $\deg(\Sigma)=4$ and its tangent vectors are given by 
\begin{align*}
\tilde{e}_1=&X_1+ X_1(\kk) X_2,\\
\tilde{e}_2=&X_4-X_4(\theta)X_3+X_4(\kk)X_2.
\end{align*}
Let $g=\escpr{\cdot,\cdot}$ be the metric that makes orthonormal the adapted basis $(X_1,\ldots, X_4)$.  
Since   $(\Lambda_2^{4}(N))^{\perp}=\text{span}\{X_3 \wedge X_4\}$  the only no-trivial coefficient $c_{1 1r}$, for $r=3,4$ are given by
\[
\escpr{X_3\wedge \tilde{e}_2, X_3 \wedge X_4}=1, \quad  \text{and} \quad \escpr{X_4\wedge \tilde{e}_2, X_3 \wedge X_4}=X_4(\theta).
\]
On the other hand $c_{1 2 h}=\escpr{\tilde{e}_1 \wedge X_k, X_3 \wedge X_4}=0 $ for each $h=1,\ldots,4$, since we can not reach the degree $5$ if one of the two vector fields in the wedge has degree one. Therefore the only  equation in \eqref{eq:local system of PDEs} is given by
\begin{equation}
\tilde{e}_1(f_3)+ X_4(\theta) \tilde{e}_1(f_4)+ \sum_{h=1}^4 \left( \escpr{X_3 \wedge X_4, \tilde{e}_1\wedge [\tilde{e}_2,X_h]+  [\tilde{e}_1,X_h] \wedge \tilde{e}_2} \right) f_h=0.
 \label{eq:adm}
\end{equation}
Since $\deg(\tilde{e}_1\wedge [\tilde{e}_2,X_h]) \le 4$  we have  $\escpr{X_3 \wedge X_4, e_1\wedge [\tilde{e}_2,X_h]}=0$ for each $h=1,\ldots,4$. Since $[u X, Y]=u[X,Y]-Y(u) X$ for each $X,Y \in \mathfrak{X}(N) $ and $u \in C^{\infty}(N)$ we have 
\begin{align*}
 [\tilde{e}_1,X_h]&=[X_1,X_h]+X_1(\kk)[X_2,X_h]-X_h(X_1(\kk))X_2\\
 &=\begin{cases} -X_1(\kk)X_3 -X_1(X_1(\kk))X_2 & h=1\\
                            X_3- X_2(X_1(\kk))X_2 & h=2\\
                            X_4 -X_3(X_1(\kk))X_2 & h=3\\
                            -\kk X_1- \kk^2 X_3 -X_4(X_1(\kk))X_2     & h=4.
 \end{cases}
\end{align*}
Thus, we deduce 
\[
 \escpr{X_3\wedge X_4,[\tilde{e}_1,X_h] \wedge \tilde{e}_2}=\begin{cases} -X_1(\kk) & h=1\\
                            1 & h=2\\
                            X_4(\theta) & h=3\\
                            -\kk^2   & h=4.
 \end{cases}
\]
Hence the equation \eqref{eq:adm} is equivalent to
\begin{equation}
\tilde{e}_1(f_3)+ X_4(\theta) \tilde{e}_1(f_4)-X_1(\kk) f_1+ f_2-X_4(\theta) f_3- \kk^2 f_4=0
\label{eq:adm1tk}
\end{equation}
Since $\iota_0(\Omega)=1$, we have $\rho=n_{1}=2$, where $\rho$ is the natural number defined in \eqref{def:rho}. In this setting the matrix $C$ is given by 
\[
C=\left(\begin{array}{cccc} 1&0& X_4(\theta)& 0 \end{array} \right),
\]
Then the matrices  $A$ and $B$ are given by
\[
A=\left(\begin{array}{cc}-X_1(\kk)& 1 \end{array} \right),
\]
\[
B=\left(\begin{array}{cc}-X_4(\theta)& -\kk^2 \end{array} \right).
\]
Since  $\text{rank}(A(x,y))=1$ and the matrix $\hat{A}(x,y)$, defined in the proof of Theorem \ref{teor:locint},  is equal to $1$ for each $(x,y) \in \Omega$ we have that $\Phi$ is strongly regular at each point $(x,y)$  in $\Omega$ and the open set $W_{(x,y)}=\Omega$. Hence by Theorem  \ref{teor:locint} each admissible vector field on $\Omega$ is integrable.

On the other hand we notice that $k= n_1-\tilde{m}_1=1$. By the Gram-Schmidt process an orthonormal basis with respect to the metric $g$ is given by 
\begin{align*}
 e_1&= \dfrac{1}{\alpha_1} ( X_1+X_1(\kk)X_2),\\
 e_2&=\frac{1}{\alpha_2}\left(X_4- X_4(\theta)X_3+ \frac{X_4(\kk)}{\alpha_1^2} (X_2- X_1(\kk)X_1)\right),\\
 v_3&=\dfrac{1}{\alpha_3}(X_3+X_4(\theta)X_4),\\
  v_4&= \frac{\alpha_3}{\alpha_2 \alpha_1} \left( ( -X_1(\kk)X_1+X_2)+  \frac{X_4(\kk)}{\alpha_3^2} (X_4(\theta) X_3- X_4) \right),
\end{align*}
where we set
\begin{align*}
 \alpha_1&=\sqrt{1+X_1(\kk)^2}, \quad 
 \alpha_3=\sqrt{1+X_4(\theta)^2} \\ \alpha_2&=\sqrt{1+X_4(\theta)^2+\frac{X_4(\kk)^2}{(1+X_1(\kk)^2)}}=\frac{\sqrt{\alpha_1^2 \alpha_3^2+ X_4(\kk)^2}}{\alpha_1}.
\end{align*}
Since it holds  
\begin{align*}
&\escpr{ v_3\wedge e_2, X_3 \wedge X_4}=\frac{\alpha_3}{\alpha_2},\\
&\escpr{ v_4\wedge e_2, X_3 \wedge X_4}=0,\\
&\escpr{ [e_1,v_3] \wedge e_2, X_3 \wedge X_4}=\frac{X_4(\theta) (1-\kk^2)}{\alpha_1 \alpha_2 \alpha_3},\\
&\escpr{ [e_1,v_4]\wedge e_2, X_3 \wedge X_4}=\frac{ \alpha_3 }{\alpha_2}\left(1 + \frac{X_4(\kk)^2}{\alpha_1^2 \alpha_3^2}\right)=\frac{\alpha_2}{\alpha_3},
\end{align*}
then a vector field $V^{\perp}= \psi_3(x,y) \,  v_3 + \psi_4(x,y) \, v_4$ normal to $\Sigma$ is admissible if and only if $\psi_3, \psi_4 \in C_0^{r} (\Omega) $ verify
\begin{equation*}
\frac{\alpha_3}{\alpha_2} e_1(\psi_3)+ \frac{X_4(\theta) (1-\kk^2)}{\alpha_1\alpha_2 \alpha_3} \, \psi_3+ \frac{\alpha_2}{\alpha_3} \, \psi_4=0.
\end{equation*}
That is equivalent to 
\begin{equation}
\bar{X}_1 (\psi_3) + b^{\perp} \, \psi_3 + a^{\perp} \, \psi_4=0,
\label{eq:admnormal}
\end{equation}
where $\bar{X}_1= \cos(\theta(x,y))\partial_x + \sin(\theta(x,y)) \partial_y $ and 
\begin{align*}
b^{\perp}&=\frac{X_4(\theta)(1-X_1(\theta)^2)}{1+X_4(\theta)^2}, \\
a^{\perp}&=\alpha_1\left( 1+ \frac{X_4(\kk)^2}{\alpha_1^2 \alpha_3^2}\right).
\end{align*}
In particular, since $a^{\perp}(x,y)>0$  we have that  $\text{rank}(a^{\perp}(x,y))=1$ for all $(x,y) \in \Omega$.
Along the integral curve $\ga'(t)=\bar{X}_1$ on $\Omega$ the equation \eqref{eq:admnormal} reads 
\[
\psi'_3(t)+ b^{\perp}(t) \psi_3(t)+a^{\perp}(t) \psi_4(t)=0,
\]
where we set $f(t)=f(\ga(t))$ for each function $f:\Omega \to \rr$.
\end{example}

\begin{remark}
\label{rk:regularGP}
Let $(N,\mh)$ be a Carnot manifold such that $\mh=\text{ker}(\theta)$ where $\theta$ is a $\rr^{n-\ell}$ one form. Following \cite{Gromov, Pansu16} we say that an  immersion  $\Phi:\bar{M} \to N$ is horizontal when  the pull-back $\Phi^{*} \theta =0$ and, given a point $p \in \Phi(\bar{M})$, the subspace $T_p M \subset \mh_p$ is regular if the map 
\begin{equation}
\label{eq:ontoP}
V \to (\iota_V d \theta)_{|T_p M}
\end{equation}
is onto for each horizontal vector $V$ on $\bar{M}$. Let $X$ be an horizontal extension of $V$ on $N$ and  $Y$ be another horizontal vector field on $N$, then 
\[
d \theta(X,Y)=X(\theta(Y))-Y(\theta(X))-\theta([X,Y])=-\theta([X,Y])
\]
Assume that the local  frame $E_1,\ldots,E_m$ generate $T_p M$ at $p$ then the map \eqref{eq:ontoP} is given by
$
\theta([X,E_j] (p)),
$
for each $j=1,\ldots,m$. In \cite[Section 3]{MR4118581} the author notice that there exist special coordinates adjusted to the admissibility system such that the entries of the control matrix $A$ are $a_{ijh}=\escpr{V_i, [E_j,V_h]}$, where $V_{m+1},\ldots,V_n$ are vector fields in the normal bundle.
In this notation the surjectivity of this map coincides with the pointwise condition of maximal rank of the matrix $(a_{ijh})$. Since by equation \eqref{eq:tilde}  the rank of $A$ is independent of the metric $g$ we deduce that this regularity notion introduced by \cite{Gromov, Gromov86} is equivalent to strongly regularity at $\bar{p}$ (Definition \ref{def:regular}) for the class of horizontal immersions.
\end{remark}

\subsection{An isolated plane in the Engel group}
\begin{definition}
We say that an immersion $\Phi: \bar{M} \to N$ in an equiregular graded manifold $(N,\mh^1 \subset \ldots \subset \mh^s)$ is \emph{isolated} if the only admissible variation normal to $M=\Phi(\bar{M})$ is the trivial one.
\end{definition}

Here we provide an example of isolated surface immersed in the Engel group.

\begin{example}
\label{ex:isolated}
Let $N=\rr^4$ and $\mh=\text{span}\{X_1,X_2\}$, where 
\[
 X_1=\partial_{x_1}, \quad X_2=\partial_{x_2}+ x_1\partial_{x_3}+x_3 \partial_{x_4}
\]
and $X_3=\partial_{x_3}$ and $X_4=\partial_{x_4}$. We denote by $\mathbb{E}^4$ the Engel group given by $(\rr^4,\mh)$. Let $\Upsilon:  \Omega \subset \rr^2 \to \mathbb{E}^4$ be the immersion given by 
\[
 \Upsilon(v,\omega)=(v,0,\omega,0).
\]
Since $\Upsilon_v \wedge \Upsilon_w=X_1 \wedge X_3$ the degree $\deg(\Sigma)=3$, where  $\Sigma=\Upsilon(\Omega)$ is a plane. An admissible vector field $V=\sum_{k=1}^4 f_k X_k$ verifies the system \eqref{eq:local system of PDEs} that is given by
\begin{equation}
\label{eq:admisyst}
\begin{aligned}
 \sum_{h=1}^4 &  \ \dfrac{\partial f_h}{\partial x_1} \escpr{X_h \wedge X_3, X_{J_i}}+ \dfrac{\partial f_h}{\partial x_3}\escpr{X_1 \wedge X_h, X_{J_i}}+\\
 &+f_h \left( \escpr{[X_1,X_h] \wedge X_3,X_{J_i}}+\escpr{X_1 \wedge [X_3,X_h],X_{J_i}} \right)=0,
\end{aligned}
\end{equation}
for $X_{J_1}=X_1 \wedge X_4$, $X_{J_2}=X_2 \wedge X_4$ and $X_{J_3}=X_3 \wedge X_4$. Therefore \eqref{eq:admisyst} is equivalent to 
\[
 \begin{cases}
  &\dfrac{\partial f_4}{\partial x_3}+ f_2=0\\
  & 0=0\\
  &-\dfrac{\partial f_4}{\partial x_1}=0.
 \end{cases}
\]
Let $K=\text{supp} (V)$. First of all we have $\frac{\partial f_4}{\partial x_1}=0$. Since  $f_4 \in C^{\infty}(\Omega)$ there follows 
\[
 \dfrac{\partial f_2}{\partial x_1}=-\dfrac{\partial^2 f_4}{\partial x_3 \partial x_1}=0.
\]
Then let $(x_1,x_2) \in K$ we consider the curve 
$$\ga:s \mapsto (x_1+s,x_3)$$
 along which $f_4$ and $f_2$ are constant. Since $f_4$ and $f_2$ are compactly supported at the end point, $(x_1+s_0,x_3) \in \partial K$ we have  $f_4(x_1+s_0,x_3)=f_2(x_1+s_0,x_3)=0$. Therefore we gain $f_4=f_2\equiv0$.
Therefore the only admissible vector fields $f_1X_1+ f_3 X_3$ are tangent to $\Sigma$. Assume that there exists an admissible variation $\Gamma_s$ for $\Upsilon$, then its associated variational vector field is admissible. However we proved that the only admissible vector fields are tangent to $\Sigma$, therefore the admissible variation $\Gamma_s$ has to be tangent to $\Sigma$ and the only normal one a trivial variation, hence we conclude that the plane $\Sigma$ is isolated.

Moreover, we have that $k=1$ and the matrix $A^{\perp}$ defined in \ref{def:regular} is given by 
\[
A(u,w)=\left( 
\begin{array}{cc}
 -1 \\
0 \\
0 \\
\end{array} \right).
\]
Since $\text{rank}(A)=1< 3$ we deduce that $\Upsilon$ is not strongly regular at any point in $\Omega$.
\end{example}

In analogy with the rigidity result by \cite{MR1240644}, here we prove that $\Sigma$ is isolated without using the admissibility system. This also implies that the plane $\Sigma$ is rigid in the $C^1$ topology.

\begin{proposition}
\label{prop:rigid}
 Let $\mathbb{E}^4$ be the Engel group given by $(\rr^4,\mh)$,  where the distribution $\mh$ is generated by
\[
 X_1=\partial_{x_1}, \quad X_2=\partial_{x_2}+ x_1\partial_{x_3}+x_3 \partial_{x_4}.
\]
Let $\Omega \subset \rr^2$ be a bounded open set. Then the immersion  $\Upsilon: \Omega \to \mathbb{E}^4$ of degree $3$  given by 
\[
 \Upsilon(v,w)=(v,0,w,0)
\]
is isolated.
\end{proposition}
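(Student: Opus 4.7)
The plan is to characterise the condition $\deg(\Gamma(\Omega)) \le 3$ for immersions $\Gamma \colon \Omega \to \mathbb{E}^4$ by a single exterior differential identity, then linearise that identity along an admissible variation of $\Upsilon$ and exploit the compact support of the variation. Let $(\theta^1, \theta^2, \theta^3, \theta^4)$ denote the coframe dual to $(X_1, X_2, X_3, X_4)$; a direct computation gives $\theta^4 = dx_4 - x_3 \, dx_2$. The degrees of the adapted basis are $1,1,2,3$, so the only simple bivectors of degree strictly greater than $3$ are $X_1 \wedge X_4$, $X_2 \wedge X_4$ and $X_3 \wedge X_4$, each involving $X_4$. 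Consequently, an immersion $\Gamma = (\gamma_1, \gamma_2, \gamma_3, \gamma_4)$ has $\deg(\Gamma(\Omega)) \le 3$ if and only if its tangent plane lies in $\ker \theta^4$ at every point, which for a $2$-dimensional immersion is equivalent to $\Gamma^* \theta^4 \equiv 0$, or explicitly
\[
d\gamma_4 = \gamma_3 \, d\gamma_2.
\]

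Next, let $\Gamma_s$ be any admissible variation of $\Upsilon$ and write $V_j := \partial_s \gamma_j^s\big|_{s = 0}$ for the components of the variational vector field in the Euclidean frame $(\partial_{x_1},\ldots,\partial_{x_4})$. Imposing the characterisation above on every $\Gamma_s$ and differentiating at $s = 0$, while using $\gamma_2^0 = 0$ and $\gamma_3^0 = w$, produces the first-order system
\[
\partial_v V_4 = w \, \partial_v V_2, \qquad \partial_w V_4 = w \, \partial_w V_2
\]
on $\Omega$. Cross-differentiating and equating mixed partial derivatives yields $\partial_v V_2 \equiv 0$. Hence $V_2$ is constant in $v$ on every horizontal slice of $\Omega$; since $V$ has compact support inside the open set $\Omega$, each slice meeting $\mathrm{supp}(V)$ must contain points close to $\partial \Omega$ where $V_2$ vanishes, forcing $V_2 \equiv 0$. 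Feeding this back gives $\partial_v V_4 = \partial_w V_4 = 0$, so $V_4$ is locally constant; compact support then forces $V_4 \equiv 0$ as well.

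With $V_2 = V_4 \equiv 0$, at every point of $\Sigma = \Upsilon(\Omega)$ we obtain $V = V_1 \, \partial_{x_1} + V_3 \, \partial_{x_3} = V_1 X_1 + V_3 X_3$, which lies in $T\Sigma = \mathrm{span}\{X_1, X_3\}$. Thus every admissible variational vector field of $\Upsilon$ is tangent to $\Sigma$, which forces the normal component of any admissible variation to vanish; therefore $\Sigma$ is isolated. Note that the same argument, applied to $\Gamma$ itself rather than to its linearisation, shows that every $C^1$-small immersion of degree $3$ near $\Upsilon$ still satisfies $d\gamma_4 = \gamma_3 \, d\gamma_2$, from which $C^1$-rigidity of $\Sigma$ follows.

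The main delicate point is the passage from the differential identity $\partial_v V_2 \equiv 0$ to the global conclusion $V_2 \equiv 0$. This step genuinely requires the compact support of the variation inside the open set $\Omega$: on each horizontal segment of $\Omega$ meeting $\mathrm{supp}(V)$, one must use $v$-independence together with the vanishing of $V_2$ near the boundary of that segment to propagate the vanishing to the whole segment. Without compact support, functions depending only on $w$ would provide genuine obstructions and the argument would collapse; this is precisely the phenomenon that makes $\Upsilon$ an \emph{isolated} immersion rather than merely a critical point of the degree-preserving constraint.
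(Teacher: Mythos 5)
Your characterization of the degree constraint via the dual coframe is correct and is a clean repackaging of what the paper does: $\theta^4=dx_4-x_3\,dx_2$, the only simple $2$-vectors of degree $>3$ all contain $X_4$, and for a two-dimensional immersion the vanishing of all three corresponding components is equivalent to $T\Sigma\subset\ker\theta^4$, i.e.\ $\Gamma^*\theta^4=0$. Likewise, the cross-differentiation of $dV_4=w\,dV_2$ together with the use of compact support along the horizontal slices of $\Omega$ is exactly the mechanism of the paper's argument (there it appears as the compatibility condition $\psi_{wv}=\psi_{vw}$ forcing $\phi_v\equiv 0$).

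However, your main argument proves the wrong statement. ``Isolated'' is defined in the paper as: the only admissible variation normal to $\Sigma$ is the trivial one --- a statement about the finite deformations $\Gamma_s$, not about their velocity at $s=0$. What you establish is that every admissible vector field is tangent to $\Sigma$; this is the content of Example~\ref{ex:isolated} (your system $\partial_v V_4=w\,\partial_v V_2$, $\partial_w V_4=w\,\partial_w V_2$ is the admissibility system of that example rewritten in the Euclidean frame). It implies that a normal admissible variation has vanishing variational vector field at $s=0$, but not that $\Gamma_s=\Upsilon$ for every $s$: you cannot iterate the infinitesimal argument at $s_0\neq 0$ without re-deriving the admissibility system on the unknown deformed surface. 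The paper introduces Proposition~\ref{prop:rigid} precisely to prove isolatedness \emph{without} the admissibility system, and to get the stronger $C^1$-rigidity. The fix is the one-sentence remark you append at the end, and it should be the body of the proof: write the normal deformation, for each fixed $s$, as a compactly supported graph $\Phi(v,w)=(v,\phi(v,w),w,\psi(v,w))$; the degree-three condition $\Phi^*\theta^4=0$ then reads $d\psi=w\,d\phi$ \emph{exactly}, not merely to first order, because $\gamma_1=v$ and $\gamma_3=w$ are frozen by the graph parametrization and the constraint is already linear in $(\phi,\psi)$. Your cross-differentiation and compact-support argument applied verbatim then gives $\phi\equiv\psi\equiv 0$, i.e.\ $\Gamma_s=\Upsilon$, which is the paper's proof. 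As written, with the nonlinear step only gestured at, the proposal does not yet establish the proposition.
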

\begin{proof}
An admissible normal variation $\Gamma_s$ of $\Upsilon$ has to have the same degree of $\Upsilon$ and has to share the same boundary  $\Upsilon(\partial \Omega)= \partial \Sigma$, where clearly $\Sigma=\Upsilon(\Omega)$. For a fix $s$, we can parametrize  $\Gamma_s$ by
\[
\Phi: \Omega \to  \mathbb{E}^4, \quad \Phi(v,w)=(v,\phi(v,w),w, \psi(v,w)),
\]
where $\phi,\psi \in C_0^1(\Omega,\rr)$. Since $\deg(\Phi(\Omega))=3$ we gain 
\begin{equation}
\label{eq:system}
\begin{cases}
\escpr{\Phi_v \wedge \Phi_w, X_1 \wedge X_4}=0\\
\escpr{\Phi_v \wedge \Phi_w, X_2 \wedge X_4}=0\\
\escpr{\Phi_v \wedge \Phi_w, X_3 \wedge X_4}=0,\\
\end{cases}
\end{equation}
where 
\[
\Phi_v=\partial_{1}+ \phi_v \partial_2 + \psi_v \partial_4= X_1 + \phi_v (X_2-v X_3+wX_4)+ \psi_v X_4
\]
and
\[
\Phi_w= \phi_w \partial_2 +\partial_3 +\psi_w \partial_4=\phi_w(X_2-vX_3+wX_4)+X_3+\psi_w X_4.
\]
Denoting by $\pi_{4}$ the projection over the $2$-vectors of degree larger  than $3$, we have 
\begin{align*}
\pi_4(\Phi_v \wedge \Phi_w)=&(\psi_w+w \phi_w) X_1 \wedge X_4+ \phi_v(\psi_w+w \phi_w) X_2 \wedge X_4\\
&-v \phi_v(\psi_w+w \phi_w) X_3 \wedge X_4 
+ \phi_w (\psi_v+w \phi_v) X_4\wedge X_2 \\
&+ (1-v\phi_w)(\psi_v+w \phi_v) X_4\wedge X_3.
\end{align*}
Therefore \eqref{eq:system} is equivalent to 
\begin{equation}
\label{eq:system2}
\begin{cases}
\psi_w+w \phi_w=0\\
\phi_v \psi_w - \psi_v \phi_w=0\\
v(\phi_v \psi_w - \psi_v \phi_w)- (\psi_v+w\phi_v)=0.\\
\end{cases}
\end{equation}
The second equation implies that \eqref{eq:system2} is equivalent to 
\begin{equation}
\label{eq:system3}
\begin{cases}
\psi_w+w \phi_w=0\\
\phi_v \psi_w - \psi_v \phi_w=0\\
\psi_v+w\phi_v=0.\\
\end{cases}
\end{equation}
Then we notice that the first and the third equations implies the second one as it follows 
\[
\phi_v \psi_w - \psi_v \phi_w= -\phi_v w\phi_w+w \phi_v \phi_w=0.
\]
Therefore the immersion $\Phi$ has degree three if and only if 
\begin{equation}
\begin{cases}
\psi_w=-w \phi_w\\
\psi_v=-w\phi_v.\\
\end{cases}
\end{equation}
Only when the compatibility conditions (\cite[Eq. (1.4), Chapter VI]{Hartman}) for linear system of first order are given we have a solution of this system. However the compatibility condition is given by 
\[
0=\psi_{w v}-\psi_{vw}= \phi_v
\]
Since $\phi \in C_0^1(\Omega)$ we obtain $\phi\equiv0$. Therefore also $\psi_v=0$, then $\psi\equiv0$.
Hence $\Phi=\Upsilon$.
\end{proof}

\section{First variation formula for submanifolds}
\label{sc:first}

In this section we shall compute a first variation formula for the area $A_d$ of a submanifold of degree $d$. We shall give some definitions first. Assume that $\Phi:\bar{M}\to N$ is an immersion of a smooth $m$-dimensional manifold into an $n$-dimensional equiregular graded manifold endowed with a Riemannian metric $g$. Let $\mu=\Phi^*g$. Fix $\bar{p}\in\bar{M}$ and let $p=\Phi(\bar{p})$. Take a $\mu$-orthonormal basis $(\bar{e}_1,\ldots,\bar{e}_m)$ in $T_{\bar{p}}\bar{M}$ and define $e_i:=d \Phi_{\bar{p}}(\bar{e}_i)$ for $i=1,\ldots,m$. Then the degree $d$ area density $\Theta$ is defined by
\begin{equation}
\label{eq:density}
\Theta(\bar{p}):=|(e_1\wedge\ldots\wedge e_m)_d|=\bigg(\sum_{\deg(X_J)=d} \escpr{e_1\wedge\ldots\wedge e_m,(X_J)_p}^2\bigg)^{1/2},
\end{equation}
where $(X_1, \ldots, X_n)$ is an orthonormal  adapted basis of $TN$. Then we have
\[
A_d(M)=\int_{\bar{M}}\Theta(\bar{p})d\mu(\bar{p}).
\]

Assume now that $V\in\mathfrak{X}(\bar{M},N)$, then we set
\begin{equation}
\label{eq:degddiv}
(\divv_{\bar{M}}^d V)(\bar{p}):=\sum_{i=1}^m \escpr{e_1\wedge\ldots\wedge\nabla_{e_i}V\wedge\ldots\wedge e_m, (e_1\wedge\ldots\wedge e_m)_d}.
\end{equation}

Finally, define the linear function $f$ by
\begin{equation}
\label{eq:deff}
f(V_{\bar{p}}):=\sum_{\deg(X_J)=d}\escpr{e_1\wedge\ldots\wedge e_m,\nabla_{V_{\bar{p}}} X_J}\escpr{e_1\wedge\ldots\wedge e_m,(X_J)_{\bar{p}}}.
\end{equation}

Then we have the following result

\begin{theorem}
\label{thm:1st}
Let $\Phi:\bar{M}\to N$ be an immersion of degree $d$ of a smooth $m$-dimensional manifold into an equiregular graded manifold equipped with a Riemannian metric $g$. Assume that there exists an admissible variation $\Gamma:\bar{M}\times (-\eps,\eps)\to N$ with associated variational field $V$ with compact support. Then
\begin{equation}
\label{eq:1stvar}
\frac{d}{dt}\bigg|_{t=0} A_d(\Gamma_t(\bar{M}))=\int_{\bar{M}} \frac{1}{\Theta(\bar{p})}\,\big((\divv_{\bar{M}}^d V)(\bar{p})+f(V_{\bar{p}})\big) d\mu(\bar{p}).
\end{equation}
\end{theorem}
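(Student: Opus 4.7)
The plan is to parametrize the area as an integral over the fixed domain $\bar M$ and differentiate under the integral sign. First I would fix a $\mu$-orthonormal local frame $(\bar e_1,\ldots,\bar e_m)$ on $\bar M$ (independent of $t$) and let $e_i(t):=d\Gamma_t(\bar e_i)$, so that $e_i(0)=e_i=d\Phi(\bar e_i)$. A short change-of-basis computation shows that
\[
A_d(\Gamma_t(\bar M))=\int_{\bar M}\big|\big(e_1(t)\wedge\cdots\wedge e_m(t)\big)_d\big|_g\,d\mu(\bar p),
\]
because the Jacobian $|\det C(t)|$ relating a $\Gamma_t^*g$-orthonormal frame to $\bar e_i$ appears both in the integrand (via the wedge) and in the density ratio $d\mu_t/d\mu$, and cancels. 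Since $V$ has compact support and $\Gamma$ is smooth, differentiation under the integral is justified by the dominated convergence theorem.

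Next, for fixed $\bar p$, I would expand in the orthonormal adapted basis $(X_1,\ldots,X_n)$ of $N$,
\[
\big(e_1(t)\wedge\cdots\wedge e_m(t)\big)_d=\sum_{\deg(X_J)=d}\sigma_J(t)\,(X_J)_{\Gamma_t(\bar p)},\qquad \sigma_J(t):=\big\langle e_1(t)\wedge\cdots\wedge e_m(t),X_J\circ\Gamma_t\big\rangle,
\]
so that $\big|(e_1(t)\wedge\cdots\wedge e_m(t))_d\big|_g^2=\sum_{\deg(X_J)=d}\sigma_J(t)^2$. Differentiating this scalar identity at $t=0$ and using $\sigma_J(0)=\langle e_1\wedge\cdots\wedge e_m,X_J\rangle=:\tau_J$, one gets
\[
\frac{d}{dt}\bigg|_{t=0}\big|(e_1(t)\wedge\cdots\wedge e_m(t))_d\big|_g=\frac{1}{\Theta(\bar p)}\sum_{\deg(X_J)=d}\tau_J\,\sigma_J'(0),
\]
provided $\Theta(\bar p)\neq0$ (which is the generic situation for points outside the singular set; on the singular set the formula holds by a direct check since both sides vanish, or by continuity).

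The key step is the computation of $\sigma_J'(0)$. Viewing $X_J\circ\Gamma_t$ as a vector field along the curve $t\mapsto\Gamma_t(\bar p)$ and using compatibility of the Levi-Civita connection $\nabla$ with $g$, together with the commutation $\nabla_{\partial_t}d\Gamma_t(\bar e_i)\big|_{t=0}=\nabla_{\bar e_i}V=\nabla_{e_i}V$ (which holds because $[\partial_t,\bar e_i]=0$ on $\bar M\times(-\eps,\eps)$), I obtain
\[
\sigma_J'(0)=\sum_{i=1}^{m}\big\langle e_1\wedge\cdots\wedge\nabla_{e_i}V\wedge\cdots\wedge e_m,X_J\big\rangle+\big\langle e_1\wedge\cdots\wedge e_m,\nabla_V X_J\big\rangle.
\]
Multiplying by $\tau_J$, summing over $\{J:\deg(X_J)=d\}$, and recognizing the two resulting sums as $(\divv_{\bar M}^d V)(\bar p)$ (by linearity of the wedge-pairing and the expansion $(e_1\wedge\cdots\wedge e_m)_d=\sum\tau_J X_J$) and as $f(V_{\bar p})$ respectively, yields exactly the integrand of \eqref{eq:1stvar}.

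\textbf{Main obstacle.} The delicate point is keeping track of derivatives of $m$-vector fields along the variation: both the factors $e_i(t)$ and the adapted basis $X_J\circ\Gamma_t$ move with $t$, so one must argue carefully (either by choosing an appropriate connection on $m$-vectors or by working scalar-wise with the coefficients $\sigma_J(t)$) to justify the interchange $\nabla_{\partial_t}d\Gamma_t(\bar e_i)=\nabla_{e_i}V$ and to split contributions cleanly into a ``$\nabla V$ piece'' (giving $\divv_{\bar M}^d V$) and a ``$\nabla X_J$ piece'' (giving $f(V)$). No admissibility hypothesis on $V$ is used to derive the formula itself; admissibility only ensures that $\Gamma_t$ has degree $d$ so that $A_d(\Gamma_t(\bar M))$ is computed by the same integral formula for all small $t$.
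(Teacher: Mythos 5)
Your proposal follows essentially the same route as the paper's proof: differentiate under the integral over the fixed domain $\bar M$, write the degree-$d$ density as $\big(\sum_{\deg(X_J)=d}\sigma_J(t)^2\big)^{1/2}$ with $\sigma_J(t)=\langle \mathcal{E}_1(t)\wedge\cdots\wedge\mathcal{E}_m(t),X_J\rangle$, compute $\sigma_J'(0)$ via metric compatibility of $\nabla$ and the symmetry $\nabla_{\partial_t}d\Gamma_t(\bar e_i)=\nabla_{e_i}V$, and recognize the two resulting sums as $\divv_{\bar M}^d V$ and $f(V)$. Your additional remarks on the Jacobian cancellation and on points where $\Theta$ vanishes are sound refinements of details the paper leaves implicit, and the argument is correct.
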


\begin{proof}
Fix a point $\bar{p}\in\bar{M}$. Clearly, $\mathcal{E}_i(t,\bar{p})=d \Gamma_{(\bar{p},t)}(\bar{e}_i)$, $i=1,\ldots,m$, are vector fields along the curve $t\mapsto \Gamma(\bar{p},t)$. Therefore, the first variation is given by
\begin{align*}
\frac{d}{dt}\bigg|_{t=0} A(\Gamma_t(\bar{M}))&=\int_{\bar{M}} \frac{d}{dt}\bigg|_{t=0} 
 |\left( \mathcal{E}_1(t)\wedge \ldots \wedge \mathcal{E}_m(t) \right)_d| d\mu(\bar{p}) \\
 &=\int_{\bar{M}} \frac{d}{dt}\bigg|_{t=0} 
 \Bigg( \sum_{\text{deg}(X_J)=d} \langle \mathcal{E}_1(t)\wedge \ldots \wedge \mathcal{E}_m(t), X_J\rangle^2 \Bigg)^{\frac{1}{2}} d\mu(\bar{p}).
\end{align*}
The derivative of the last integrand is given by
\begin{multline*}
\dfrac{1}{| (e_1\wedge \ldots \wedge e_m)_d|}
  \sum_{\text{deg}(X_J)=d} \langle e_1\wedge \ldots \wedge e_m, (X_J)_p\rangle\ \times \\
\times \Bigg(\langle e_1\wedge \ldots \wedge e_m, \nabla_{V_{\bar{p}}} X_J\rangle + \sum_{i=1}^m \langle e_1\wedge \ldots \wedge \nabla_{e_i} V \wedge \ldots \wedge e_m ,  (X_J)_p\rangle  \Bigg).
\end{multline*}
Using \eqref{eq:degddiv} and \eqref{eq:deff} we obtain \eqref{eq:1stvar}.
\end{proof}

\begin{definition}
Let $\Phi:\bar{M}\to N$ be an immersion of degree $d$ of a smooth $m$-dimensional manifold into an equiregular graded manifold equipped with a Riemannian metric $g$. We say that $\Phi$ is $A_d$-stationary, or simply stationary, if it is a critical point of the area $A_d$ for any admissible variation.
\end{definition}

\begin{proposition}
\label{pr:tangentialvariation}
Let $\Phi:\bar{M}\to N$ be an immersion of degree $d$ of a smooth $m$-dimensional manifold into an equiregular graded manifold equipped with a Riemannian metric $g$. Let $\Gamma_t$ be admissible variation whose variational field $V=V^{\top}$ is compactly supported and tangent to $M=\Phi(\bar{M})$. Then we have 
\[
\frac{d}{dt}\bigg|_{t=0} A_d(\Gamma_t(\bar{M}))=0.
\]
\end{proposition}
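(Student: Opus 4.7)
The plan is to exploit the fact that the right-hand side of the first variation formula \eqref{eq:1stvar} depends only on the variational vector field $V$ itself, not on the higher-order behaviour of the variation $\Gamma_t$ in $t$; this is transparent from the proof of Theorem~\ref{thm:1st}, where only $V_{\bar p}=\tfrac{\ptl\Gamma}{\ptl t}(\bar p,0)$ and its first covariant derivatives along $\bar M$ enter the computation. Therefore it suffices to produce \emph{any} admissible variation $\Gamma'$ with the same variational field $V=V^\top$ and verify that $\frac{d}{dt}\big|_{t=0}A_d(\Gamma'_t(\bar M))$ is manifestly zero for it; the identity for the original $\Gamma$ then follows.

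The natural candidate is the reparametrization variation. Since $V_{\bar p}\in d\Phi_{\bar p}(T_{\bar p}\bar M)$ and $\Phi$ is an immersion, there is a unique compactly supported vector field $\bar V\in\mathfrak{X}(\bar M)$ with $d\Phi(\bar V)=V$. Let $\{\bar\phi_t\}_{t\in\rr}$ be its globally defined flow; each $\bar\phi_t$ is a diffeomorphism of $\bar M$ equal to the identity outside a compact set. Define
\[
\Gamma'(\bar p,t):=\Phi(\bar\phi_t(\bar p)).
\]
Then $\Gamma'_0=\Phi$, each $\Gamma'_t=\Phi\circ\bar\phi_t$ is an immersion with the same image $\Gamma'_t(\bar M)=\Phi(\bar M)=M$ (so in particular the degree is preserved), and by the chain rule $\frac{\ptl\Gamma'}{\ptl t}(\bar p,0)=d\Phi_{\bar p}(\bar V_{\bar p})=V_{\bar p}$. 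Hence $\Gamma'$ is an admissible variation of $\Phi$ whose associated variational vector field is precisely $V$.

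Since $\Gamma'_t$ is only a reparametrization of $\Phi$, the integral formula \eqref{eq:projected_integral_formula_Ad} together with the standard change-of-variables formula yields $A_d(\Gamma'_t(\bar M))=\int_{\bar M}\Theta\,d\mu=A_d(M)$ for every $t$; more concretely, a $\mu'_t$-orthonormal basis at $\bar p$ (with $\mu'_t=(\Gamma'_t)^*g=\bar\phi_t^*\mu$) is carried by $d\bar\phi_t$ to a $\mu$-orthonormal basis at $\bar\phi_t(\bar p)$, from which $\Theta'_t(\bar p)=\Theta(\bar\phi_t(\bar p))$ and the equality $\int_{\bar M}\Theta'_t\,d\mu'_t=\int_{\bar M}\Theta\,d\mu$ follow by change of variables. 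Differentiating at $t=0$ gives $\frac{d}{dt}\big|_{t=0}A_d(\Gamma'_t(\bar M))=0$. Applying Theorem~\ref{thm:1st} to $\Gamma'$, the integral on the right of \eqref{eq:1stvar} vanishes; since this integral is determined by $V$ alone, the same vanishing holds for the original admissible variation $\Gamma$, which proves the statement.

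There is no substantial obstacle here: the only point deserving care is the invariance of $A_d$ under reparametrization of $\bar M$, which is essentially built into the definition of the area density $\Theta$ and the pulled-back measure $\mu=\Phi^*g$. One could alternatively attempt a direct pointwise verification that $\divv_{\bar M}^d V+f(V)\equiv 0$ for tangent $V$ by expanding $V=\sum_k a_k e_k$ in the first variation formula; this route, however, is considerably more computational and gains little over the clean geometric argument above.
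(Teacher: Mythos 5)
Your argument is correct and is essentially the paper's: both reduce the claim to the reparametrization invariance of $A_d$ under the flow of $\bar V=d\Phi^{-1}(V)$. The paper phrases this infinitesimally, computing $\tfrac{d}{dt}\big|_{t=0}A_d=\int_{\bar M}\divv_{\bar M}(\Theta\,\bar V)\,d\mu=0$ via the divergence theorem, whereas you observe that $t\mapsto A_d(\Gamma'_t(\bar M))$ is literally constant by change of variables and then transfer the conclusion to the original variation through the fact that the right-hand side of \eqref{eq:1stvar} depends only on $V$ --- a point the paper leaves implicit when it writes ``$\Gamma_t(\bar M)\subset\Phi(\bar M)$ for all $t$'' even though only the tangency of $V$ is assumed.
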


\begin{proof}
Since $\Gamma_t(\bar{M})\subset\Phi(M)$ for all $t$, the vector field $\bar{V}_p=d\Phi_{\bar{p}}^{-1}(V_{\bar{p}})$ is tangent to $\bar{M}$ and we have
\begin{equation*}
\frac{d}{dt}\bigg|_{t=0}A_d(M)=\int_{\bar{M}} (\bar{V}(\Theta)+\Theta\divv_{\bar{M}}\bar{V})\,d\mu=\int_{\bar{M}}\divv_{\bar{M}}(\Theta \bar{V})\,d\mu=0.
\end{equation*}
\end{proof}

\begin{lemma}
\label{lem:div}
 Let $f,g\in C^\infty(M)$and $X$ be a tangential vector field in $C^{\infty}(M,TM)$. Then there holds,
 \begin{enumerate}
  \item [(i)] $f\divv_M(X)+ X(f)=\divv_M(fX) $,
  \item[(ii)] $g  X(f)=\divv_M(f  g X)-g   f\divv_M(X)-f  X(g)$.
 \end{enumerate}

 \begin{proof}
  By the definition of divergence we obtain (i)  as follows
  \[
   \divv_M(f X)= \sum_{i=1}^m \escpr{\nabla_{e_i}(f   X),e_i}= \sum_{i=1}^m e_i(f) \escpr{X,e_i}+ f \escpr{\nabla_{e_i}(X),e_i}.
  \]
  To deduce (ii) we apply twice (i) as follows
  \[
   \divv_M( g f X)-f  X(g) =g\divv_M(f X)= g X(f)+ g  f \divv_M(X).\qedhere
  \]
 \end{proof}
 \label{lm:div}
\end{lemma}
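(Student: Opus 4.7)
The plan is to prove part (i) directly from the definition of the Riemannian divergence on $M$, and then obtain (ii) as a formal consequence of (i) applied twice combined with the Leibniz rule for a vector field acting on a product of functions.

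For (i), I would pick a local orthonormal frame $(e_1,\ldots,e_m)$ for $TM$ and start from
\[
\divv_M(fX)=\sum_{i=1}^m \escpr{\nabla_{e_i}(fX),e_i}.
\]
Using the Leibniz rule for the Levi-Civita connection of the induced Riemannian metric, $\nabla_{e_i}(fX)=e_i(f)\,X+f\,\nabla_{e_i}X$, and linearity of the inner product yields
\[
\divv_M(fX)=\sum_{i=1}^m e_i(f)\,\escpr{X,e_i}+f\sum_{i=1}^m\escpr{\nabla_{e_i}X,e_i}.
\]
The second summand is $f\,\divv_M(X)$ by definition. For the first, expand $X=\sum_j\escpr{X,e_j}e_j$; applying this to $f$ gives $X(f)=\sum_j\escpr{X,e_j}\,e_j(f)$, which matches the first sum after relabeling. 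This identifies the right-hand side with $X(f)+f\,\divv_M(X)$, proving (i).

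For (ii), I would apply (i) with the function $fg$ in place of $f$ to get
\[
\divv_M(fgX)=fg\,\divv_M(X)+X(fg),
\]
and then invoke the standard Leibniz rule for a vector field acting on a product of smooth functions, $X(fg)=f\,X(g)+g\,X(f)$. Substituting and solving for $g\,X(f)$ immediately produces the formula in (ii).

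No step here poses a genuine obstacle; the only mild care needed is the orthonormal-frame identity $\sum_i e_i(f)\,\escpr{X,e_i}=X(f)$, which is nothing more than the expansion of $X$ in the basis $(e_i)$ combined with linearity of the directional derivative. Everything else is a formal manipulation from the product rule, so the lemma follows in two short lines once (i) is established.
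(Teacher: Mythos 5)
Your proposal is correct and takes essentially the same route as the paper: part (i) is proved identically from the Leibniz rule for the connection together with the frame identity $\sum_i e_i(f)\escpr{X,e_i}=X(f)$. For part (ii) you apply (i) once to the product $fg$ and then use $X(fg)=fX(g)+gX(f)$, while the paper applies (i) twice (to $g$ with the field $fX$, then to $f$ with $X$); these are equivalent bookkeepings of the same product-rule computation and both yield the stated identity.
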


\begin{theorem}
\label{th:fvf}
 Let $\Phi:\bar{M}\to N$ be an immersion of degree $d$ of a smooth $m$-dimensional manifold into an equiregular graded manifold equipped with a Riemannian metric $g$. Assume that there exists an admissible variation $\Gamma:\bar{M}\times (-\eps,\eps)\to N$ with associated variational field $V$ with compact support. Then
\begin{equation}
\frac{d}{dt}\bigg|_{t=0} A_d(\Gamma_t(\bar{M}))=\int_{\bar{M}} \escpr{V, \mathbf{H}_d}d\mu,
\end{equation}
where $\mathbf{H}_d$ is the vector field 
\begin{equation}
\label{eq:submeancurvature2}
 \begin{split}
&-\sum_{j=m+1}^n \sum_{i=1}^m\divv_M\big(\xi_{ij}E_i\big)N_j.
\\
& +\sum_{j=m+1}^n \sum_{i=1}^m \escpr{E_1\wedge\ldots \wedge \nabla_{E_i}N_j\wedge\ldots\wedge E_m,\frac{(E_1\wedge\ldots\wedge E_m)_d}{|(E_1\wedge\ldots\wedge E_m)_d|}}\,N_j
\\
& +\sum_{j=m+1}^n \frac{f(N_j)}{\Theta}N_j.
\end{split}
\end{equation}
In this formula, $(E_i)_i$ is a local orthonormal basis of $TM$ and $(N_j)_j$ a local orthonormal basis of $TM^\perp$. The functions $\xi_{ij}$ are given by
\begin{equation}
\label{eq:xiij}
\xi_{ij}=\escpr{E_1\wedge\ldots\wedge\stackrel{(i)}{N_j}\wedge\ldots \wedge E_m,\frac{(E_1\wedge\ldots\wedge E_m)_d}{|(E_1\wedge\ldots\wedge E_m)_d|}}.
\end{equation}
\end{theorem}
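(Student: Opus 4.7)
The plan is to split the first variation formula from Theorem~\ref{thm:1st} into tangential and normal components. Writing $V=V^\top+V^\perp$ with $V^\top=\sum_{i=1}^m a_i E_i$ and $V^\perp=\sum_{j=m+1}^n b_j N_j$, both the operators $\divv_{\bar{M}}^d$ and the linear functional $f$ are linear in $V$, so the first variation decomposes as a sum of two integrals that I will treat separately.

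First I would handle the tangential piece. Since $V^\top$ has compact support and projects to a tangent field on $\bar{M}$, its flow $\phi_t$ on $\bar{M}$ is well defined; setting $\Gamma_t:=\Phi\circ\phi_t$ produces an admissible variation (its image always lies in $M$, so the degree is preserved) with variational vector field $V^\top$. Applying Theorem~\ref{thm:1st} to this particular variation expresses its first variation as $\int \frac{1}{\Theta}(\divv_{\bar{M}}^d V^\top+f(V^\top))\,d\mu$, while Proposition~\ref{pr:tangentialvariation} simultaneously forces this first variation to vanish. Hence the tangential contribution is identically zero, and only the normal piece survives.

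For the normal piece I would expand $\nabla_{e_i}V^\perp=\sum_j\bigl(E_i(b_j)\,N_j + b_j \nabla_{e_i}N_j\bigr)$ and substitute into the definition of $\divv_{\bar{M}}^d V^\perp$. Using that $\tfrac{(e_1\wedge\ldots\wedge e_m)_d}{\Theta}=\tfrac{(E_1\wedge\ldots\wedge E_m)_d}{|(E_1\wedge\ldots\wedge E_m)_d|}$, the first group produces exactly the factor $\xi_{ij}\Theta$ of \eqref{eq:xiij}, while the second group yields the middle summand of \eqref{eq:submeancurvature2} multiplied by $b_j$. Dividing $f(V^\perp)=\sum_j b_j f(N_j)$ by $\Theta$ provides the third summand.

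Finally, the only term still carrying derivatives of the unknown coefficients $b_j$ is $\sum_{i,j}\xi_{ij}E_i(b_j)$. Applying Lemma~\ref{lem:div}(i) with $f=b_j$ and $X=\xi_{ij}E_i$ yields the pointwise identity $\xi_{ij}E_i(b_j)=\divv_{\bar{M}}(b_j\xi_{ij}E_i)-b_j\divv_{\bar{M}}(\xi_{ij}E_i)$. Because $V^\perp$ inherits compact support from $V$, the divergence term integrates to zero, leaving $-\sum_{i,j}b_j\,\divv_{\bar{M}}(\xi_{ij}E_i)$, which supplies the first summand of $\mathbf{H}_d$. Collecting the three contributions rewrites the first variation as $\int\escpr{V^\perp,\mathbf{H}_d}\,d\mu=\int\escpr{V,\mathbf{H}_d}\,d\mu$, the last equality holding because $\mathbf{H}_d$ is normal by construction. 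The most delicate point is justifying that the flow construction really gives an admissible variation, but this is automatic from $\Gamma_t(\bar{M})\subseteq M$, which forces the degree to remain equal to $d$; all remaining steps are algebra and one clean integration by parts.
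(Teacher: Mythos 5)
Your proposal is correct and follows essentially the same route as the paper: split off the tangential part via Proposition~\ref{pr:tangentialvariation}, expand $\nabla_{E_i}V^\perp$ to produce the $\xi_{ij}$ and $\nabla_{E_i}N_j$ terms, and integrate by parts (Lemma~\ref{lem:div}) on the $\xi_{ij}E_i(\escpr{V,N_j})$ terms to obtain $\mathbf{H}_1$, with $f(V^\perp)/\Theta$ giving $\mathbf{H}_3$. The only cosmetic difference is that you invoke part (i) of Lemma~\ref{lem:div} where the paper cites part (ii); the resulting identity is the same.
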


\begin{proof}
Since our computations are local and immersions are local embeddings, we shall identify locally $\bar{M}$ and $M$ to simplify the notation.

We decompose $V= V^\top+V^{\perp}$ in its tangential $ V^\top$ and perpendicular $V^{\perp}$ parts. Since $\divv_{\bar{M}}^d$ and the functional $f$ defined in \eqref{eq:deff} are additive, we use the first variation formula \eqref{eq:1stvar} and Proposition~\ref{pr:tangentialvariation} to obtain
\begin{equation*}
\frac{d}{dt}\bigg|_{t=0} A_d(\Gamma_t(\bar{M}))=\int_{\bar{M}} \frac{1}{\Theta(\bar{p})}\,\big((\divv_{\bar{M}}^d V^{\perp})(\bar{p})+f(V^{\perp}_{\bar{p}})\big) d\mu(\bar{p}).
\end{equation*}

To compute this integrand we consider a local orthonormal basis $(E_i)_i$ in $TM$ around $p$ and a local orthonormal basis $(N_j)_j$ of $TM^\perp$ with $(N_j)_j$. We have
\[
V^\perp=\sum_{j=m+1}^n\escpr{V,N_j}N_j.
\]

We compute first 
\[
\frac{\divv_{\bar{M}}^d V^\perp}{\Theta}=\sum_{i=1}^m\escpr{E_1\wedge\ldots\wedge\nabla_{E_i}V^\perp\wedge\ldots \wedge E_m,\frac{(E_1\wedge\ldots\wedge E_m)_d}{|(E_1\wedge\ldots\wedge E_m)_d|}}
\]
as
\[
\sum_{i=1}^m\sum_{j=m+1}^n \escpr{E_1\wedge\ldots\wedge\big(\nabla_{E_i}\escpr{V,N_j}N_j\big)\wedge\ldots \wedge E_m,\frac{(E_1\wedge\ldots\wedge E_m)_d}{|(E_1\wedge\ldots\wedge E_m)_d|}},
\]
that it is equal to
\begin{equation}
\label{eq:H1+H2}
\begin{split}
\sum_{i=1}^m\sum_{j=m+1}^n  \bigg(&E_i\big(\escpr{V,N_j}\big) \escpr{E_1\wedge\ldots\wedge\stackrel{(i)}{N_j}\wedge\ldots \wedge E_m,\frac{(E_1\wedge\ldots\wedge E_m)_d}{|(E_1\wedge\ldots\wedge E_m)_d|}}
\\
&+\escpr{V,N_j}\escpr{E_1\wedge\ldots \wedge\stackrel{(i)}{\nabla_{E_i}N_j}\wedge\ldots\wedge E_m,\frac{(E_1\wedge\ldots\wedge E_m)_d}{|(E_1\wedge\ldots\wedge E_m)_d|}}\bigg).
\end{split}
\end{equation}
The group of summands in the second line of \eqref{eq:H1+H2} is equal to $\escpr{V,\mathbf{H}_2}$, where
\[
\mathbf{H}_2=\sum_{i=1}^m\sum_{j=m+1}^n \escpr{E_1\wedge\ldots \wedge \stackrel{(i)}{\nabla_{E_i}N_j}\wedge\ldots\wedge E_m,\frac{(E_1\wedge\ldots\wedge E_m)_d}{|(E_1\wedge\ldots\wedge E_m)_d|}}\,N_j. 
\]
To treat the group of summands in the first line of \eqref{eq:H1+H2} we use (ii) in Lemma~\ref{lem:div}. recalling \eqref{eq:xiij} we have
\[
E_i\big(\escpr{V,N_j}\big)\xi_{ij}=\divv_M\big(\escpr{V,N_j}\xi_{ij}E_i\big)-\escpr{V,\divv_M\big(\xi_{ij}E_i\big)N_j},
\]
so that applying the Divergence Theorem we have that the integral in $M$ of the first group of summands in \eqref{eq:H1+H2} is equal to
\[
\int_M \escpr{V,\mathbf{H}_1}d\mu,
\]
where
\[
\mathbf{H}_1=-\sum_{i=1}^m\sum_{j=m+1}^n \divv_M\big(\xi_{ij}E_i\big)N_j.
\]

We treat finally the summand
\[
\frac{f(V^\bot)}{\Theta}=\sum_{i=m+1}^n\escpr{V,N_j}\frac{f(N_j)}{\Theta}=\escpr{V,\mathbf{H}_3},
\]
where
\[
\mathbf{H}_3=\sum_{j=m+1}^n \frac{f(N_j)}{\Theta}N_j.
\]
This implies the result since $\mathbf{H}_d=\mathbf{H}_1+\mathbf{H}_2+\mathbf{H}_3$.
\end{proof}

In the following result we obtain a slightly different expression for the mean curvature $\mathbf{H}_d$ in terms of Lie brackets. This expression is sometimes more suitable for computations.

\begin{corollary}
Let $\Phi:\bar{M}\to N$ be an immersion of degree $d$ of a smooth $m$-dimensional manifold into an equiregular graded manifold equipped with a Riemannian metric $g$, $M=\Phi(\bar{M})$. We consider an extension $(E_i)_i$ of a  local orthonormal basis of $TM$ and respectively an extension $(N_j)_j$ of a local orthonormal basis of $TM^\perp$ to an open neighborhood of $N$. Then the  vector field $\mathbf{H}_d$ defined in \eqref{eq:submeancurvature2} is equal to
\begin{equation}
\label{eq:submeancurvature}
 \begin{aligned}
\mathbf{H}_d=\sum_{j=m+1}^n \Big( &\divv_M \Big( \Theta  N_j-\sum_{i=1}^m \xi_{ij} E_i \Big)+ \\
& + N_j(\Theta) + \sum_{i=1}^m  \sum_{k=m+1}^n \xi_{ik} \escpr{[ E_i, N_j], N_k} \Big) N_j,\\
\end{aligned}
\end{equation}
where $\xi_{ij}$ is defined in \eqref{eq:xiij}.
\end{corollary}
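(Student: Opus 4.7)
The plan is to match, coefficient by coefficient along the orthonormal normal frame $(N_j)_{j=m+1}^n$, the expression \eqref{eq:submeancurvature2} for $\mathbf{H}_d$ with the one claimed in \eqref{eq:submeancurvature}. The $-\sum_i\divv_M(\xi_{ij}E_i)$ piece appears in both sides, so the corollary reduces to the pointwise identity
\begin{equation*}
\sum_{i=1}^m\escpr{E_1\wdw\nabla_{E_i}N_j\wdw E_m,\tfrac{(E_1\wdw E_m)_d}{\Theta}}+\frac{f(N_j)}{\Theta}=\divv_M(\Theta N_j)+N_j(\Theta)+\sum_{i,k}\xi_{ik}\escpr{[E_i,N_j],N_k}
\end{equation*}
for each $j$. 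First I would take the extensions $(E_i)_i$ and $(N_j)_j$ to be orthonormal in a full neighborhood of $M$ in $N$ (locally achievable via Gram--Schmidt), which secures $\escpr{\nabla_Y X,X}=0$ for $X\in\{E_i,N_j\}$ and any $Y$, and also $\escpr{E_i,N_l}\equiv 0$ throughout that neighborhood.

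Next, I would expand the first summand on the left by splitting $\nabla_{E_i}N_j$ along this moving frame. Since $E_1\wdw E_k\wdw E_m$ with $E_k$ in slot $i$ vanishes unless $k=i$ and $\escpr{E_1\wdw E_m,(E_1\wdw E_m)_d/\Theta}=\Theta$, the tangential part contributes $\Theta\sum_i\escpr{\nabla_{E_i}N_j,E_i}$, which equals $\divv_M(\Theta N_j)$ on $M$ thanks to $\escpr{N_j,E_i}|_M=0$. The normal part contributes $\sum_{i,l}\escpr{\nabla_{E_i}N_j,N_l}\,\xi_{il}$; writing $\nabla_{E_i}N_j=[E_i,N_j]+\nabla_{N_j}E_i$ splits this further into $\sum_{i,k}\xi_{ik}\escpr{[E_i,N_j],N_k}+\sum_{i,l}\xi_{il}\escpr{\nabla_{N_j}E_i,N_l}$. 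Matching the divergence and Lie bracket terms against the right-hand side then collapses the required identity to
\begin{equation*}
N_j(\Theta)-\frac{f(N_j)}{\Theta}=\sum_{i,l}\xi_{il}\escpr{\nabla_{N_j}E_i,N_l}.
\end{equation*}

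Finally, to establish this last identity I would differentiate $\Theta^{2}=\sum_{\deg X_J=d}\escpr{E_1\wdw E_m,X_J}^{2}$ along $N_j$ and apply the Leibniz rule in each factor. The contribution coming from $\nabla_{N_j}X_J$, once summed against $\escpr{E_1\wdw E_m,X_J}$, is exactly $f(N_j)$ by definition \eqref{eq:deff}, while the contribution coming from $\nabla_{N_j}E_i$ reassembles into $\sum_i\escpr{E_1\wdw\nabla_{N_j}E_i\wdw E_m,(E_1\wdw E_m)_d}$. Decomposing $\nabla_{N_j}E_i$ tangentially and normally as before, the tangent part $\Theta\sum_i\escpr{\nabla_{N_j}E_i,E_i}$ vanishes because $\escpr{\nabla_{N_j}E_i,E_i}=\tfrac12 N_j|E_i|^{2}=0$, and what survives is exactly $\sum_{i,l}\xi_{il}\escpr{\nabla_{N_j}E_i,N_l}$. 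The main bookkeeping obstacle is controlling the degree-$d$ projection $(\,\cdot\,)_d$ under differentiation in the direction $N_j$, so that the $f(N_j)$ summand is isolated cleanly; the orthonormal extension of the frames is what neutralizes every remaining spurious term.
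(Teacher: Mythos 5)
Your proposal is correct and follows essentially the same route as the paper: both decompose $\nabla_{E_i}N_j$ and $\nabla_{N_j}E_i$ into tangential and normal parts, use torsion-freeness to produce the bracket term $\escpr{[E_i,N_j],N_k}$, and handle the $f(N_j)/\Theta$ summand via the same Leibniz/metric-compatibility identity (your differentiation of $\Theta^2$ along $N_j$ is just a rearrangement of the paper's $\escpr{E_1\wedge\cdots\wedge E_m,\nabla_{N_j}X_J}=N_j(\tau_J)-\escpr{\nabla_{N_j}(E_1\wedge\cdots\wedge E_m),X_J}$). Your explicit remark that the orthonormal extension kills $\escpr{\nabla_{N_j}E_i,E_i}$ is a point the paper leaves implicit, but the substance is identical.
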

\begin{proof}
Keeping the notation used in the proof of Theorem~\ref{th:fvf} we consider 
\[
\mathbf{H}_2= \sum_{i=1}^m\sum_{j=m+1}^n \escpr{E_1\wedge\ldots \wedge \stackrel{(i)}{\nabla_{E_i}N_j}\wedge\ldots\wedge E_m,\frac{(E_1\wedge\ldots\wedge E_m)_d}{|(E_1\wedge\ldots\wedge E_m)_d|}}\,N_j.
\]
Writing 
\begin{equation}
\label{eq:nablaEN}
\nabla_{E_i} N_j= \sum_{\nu=1}^m  \escpr{\nabla_{E_i} N_j, E_{\nu}} E_{\nu} + \sum_{k=m+1}^{m} \escpr{\nabla_{E_i} N_j, N_k} N_k,
\end{equation}
we gain
\[
\mathbf{H}_2= \sum_{j=m+1}^n \Big( \divv_M ( N_j ) \, |(E_1\wedge\ldots\wedge E_m)_d| + \sum_{i=1}^m\sum_{k=m+1}^n  \xi_{ik}\escpr{\nabla_{E_i} N_j, N_k} \Big) N_j.
\]
Let us consider
\begin{equation}
\label{eq:H3}
\mathbf{H}_3=\sum_{j=m+1}^n \sum_{\deg(X_J)=d}\bigg(\escpr{E_1\wedge\ldots\wedge E_m,\nabla_{N_j}X_J}\frac{\escpr{E_1\wedge\ldots\wedge E_m,X_J}}{|(E_1\wedge\ldots\wedge E_m)_d|} \bigg)\,N_j.
\end{equation}
Since the Levi-Civita connection preserves the metric, we have
\begin{equation}
\label{eq:LCmp}
\escpr{E_1\wedge\ldots\wedge E_m,\nabla_{N_j}X_J}= N_j(\escpr{E_1\wedge\ldots\wedge E_m,X_J})-\escpr{\nabla_{N_j} (E_1 \wdw E_m), X_J}.
\end{equation}
Putting the first term of the right hand side of \eqref{eq:LCmp} in \eqref{eq:H3} we obtain
\[
\sum_{\deg(X_J)=d} N_j(\escpr{E_1\wedge\ldots\wedge E_m,X_J}) \frac{\escpr{E_1\wedge\ldots\wedge E_m,X_J}}{|(E_1\wedge\ldots\wedge E_m)_d|}= N_j(\Theta).
\]
On the other hand writing 
\[
\nabla_{N_j} E_i= \sum_{\nu=1}^m  \escpr{\nabla_{N_j} E_i, E_{\nu}} E_{\nu} + \sum_{k=m+1}^{m} \escpr{\nabla_{N_j} E_i, N_k} N_k
\]
we deduce
\begin{align*}
&\sum_{i=1}^m \sum_{\deg(X_J)=d}  \escpr{E_1\wedge\ldots \wedge \stackrel{(i)}{\nabla_{N_j} E_i}\wedge\ldots\wedge E_m, X_J} \frac{\escpr{E_1\wedge\ldots\wedge E_m,X_J}}{|(E_1\wedge\ldots\wedge E_m)_d|}=\\
&= \sum_{i=1}^m \sum_{k=m+1}^n \escpr{\nabla_{N_j} E_i, N_k} \xi_{ik}.
\end{align*}
Therefore we obtain 
\[
\mathbf{H}_3=\sum_{j=m+1}^n \Big( N_j (\Theta)- \sum_{i=1}^m \sum_{k=m+1}^n \escpr{\nabla_{N_j} E_i, N_k} \xi_{ik} \Big) N_j.
\]
Since the Levi-Civita connection is torsion-free we have 
\[
\mathbf{H}_2+\mathbf{H}_3=\sum_{j=m+1}^n \Big( \divv_M ( N_j ) \, \Theta + N_j(\Theta) + \sum_{i=1}^m \sum_{k=m+1}^n \xi_{ik} \escpr{[E_i,N_j],N_k} \Big).
\]
Since $\divv_M ( N_j ) \, \Theta= \divv_M (  \Theta  \, N_j ) $ we conclude that $\mathbf{H}_d=\mathbf{H}_1+ \mathbf{H}_2+ \mathbf{H}_3$ is equal to \eqref{eq:submeancurvature}.
\end{proof}

\subsection{First variation formula for strongly regular submanifolds}
\begin{definition}
\label{def:Hcomp}
Let $\Phi: \bar{M} \to N$ be a strongly regular immersion (see \S~ \ref{sc:integrability}) at $\bar{p}$, $v_{m+1},\ldots, v_n$ be an orthonormal adapted basis of the normal bundle and $k$ be the integer defined in \ref{def:k}. Let $N_{m+1},\ldots,N_n$ be a local adapted frame of the normal bundle so that $(N_j)_p=v_j$. By Remark~\ref{rk:nsr} the immersion $\Phi$ is strongly regular at $\bar{p}$ if and only if  $\text{rank}(A^{\perp})=\ell$. Then there exists a partition of $\{m+1,\ldots, m+k\}$ into sub-indices  $h_1<\ldots <h_\ell$ and $i_1<\ldots <i_{m+k-\ell}$ such that the matrix 
\begin{equation}
\label{eq:hatAper}
\hat{A}^{\perp} (\bar{p} )=\left(\begin{array}{ccc}
\alpha_{1 h_1} (\bar{p} )& \cdots & \alpha_{1 h_{\ell}}(\bar{p} )\\
\vdots & \ddots & \vdots\\
\alpha_{\ell  h_1}(\bar{p} )& \cdots & \alpha_{\ell  h_{\ell}}(\bar{p} )
\end{array} \right)
\end{equation}
is invertible. The mean curvature vector of degree $d$ defined in Theorem \ref{th:fvf} is given by
$$\mathbf{H}_d= \sum_{j=m+1}^{n} H_d^j  N_j.$$
Then we decompose $\mathbf{H}_d$ into the following three components 
\begin{equation}
\mathbf{H}_d^{v}=\begin{pmatrix} H_d^{m+k+1} \\ \vdots \\ H_d^n \end{pmatrix}^t  , \quad
\mathbf{H}_d^h=\begin{pmatrix} H_d^{h_1} \\ \vdots \\ H_d^{h_{\ell}} \end{pmatrix}^t, \quad \text{and} \quad  \mathbf{H}_d^\iota=\begin{pmatrix} H_d^{i_1} \\ \vdots \\ H_d^{i_{m+k-\ell}} \end{pmatrix}^t
\label{eq:Hcomp}
\end{equation}
with respect to $N_{m+1},\ldots, N_n$.
\end{definition}

\begin{theorem}
Let $\Phi: \bar{M} \to N$ be a strongly regular immersion at $\bar{p}$ in an equiregular  graded manifold. Then $\Phi(\bar{M})$ is a critical point of $A_d$ if and only if the immersion $\Phi$ verifies 
\begin{equation}
\label{eq:meancruv1}
\mathbf{H}_d^{\iota}- \mathbf{H}_d^{h} (\hat{A}^{\perp})^{-1} \tilde{A}^{\perp} =0,
\end{equation}
and 
\begin{equation}
\label{eq:meancruv2}
\mathbf{H}_d^v- \mathbf{H}_d^{h}(\hat{A}^{\perp})^{-1} B^{\perp} -\sum_{j=1}^m E_j^{*}\left( \mathbf{H}_d^{h} \,(\hat{A}^{\perp})^{-1} C_j^{\perp}  \right) =0,
\end{equation}
where  $E_j^{*}$ is the adjoint operator of $E_j$ for $j=1,\ldots,m$ and $\mathbf{H}_d^v$, $\mathbf{H}_d^h$ and $\mathbf{H}_d^{\iota}$ are defined in \eqref{eq:Hcomp}, $B^{\perp}$, $C_j^{\perp}$ in \ref{ss:intriniscadmsy}, $\hat{A}^{\perp}$ in \eqref{eq:hatAper} and $\tilde{A}^{\perp}$ is the $\ell \times (m+k-\ell)$ matrix given by the columns $i_1, \ldots, i_{m+k-\ell}$ of $A^{\perp}$. 
\end{theorem}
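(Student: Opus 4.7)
The plan is to combine the first variation formula of Theorem~\ref{th:fvf} with the local structure of the admissibility system from Section~\ref{sc:structure}, using strong regularity (through Theorem~\ref{teor:locint}) to produce a sufficiently rich family of admissible variations to localize the Euler--Lagrange equations.

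First I would observe that, by Theorem~\ref{th:fvf} and Proposition~\ref{pr:tangentialvariation}, only the normal part $V^{\perp}$ of the variational field contributes to the first variation. Working in the intrinsic adapted normal frame $N_{m+1},\ldots,N_n$ of Section~\ref{ss:intriniscadmsy}, I would write $V^{\perp}$ in the splitting of Definition~\ref{def:Hcomp} as
\[
V^{\perp}=\sum_{s=1}^{\ell} \phi_{h_s}\, N_{h_s}+\sum_{t=1}^{m+k-\ell} \phi_{i_t}\, N_{i_t}+\sum_{r=m+k+1}^{n} \psi_r\, N_r,
\]
and denote the three blocks of coefficients by $G^{\perp}_h$, $G^{\perp}_{\iota}$ and $F^{\perp}$. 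In this notation the integrand of the first variation becomes $\mathbf{H}_d^{h}\cdot G^{\perp}_h+\mathbf{H}_d^{\iota}\cdot G^{\perp}_{\iota}+\mathbf{H}_d^{v}\cdot F^{\perp}$.

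Next I would exploit strong regularity. The admissibility system \eqref{eq:normal system of PDEs3} takes the form
\[
\sum_{j=1}^{m} C^{\perp}_j\, E_j(F^{\perp})+B^{\perp} F^{\perp}+\hat{A}^{\perp} G^{\perp}_h+\tilde{A}^{\perp} G^{\perp}_{\iota}=0,
\]
and since $\hat{A}^{\perp}(\bar{p})$ is invertible one can solve locally for the dependent block,
\[
G^{\perp}_h=-(\hat{A}^{\perp})^{-1}\Big(\sum_{j=1}^{m} C^{\perp}_j\, E_j(F^{\perp})+B^{\perp} F^{\perp}+\tilde{A}^{\perp} G^{\perp}_{\iota}\Big).
\]
By Theorem~\ref{teor:locint}, in a neighborhood $W_{\bar{p}}$ of $\bar{p}$ every admissible vector field with compact support integrates to a genuine admissible variation, so I may prescribe $F^{\perp}$ and $G^{\perp}_{\iota}$ freely with compact support in $W_{\bar{p}}$ and recover $G^{\perp}_h$ from the formula above. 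Substituting into the first variation and integrating by parts in the $E_j(F^{\perp})$ term via the formal $L^2(\mu)$-adjoint $E_j^{*}$, I would rewrite $\tfrac{d}{dt}\big|_{t=0}A_d(\Gamma_t(\bar M))$ as
\[
\int_{\bar M}\!\Big(\mathbf{H}_d^{\iota}-\mathbf{H}_d^{h}(\hat{A}^{\perp})^{-1}\tilde{A}^{\perp}\Big)G^{\perp}_{\iota}\, d\mu+\int_{\bar M}\!\Big(\mathbf{H}_d^{v}-\mathbf{H}_d^{h}(\hat{A}^{\perp})^{-1} B^{\perp}-\sum_{j=1}^{m} E_j^{*}\!\big(\mathbf{H}_d^{h}(\hat{A}^{\perp})^{-1} C^{\perp}_j\big)\Big)F^{\perp}\, d\mu.
\]
Since $F^{\perp}$ and $G^{\perp}_{\iota}$ are arbitrary compactly supported functions on $W_{\bar p}$, the fundamental lemma of the calculus of variations forces the two bracketed row vectors to vanish pointwise, yielding exactly \eqref{eq:meancruv1} and \eqref{eq:meancruv2}; the converse direction is immediate since the same identity shows that, when those two equations hold, the first variation is zero for every admissible $V$.

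The main obstacle I expect is justifying that strong regularity genuinely supplies a two-parameter family of independent admissible variations indexed by the free blocks $F^{\perp}$ and $G^{\perp}_{\iota}$: this is precisely what Theorem~\ref{teor:locint} provides, but it requires carefully matching the splitting used there with the splitting of $\mathbf{H}_d$ in Definition~\ref{def:Hcomp}, and verifying that the algebraic reconstruction of $G^{\perp}_h$ lies in the correct regularity class for the integration by parts to be legitimate. A secondary bookkeeping point is that $E_j^{*}$ depends on the measure $\mu=\Phi^{*}g$ and produces a divergence term in the integration by parts; compact support of $F^{\perp}$ kills the boundary contribution and the signs align to reproduce the minus signs displayed in \eqref{eq:meancruv2}.
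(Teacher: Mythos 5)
Your proposal is correct and follows essentially the same route as the paper's own proof: reduce to the normal part, split $V^{\perp}$ into the blocks of Definition~\ref{def:Hcomp}, solve the admissibility system for the dependent block via $(\hat{A}^{\perp})^{-1}$, substitute into the first variation formula of Theorem~\ref{th:fvf}, integrate by parts, and invoke the arbitrariness of the free blocks together with the integrability guaranteed by Theorem~\ref{teor:locint}. The only differences are notational ($F^{\perp}, G^{\perp}_h, G^{\perp}_{\iota}$ versus the paper's $\Psi, \Gamma, \Upsilon$).
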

\begin{proof}
Since $\Phi: \bar{M} \to N$ is a normal strongly regular immersion then by Theorem~\ref{teor:locint} each normal admissible vector field \[
V^{\perp}=\sum_{i=m+1}^{m+k} \phi_i \, N_i + \sum_{r=m+k+1}^n \psi_r \, N_r\] is integrable. Keeping in mind the sub-indices in Definition \ref{def:Hcomp},  we set 
\begin{equation}
\Psi=\begin{pmatrix} \psi_{m+k+1} \\ \vdots \\ \psi_n \end{pmatrix}  , \quad
\Gamma=\begin{pmatrix} \phi_{h_1} \\ \vdots \\ \phi_{h_{\ell}} \end{pmatrix} \quad \text{and} \quad \Upsilon=\begin{pmatrix} \phi_{i_1} \\ \vdots \\ \phi_{i_{m+k-\ell}} \end{pmatrix}.
\label{eq:PGU}
\end{equation}
Since the immersion $\Phi: \bar{M}\to N$ is  strongly regular, the admissibility condition \eqref{eq:normal system of PDEs3} for $V^{\perp}$ is equivalent to  
\begin{equation}
\Gamma= -(\hat{A}^{\perp})^{-1} 
\bigg( \sum_{j=1}^m C^{\perp}_j \, E_j (\Psi) + B^{\perp} \Psi +\tilde{A}^{\perp}\Upsilon \bigg).
\label{eq:normadm2}
\end{equation}
By Theorem~\ref{th:fvf} the first variational formula is given by 
\begin{equation*}
\begin{aligned}
\frac{d}{dt}\bigg|_{t=0} A_d(&\Gamma_t(\bar{M}))=\int_{\bar{M}} \escpr{V^{\perp}, \mathbf{H}_d}\\
&=\int_{\bar{M}}  \mathbf{H}_d^{v} \,  \Psi +   \mathbf{H}_d^{\iota} \, \Upsilon +  \mathbf{H}_d^{h}  \Gamma\\
&=\int_{\bar{M}}  \mathbf{H}_d^{v} \,  \Psi +   \mathbf{H}_d^{\iota} \, \Upsilon -  \mathbf{H}_d^{h} \,(\hat{A}^{\perp})^{-1} 
\bigg( \sum_{j=1}^m C^{\perp}_j \, E_j (\Psi) + B^{\perp} \Psi +\tilde{A}^{\perp}\Upsilon \bigg) \\
&=\int_{\bar{M}}  \bigg( \mathbf{H}_d^{\iota}- \mathbf{H}_d^{h} (\hat{A}^{\perp})^{-1} \tilde{A}^{\perp}  \bigg) \Upsilon+\\
& \qquad + \int_{\bar{M}} \bigg(\mathbf{H}_d^v- \mathbf{H}_d^{h}(\hat{A}^{\perp})^{-1} B^{\perp} -\sum_{j=1}^m E_j^{*}\bigg( \mathbf{H}_d^{h} \,(\hat{A}^{\perp})^{-1} C_j^{\perp}  \bigg)  \bigg) \Psi,
\end{aligned}
\end{equation*}
for every $\Psi \in C_0^{\infty}(W_{\bar{p}}, \rr^{n-m-k}) , \Upsilon \in C_0^{\infty}(W_{\bar{p}}, \rr^{k-\ell})$. By the arbitrariness of $\Psi$ and $\Upsilon$, the immersion  $\Phi$ is a critical point of the area $A_d$ if and only if it satisfies equations \eqref{eq:meancruv1} and \eqref{eq:meancruv2} on $W_{\bar{p}}$.
\end{proof}

\begin{example}[First variation for a hypersurface in a contact manifold]
\label{ex:fvhcm}
 Let $(M^{2n+1}, \omega)$ be a contact manifold such that $\mh= \ker(\omega)$, see \S~ \ref{contact geometry}. Let $T$ be the Reeb vector associated to this contact geometry and $g$ the Riemannian metric on $M$ that extends a given metric on $\mh$ and makes $T$ orthonormal to $\mh$. Let $\nabla$ be  the Riemannian connection associated to $g$. 

Let us consider a hypersurface $\Sigma$ immersed in $M$. As we showed in \S~\ref{contact geometry}, the degree of $\Sigma$ is maximum and equal to $2n+1$, thus each compactly supported vector field $V$ on $\Sigma$ is admissible. Following \S~\ref{contact geometry}, we consider the unit normal $N$ to $\Sigma$ and its horizontal projection  $N_h$.  As in \S~\ref{contact geometry}, we consider the vector fields 
$
\nu_h=\frac{N_h}{|N_h|}, \qquad 
$
and  $e_1,\ldots,e_{2n-1}$ an orthonormal basis of $T_p \Sigma \cap \mh_p $.
A straightforward computation, contained in \cite{phdthesis}, shows that the mean curvature $H_d$ deduced in \eqref{eq:submeancurvature} coincide with
\begin{equation}
\label{eq:MCECM}
\begin{aligned}
 \mathbf{H}_d 
 &=-\divv_{\Sigma}^h (\nu_h) +\escpr{[\nu_h,T],T}.
\end{aligned}
\end{equation}
When $\escpr{[\nu_h,T],T}=0$  we obtain  well known horizontal divergence of the horizontal normal. This definition of mean curvature for an immersed hypersurface was first given by S.Pauls \cite{scottpauls} for graphs over the $x,y$-plane in $\mathbb{H}^1$, later extended by J.-H. Cheng, J.-F. Hwang, A. Malchiodi and P. Yang in \cite{ChengMalchiodi} in a $3$-dimensional pseudo-hermitian manifold. In a more general setting this formula was deduced in \cite{HladPauls,DanielliGarofalo}. For more details see also \cite{GRisoperegions,CapognaCittiManfre,Nataliya,Galli, RitoreRosales, RitoreRosales1}.
\end{example}

\begin{example}[First variation for ruled surfaces in an Engel Structure]
\label{ex:2jetspace}
 Here we compute the mean curvature equation for the surface  $\Sigma\subset E$ of degree $4$ introduced in Section \ref{sc:2jetspace}.
 In \eqref{adapted vector tangent to Sigma}  we determined the tangent adapted basis
 \begin{align*}
    \tilde{E}_1&=\co \Phi_x+ \si \Phi_y= X_1+X_1(\kk)X_2,\\
    \tilde{E}_2&=-\si \Phi_x+\co \Phi_y=X_4-X_4(\theta)X_3+X_4(\kk)X_2
\end{align*}
A basis for the space $(TM)^{\perp}$ is given by 
 \begin{align*}
    \tilde{N}_3&=X_4(\theta)X_4+ X_3 \\
    \tilde{N}_4&=X_1(\kk)X_1-X_2+X_4(\kk)X_4
\end{align*}
By the Gram–Schmidt process we obtain an orthonormal basis with respect to the metric $g$ as follows
\begin{align*}
 E_1&=\dfrac{\tilde{E}_1}{|\tilde{E}_1|}= \dfrac{1}{\alpha_1} ( X_1+X_1(\kk)X_2),\\
 E_2&=\frac{1}{\alpha_2}\left(X_4- X_4(\theta)X_3+ \frac{X_4(\kk)}{\alpha_1^2} (X_2- X_1(\kk)X_1)\right)\\
 N_3&=\dfrac{1}{\alpha_3}(X_3+X_4(\theta)X_4)\\
  N_4&= \frac{\alpha_3}{\alpha_2 \alpha_1} \left( ( -X_1(\kk)X_1+X_2)+  \frac{X_4(\kk)}{\alpha_3^2} (X_4(\theta) X_3- X_4) \right)
\end{align*}
where we set
\begin{align*}
 \alpha_1&=\sqrt{1+X_1(\kk)^2}, \quad 
 \alpha_3=\sqrt{1+X_4(\theta)^2} \\ \alpha_2&=\sqrt{1+X_4(\theta)^2+\frac{X_4(\kk)^2}{(1+X_1(\kk)^2)}}=\frac{\sqrt{\alpha_1^2 \alpha_3^2+ X_4(\kk)^2}}{\alpha_1}
\end{align*}
and 
\[
N_h= -X_1(\kk)X_1+X_2, \quad \nu_h=\frac{1}{\alpha_1}(-X_1(\kk)X_1+X_2)
\]
Since the degree of $\Sigma$ is equal to $4$ we deduce that 
\[
(E_1\wedge E_2)_4= \frac{1}{\alpha_1 \alpha_2} (X_1\wedge X_4 + X_1(\kk) X_2 \wedge X_4),
\]
then it follows  $|(E_1\wedge E_2)_4|= \alpha_2^{-1}$ and
\[
\frac{(E_1\wedge E_2)_4}{|(E_1\wedge E_2)_4|}= \frac{1}{\alpha_1}(X_1\wedge X_4 + X_1(\kk) X_2 \wedge X_4).
\]
A straightforward computation shows that $\xi_{i3}$ for $i=1,2$ defined in \eqref{eq:submeancurvature} are given by
\begin{align*}
\xi_{13}&=\escpr{N_3 \wedge E_2, \frac{(E_1\wedge E_2)_4}{|(E_1\wedge E_2)_4|}}=0,\\
\xi_{23}&=\escpr{E_1 \wedge N_3, \frac{(E_1\wedge E_2)_4}{|(E_1\wedge E_2)_4|}}=\frac{X_4(\theta)}{\alpha_3},\\
\xi_{14}&=\escpr{N_4 \wedge E_2, \frac{(E_1\wedge E_2)_4}{|(E_1\wedge E_2)_4|}}=0,\\
\xi_{24}&=\escpr{E_1 \wedge N_4, \frac{(E_1\wedge E_2)_4}{|(E_1\wedge E_2)_4|}}=-\frac{X_4(\kk)}{\alpha_1\alpha_2  \alpha_3 } 
\end{align*}
Since we have  
\[
  \frac{1}{\alpha_2} N_3 - \frac{X_4(\theta)}{\alpha_3} E_2=   \frac{\alpha_3}{\alpha_2} X_3- \frac{X_4(\theta)X_4(\kk)}{\alpha_1 \alpha_2 \alpha_3} \nu_h .
\]
and 
\begin{align*}
\frac{1}{\alpha_2} N_4+ \frac{X_4(\kk)}{ \alpha_1 \alpha_2 \alpha_3} E_2
&= \frac{1}{\alpha_2^2}\bigg(\frac{\alpha_3}{ \alpha_1} \bigg(N_h+ \frac{X_4(\kk)}{\alpha_3^2}(X_4(\theta)X_3-X_4) \bigg)\\
& \qquad\qquad + \frac{X_4(\kk)}{\alpha_1 \alpha_3} \bigg( -X_4(\theta)X_3+X_4 -\frac{X_4(\kk)}{\alpha_1^2} N_h \bigg) \bigg) \\
&= \frac{1}{\alpha_2^2 \alpha_1} ( {\alpha_3}N_h + \frac{X_4(\kk)^2}{\alpha_3 \alpha_1^2} N_h)
= \frac{1}{\alpha_1 \alpha_3} N_h
\\
&=\frac{1}{\alpha_3} \nu_h
\end{align*}
it follows that the third component of $\mathbf{H}_d$ is equal to 
\begin{align*}
H_d^3=&-\divv_M \left(\frac{\alpha_3}{\alpha_2} X_3- \frac{X_4(\theta)X_4(\kk)}{\alpha_1 \alpha_2 \alpha_3} \nu_h\right) -N_3(\alpha_2^{-1}) \\
&\quad+ \frac{X_4(\theta)}{\alpha_3} \escpr{[N_3,E_2],N_3}- \frac{ X_4(\kk)}{\alpha_3 \alpha_2 \alpha_1 } \escpr{[N_3,E_2],v_4}
\end{align*}
and the fourth component of $\mathbf{H}_d$ is equal to
\begin{align*}
H_d^4=&-\divv_M \left(\frac{\nu_h}{\alpha_3}\right) -N_4(\alpha_2^{-1}) + \frac{X_4(\theta)}{\alpha_3} \escpr{[N_4,E_2],N_3}- \frac{ X_4(\kk)}{\alpha_3 \alpha_2 \alpha_1 } \escpr{[N_4,E_2],N_4}.
\end{align*}
Then first variation formula is given by 
\begin{equation}
\label{eq:fvfengel}
A_{d}(\Gamma_t(\Omega))=\int_{\Omega} \escpr{V^{\perp}, \mathbf{H}_d}= \int_{\Omega} H^3_d \, \psi_3 + H^4_d \, \psi_4
\end{equation}
for each $\psi_3, \psi_4 \in C_0^{\infty}$ satisfying \eqref{eq:admnormal}. Following Theorem~\ref{teor:locint} for each $\psi_3 \in C_0^{\infty} $ we deduce 
\begin{equation}
\label{eq:admnormal2}
\psi_4=- \frac{\bar{X}_1 (\psi_3) + b^{\perp} \psi_3}{a^{\perp}},
\end{equation}
since $a^{\perp}>0$. 
\begin{lemma}
Keeping the previous notation. Let $f,g:\Omega \rightarrow \rr$ be functions in $C_0^1(\Omega)$ and  
 \begin{align*}
 \bar{X}_1&=\cos( \theta(x,y)) \partial_x +\sin(\theta(x,y)) \partial_y, \\
 X_4&=-\sin(\theta(x,y)) \partial_x+\cos( \theta(x,y)) \partial_y
 \end{align*}
Then there holds 
 $$\int_{\Omega} g \bar{X}_1 (f) + \int_{\Omega}f g  \bar{X}_4 (\theta)=-\int_{\Omega} f \bar{X}_1 (g) .$$
 \label{lm:intbyparts}
 \end{lemma}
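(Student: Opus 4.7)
The plan is to prove this identity by a direct integration by parts in the Euclidean coordinates $(x,y)$ on $\Omega\subset\rr^2$, using Lebesgue measure (which is what the integrals refer to) and the compact support of $f$ and $g$. The structural observation driving the proof is that $\bar X_4(\theta)$ is precisely the Euclidean divergence of the horizontal unit vector field $(\cos\theta,\sin\theta)$, and hence the formal adjoint of $\bar X_1$ in $L^2(\Omega,dxdy)$ is $\bar X_1^{*}=-\bar X_1-\bar X_4(\theta)$.

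First I would expand $g\,\bar X_1(f)=g\cos\theta\,f_x+g\sin\theta\,f_y$ and rewrite the integrand via the product rule as
\begin{equation*}
g\,\bar X_1(f)=\bigl(fg\cos\theta\bigr)_x+\bigl(fg\sin\theta\bigr)_y-f\bigl(\cos\theta\,g_x+\sin\theta\,g_y\bigr)-fg\bigl((\cos\theta)_x+(\sin\theta)_y\bigr).
\end{equation*}
The first bracket is the Euclidean divergence of $fg\,(\cos\theta,\sin\theta)$, and since $fg\in C^1_0(\Omega)$ its integral over $\Omega$ vanishes by the classical divergence theorem. The second bracket contributes $-\int_\Omega f\,\bar X_1(g)$.

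It then remains only to identify the zeroth-order term. A direct computation using the chain rule gives
\begin{equation*}
(\cos\theta)_x+(\sin\theta)_y=-\sin\theta\,\theta_x+\cos\theta\,\theta_y=\bar X_4(\theta),
\end{equation*}
so the last summand integrates to $-\int_\Omega fg\,\bar X_4(\theta)$. Rearranging yields the claimed identity. There is no serious obstacle here; the only thing to watch is the sign bookkeeping and the fact that the formula involves $\bar X_4(\theta)$ precisely because $\bar X_1$ is not divergence-free in Euclidean coordinates, its Euclidean divergence being exactly $\bar X_4(\theta)$.
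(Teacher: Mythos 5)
Your proof is correct. The paper states this lemma without proof, and your argument—rewriting $g\,\bar X_1(f)$ via the product rule, discarding the Euclidean divergence term $(fg\cos\theta)_x+(fg\sin\theta)_y$ by compact support, and identifying $(\cos\theta)_x+(\sin\theta)_y=X_4(\theta)$ as the zeroth-order correction—is exactly the standard integration-by-parts computation the authors intend, with all signs checking out.
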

By Lemma~\ref{lm:intbyparts} and the admissibility equation \eqref{eq:admnormal2} we deduce that  \eqref{eq:fvfengel} is equivalent to 
\[
\int_{\Omega} \bigg(H_d^3 -\frac{b^{\perp}}{a^{\perp}}H_d^4 + \bar{X}_1\left(\frac{H_d^4}{a^{\perp}}\right) + X_4(\theta) \frac{H_d^4}{a^{\perp}} \bigg) \psi_3,
\]
for each $\psi_3 \in C_0^{\infty} (\Omega)$. Therefore a straightforward computation shows that  minimal $(\theta,\kk)$-graphs for the area functional $A_4$ verify the following third order PDE
\begin{equation}
\label{eq:EL-degree4}
\bar{X}_1(H_d^4)+ a^{\perp}  H_d^3  +  \bigg( \frac{X_4(\theta) }{\alpha_3^2}  [X_1, X_4](\theta) - \frac{1}{a^{\perp}}\bar{X}_1\left(a^{\perp} \right) \bigg)  H_d^4 =0.
\end{equation}
 \end{example}

\bibliography{degree}

\begin{thebibliography}{10}

\bibitem{BaldiFranchi12}
A.~Baldi and B.~Franchi.
\newblock Differential forms in {C}arnot groups: a {$\Gamma$}-convergence
  approach.
\newblock {\em Calc. Var. Partial Differential Equations}, 43(1-2):211--229,
  2012.

\bibitem{BASCV}
V.~{Barone Adesi}, F.~{Serra Cassano}, and D.~Vittone.
\newblock {The {B}ernstein problem for intrinsic graphs in {H}eisenberg groups
  and calibrations}.
\newblock {\em Calc. Var. Partial Differential Equations}, 30(1):17--49, 2007.

\bibitem{MR924805}
R.~L. Bryant.
\newblock {On notions of equivalence of variational problems with one
  independent variable}.
\newblock In {\em {Differential geometry: the interface between pure and
  applied mathematics ({S}an {A}ntonio, {T}ex., 1986)}}, volume~68 of {\em
  {Contemp. Math.}}, pages 65--76. Amer. Math. Soc., Providence, RI, 1987.

\bibitem{MR1240644}
R.~L. Bryant and L.~Hsu.
\newblock {Rigidity of integral curves of rank {$2$} distributions}.
\newblock {\em Invent. Math.}, 114(2):435--461, 1993.

\bibitem{BuragoYuri}
D.~Burago, Y.~Burago, and S.~Ivanov.
\newblock {\em {A course in metric geometry}}, volume~33 of {\em {Graduate
  Studies in Mathematics}}.
\newblock American Mathematical Society, Providence, RI, 2001.

\bibitem{CapognaCittiManfre}
L.~Capogna, G.~Citti, and M.~Manfredini.
\newblock {Regularity of mean curvature flow of graphs on {L}ie groups free up
  to step 2}.
\newblock {\em Nonlinear Anal.}, 126:437--450, 2015.

\bibitem{Cartan}
E.~Cartan.
\newblock {Sur la representation geometrique des syst{\'e}mes materiels non
  holonomes}.
\newblock {\em Proc. Int. Congr. Math.}, vol. 4, Bologna:253--261, 1928.

\bibitem{CJHHF}
J.-H. Cheng and J.-F. Hwang.
\newblock {Variations of generalized area functionals and {$p$}-area minimizers
  of bounded variation in the {H}eisenberg group}.
\newblock {\em Bull. Inst. Math. Acad. Sin. (N.S.)}, 5(4):369--412, 2010.

\bibitem{ChengMalchiodi}
J.-H. Cheng, J.-F. Hwang, A.~Malchiodi, and P.~Yang.
\newblock {Minimal surfaces in pseudohermitian geometry}.
\newblock {\em Ann. Sc. Norm. Super. Pisa Cl. Sci. (5)}, 4(1):129--177, 2005.

\bibitem{ChengFangYang}
J.-H. Cheng, J.-F. Hwang, and P.~Yang.
\newblock {Existence and uniqueness for {$p$}-area minimizers in the
  {H}eisenberg group}.
\newblock {\em Math. Ann.}, 337(2):253--293, 2007.

\bibitem{1902.04015}
G.~Citti, G.~Giovannardi, and M.~Ritor{\'e}.
\newblock {Variational formulas for curves of fixed degree}.
\newblock \href{https://arxiv.org/abs/1902.04015}{arXiv:1902.04015}, 11 Feb
  2019.

\bibitem{CS06}
G.~Citti and A.~Sarti.
\newblock {A cortical based model of perceptual completion in the
  roto-translation space}.
\newblock {\em J. Math. Imaging Vision}, 24(3):307--326, 2006.

\bibitem{cittilibro}
G.~Citti and A.~Sarti, editors.
\newblock {\em {Neuromathematics of vision}}.
\newblock {Lecture Notes in Morphogenesis}. Springer, Heidelberg, 2014.

\bibitem{citti}
G.~Citti and A.~Sarti.
\newblock {Models of the visual cortex in {L}ie groups}.
\newblock In {\em {Harmonic and geometric analysis}}, {Adv. Courses Math. CRM
  Barcelona}, pages 1--55. Birkh{\"a}user/Springer Basel AG, Basel, 2015.

\bibitem{DanielliGarofalo}
D.~Danielli, N.~Garofalo, and D.~M. Nhieu.
\newblock {Sub-{R}iemannian calculus on hypersurfaces in {C}arnot groups}.
\newblock {\em Adv. Math.}, 215(1):292--378, 2007.

\bibitem{DGN09}
D.~Danielli, N.~Garofalo, D.~M. Nhieu, and S.~D. Pauls.
\newblock {Instability of graphical strips and a positive answer to the
  {B}ernstein problem in the {H}eisenberg group {$\Bbb H^1$}}.
\newblock {\em J. Differential Geom.}, 81(2):251--295, 2009.

\bibitem{DobbinsZucker}
A.~Dobbins, S.~W. Zucker, and M.~S. Cynader.
\newblock {Endstopped neurons in the visual cortex as a substrate for
  calculating curvature}.
\newblock {\em Nature}, 329(6138):438--441, 1987.

\bibitem{GMTFe}
H.~Federer.
\newblock {\em Geometric measure theory}.
\newblock Die Grundlehren der mathematischen Wissenschaften, Band 153.
  Springer-Verlag New York Inc., New York, 1969.

\bibitem{FSS07}
B.~Franchi, R.~Serapioni, and F.~Serra~Cassano.
\newblock Regular submanifolds, graphs and area formula in {H}eisenberg groups.
\newblock {\em Adv. Math.}, 211(1):152--203, 2007.

\bibitem{Galli}
M.~Galli.
\newblock {First and second variation formulae for the sub-{R}iemannian area in
  three-dimensional pseudo-{H}ermitian manifolds}.
\newblock {\em Calc. Var. Partial Differential Equations}, 47(1-2):117--157,
  2013.

\bibitem{GRisoperegions}
M.~Galli and M.~Ritor{\'e}.
\newblock {Existence of isoperimetric regions in contact sub-{R}iemannian
  manifolds}.
\newblock {\em J. Math. Anal. Appl.}, 397(2):697--714, 2013.

\bibitem{GarofaloNh}
N.~Garofalo and D.-M. Nhieu.
\newblock {Isoperimetric and {S}obolev inequalities for
  {C}arnot-{C}arath{\'e}odory spaces and the existence of minimal surfaces}.
\newblock {\em Comm. Pure Appl. Math.}, 49(10):1081--1144, 1996.

\bibitem{thesis}
G.~Giovannardi.
\newblock {Geometric properties of 2-dimensional minimal surfaces in a
  sub-Riemannian manifold which models the Visual Cortex.}
\newblock Master's thesis, University of Bologna, Piazza di Porta S. Donato, 5,
  40126 Bologna BO, 2016.

\bibitem{MR4118581}
G.~Giovannardi.
\newblock Higher dimensional holonomy map for rules submanifolds in graded
  manifolds.
\newblock {\em Anal. Geom. Metr. Spaces}, 8(1):68--91, 2020.

\bibitem{phdthesis}
G.~Giovannardi.
\newblock {\em Variations for submanifolds of fixed degree}.
\newblock PhD thesis, University of Bologna and University of Granada, Piazza
  di Porta S. Donato, 5, 40126 Bologna BO (Italy) and Campus de Fuentenueva s/n
  E-18071 Granada (Spain), 2020.

\bibitem{MR684663}
P.~A. Griffiths.
\newblock {\em {Exterior differential systems and the calculus of variations}},
  volume~25 of {\em {Progress in Mathematics}}.
\newblock Birkh{\"a}user, Boston, Mass., 1983.

\bibitem{Gromov86}
M.~Gromov.
\newblock {\em Partial differential relations}, volume~9 of {\em Ergebnisse der
  Mathematik und ihrer Grenzgebiete (3) [Results in Mathematics and Related
  Areas (3)]}.
\newblock Springer-Verlag, Berlin, 1986.

\bibitem{Gromov}
M.~Gromov.
\newblock {Carnot-{C}arath{\'e}odory spaces seen from within}.
\newblock In {\em {Sub-{R}iemannian geometry}}, volume 144 of {\em {Progr.
  Math.}}, pages 79--323. Birkh{\"a}user, Basel, 1996.

\bibitem{Hartman}
P.~Hartman.
\newblock {\em {Ordinary differential equations}}, volume~38 of {\em {Classics
  in Applied Mathematics}}.
\newblock Society for Industrial and Applied Mathematics (SIAM), Philadelphia,
  PA, 2002.
\newblock Corrected reprint of the second (1982) edition [Birkh{\"a}user,
  Boston, MA; MR0658490 (83e:34002)], With a foreword by Peter Bates.

\bibitem{HladPauls}
R.~K. Hladky and S.~D. Pauls.
\newblock {Constant mean curvature surfaces in sub-{R}iemannian geometry}.
\newblock {\em J. Differential Geom.}, 79(1):111--139, 2008.

\bibitem{HP13}
R.~K. Hladky and S.~D. Pauls.
\newblock {Variation of perimeter measure in sub-{R}iemannian geometry}.
\newblock {\em Int. Electron. J. Geom.}, 6(1):8--40, 2013.

\bibitem{MR1189496}
L.~Hsu.
\newblock {Calculus of variations via the {G}riffiths formalism}.
\newblock {\em J. Differential Geom.}, 36(3):551--589, 1992.

\bibitem{HRR}
A.~Hurtado, M.~Ritor{\'e}, and C.~Rosales.
\newblock {The classification of complete stable area-stationary surfaces in
  the {H}eisenberg group {$\Bbb H^1$}}.
\newblock {\em Adv. Math.}, 224(2):561--600, 2010.

\bibitem{DonneMagnani}
E.~{Le Donne} and V.~Magnani.
\newblock {Measure of submanifolds in the {E}ngel group}.
\newblock {\em Rev. Mat. Iberoam.}, 26(1):333--346, 2010.

\bibitem{MTV}
V.~Magnani, J.~T. Tyson, and D.~Vittone.
\newblock {On transversal submanifolds and their measure}.
\newblock {\em J. Anal. Math.}, 125:319--351, 2015.

\bibitem{vittonemagnani}
V.~Magnani and D.~Vittone.
\newblock {An intrinsic measure for submanifolds in stratified groups}.
\newblock {\em Journal f{\"u}r die reine und angewandte Mathematik (Crelles
  Journal)}, 2008(619):203--232, 2008.

\bibitem{Montefalcone}
F.~Montefalcone.
\newblock {Hypersurfaces and variational formulas in sub-{R}iemannian {C}arnot
  groups}.
\newblock {\em J. Math. Pures Appl. (9)}, 87(5):453--494, 2007.

\bibitem{Mont94a}
R.~Montgomery.
\newblock Abnormal minimizers.
\newblock {\em SIAM J. Control Optim.}, 32(6):1605--1620, 1994.

\bibitem{Mont94b}
R.~Montgomery.
\newblock Singular extremals on {L}ie groups.
\newblock {\em Math. Control Signals Systems}, 7(3):217--234, 1994.

\bibitem{Montgomery}
R.~Montgomery.
\newblock {\em {A tour of subriemannian geometries, their geodesics and
  applications}}, volume~91 of {\em {Mathematical Surveys and Monographs}}.
\newblock American Mathematical Society, Providence, RI, 2002.

\bibitem{N56}
J.~Nash.
\newblock The imbedding problem for {R}iemannian manifolds.
\newblock {\em Ann. of Math. (2)}, 63:20--63, 1956.

\bibitem{Pansu93}
P.~Pansu.
\newblock Differential forms and connections adapted to a contact structure,
  after {M}. {R}umin.
\newblock In {\em Symplectic geometry}, volume 192 of {\em London Math. Soc.
  Lecture Note Ser.}, pages 183--195. Cambridge Univ. Press, Cambridge, 1993.

\bibitem{Pansu16}
P.~{Pansu}.
\newblock {Submanifolds and differential forms on Carnot manifolds, after M.
  Gromov and M. Rumin}.
\newblock {\em arXiv e-prints}, page arXiv:1604.06333, Apr 2016.

\bibitem{scottpauls}
S.~D. Pauls.
\newblock {{$H$}-minimal graphs of low regularity in {$\Bbb H^1$}}.
\newblock {\em Comment. Math. Helv.}, 81(2):337--381, 2006.

\bibitem{Petitot2014}
J.~Petitot.
\newblock {\em Landmarks for Neurogeometry}, pages 1--85.
\newblock Springer Berlin Heidelberg, Berlin, Heidelberg, 2014.

\bibitem{R09}
M.~Ritor{\'e}.
\newblock {Examples of area-minimizing surfaces in the sub-{R}iemannian
  {H}eisenberg group {$\Bbb H^1$} with low regularity}.
\newblock {\em Calc. Var. Partial Differential Equations}, 34(2):179--192,
  2009.

\bibitem{RitoreRosales}
M.~Ritor{\'e} and C.~Rosales.
\newblock {Rotationally invariant hypersurfaces with constant mean curvature in
  the {H}eisenberg group {$\mathbb{H}^n$}}.
\newblock {\em J. Geom. Anal.}, 16(4):703--720, 2006.

\bibitem{RitoreRosales1}
M.~Ritor{\'e} and C.~Rosales.
\newblock {Area-stationary and stable surfaces in the sub-{R}iemannian
  {H}eisenberg group {$\Bbb H^1$}}.
\newblock {\em Mat. Contemp.}, 35:185--203, 2008.

\bibitem{Rumin94}
M.~Rumin.
\newblock Formes diff\'{e}rentielles sur les vari\'{e}t\'{e}s de contact.
\newblock {\em J. Differential Geom.}, 39(2):281--330, 1994.

\bibitem{Nataliya}
N.~Shcherbakova.
\newblock {Minimal surfaces in sub-{R}iemannian manifolds and structure of
  their singular sets in the {$(2,3)$} case}.
\newblock {\em ESAIM Control Optim. Calc. Var.}, 15(4):839--862, 2009.

\end{thebibliography}

\end{document}